\newtheorem{teo}{Theorem}[section]
\newtheorem{lem}[teo]{Lemma}
\newtheorem{prop}[teo]{Proposition}
\newtheorem{cor}[teo]{Corollary}
\newtheorem{ques}[teo]{Question}
\declaretheoremstyle[
  spaceabove=\topsep, spacebelow=\topsep,
  headfont=\bf,  
  notefont=\mdseries, notebraces={(}{)},
  bodyfont=\rmfamily, 
  postheadspace=1em,
  qed=$\Diamond$
]{drem}
\declaretheorem[style=drem, name=Remark, numberlike=teo]{rmk}
\newcommand{\eg}[0]{\emph{e.g.} }
\newcommand{\ie}[0]{\emph{i.e.} }
\newcommand{\srl}[1]{\overline{#1}}
\newcommand{\jo}[1]{\mathcal{#1}}
\DeclareFontFamily{T1}{mafra}{}
\DeclareFontShape{T1}{mafra}{m}{n}{<->s*[0.95]yswab}{} 
\DeclareFontShape{T1}{mafra}{m}{it}{<->s*[1.0]ygoth}{} 
\DeclareTextFontCommand{\textgoth}{\yfrak}
\DeclareSymbolFont{mafrak}{T1}{mafra}{m}{n}
\DeclareSymbolFontAlphabet{\mathfr}{mafrak}
\DeclareSymbolFont{mbbold}{U}{bbold}{m}{n}
\DeclareSymbolFontAlphabet{\mathbbold}{mbbold}
\newcommand{\mr}[1]{\mathrm{#1}}
\newcommand{\mb}[1]{\mathbbold{#1}}
\newcommand{\mc}[1]{\mathcal{#1}}
\newcommand{\ms}[1]{\mathsf{#1}}
\newcommand{\mf}[1]{\mathfr{#1}}
\newcommand{\pgen}[1]{\langle #1 \rangle}
\newcommand{\surj}[0]{\twoheadrightarrow}
\newcommand{\eps}[0]{\varepsilon}
\newcommand{\rr}[0]{\ensuremath{\mathbb{R}}}
\newcommand{\zz}[0]{\ensuremath{\mathbb{Z}}}
\newcommand{\nn}[0]{\ensuremath{\mathbb{N}}}
\newcommand{\del}[0]{\ensuremath{\partial}}
\newcommand{\img}[0]{\mathrm{Im}\,}
\newcommand{\un}[0]{\mathbbold{1}}
\newcommand{\inff}[1]{\textrm{\raisebox{.5ex}{\mbox{$\underset{#1}{\inf}$}}} \:}
\newcommand{\comp}[0]{\mathsf{c}}
\newcommand{\perpud}[0]{{\, \textrm{\mbox{\raisebox{.7ex}{\:\begin{rotate}{180}\makebox(0,0){$\perp$}\end{rotate}}}} \,}}
\begin{document}

\renewcommand{\thefootnote}{\fnsymbol{footnote}}

\renewcommand{\thefootnote}{\arabic{footnote}}

\newcommand{\ud}{\tfrac{1}{2}}
\newcommand{\ut}{\tfrac{1}{3}}
\newcommand{\uq}{\tfrac{1}{4}}

\newcommand{\lmd}{\lambda}
\newcommand{\Lmd}{\Lambda}
\newcommand{\Gm}{\Gamma}
\newcommand{\gm}{\gamma}
\newcommand{\GM}{\Gamma}

\newcommand{\bsl}\backslash
\newcommand{\acts}{\curvearrowright}
\newcommand{\donc}{\rightsquigarrow}

\newcommand{\smdd}[4]{\big( \begin{smallmatrix}#1 & #2 \\ #3 & #4\end{smallmatrix} \big)}

\newcommand{\sgn}{\textrm{sgn}\,}

\begin{center}
\Large Cuts, flows and gradient conditions on harmonic functions.
\vspace*{1cm}

\centerline{\large Antoine Gournay\thanks{This work was supported by the ERC Consolidator Grant No. 681207, ``Groups, Dynamics, and Approximation''.}}
\end{center}

\vspace*{1cm}

\centerline{\textsc{Abstract}}

\begin{center}
\parbox{10cm}{{ \small 
Reduced cohomology motivates to look at harmonic functions which satisfy certain gradient conditions. If $G$ is a direct product of two infinite groups or a (FC-central)-by-cyclic group, then there are no harmonic functions with gradient in $c_0$ on its Cayley graphs. From this, it follows that a metabelian group $G$ has no harmonic functions with gradient in $\ell^p$.

\hspace*{.1ex} 
}}
\end{center}

\section{Introduction}\label{s-intro}

\newcommand{\hdc}[0]{t\!\mc{H}\!\ms{D}^c}
\newcommand{\hdp}[0]{t\!\mc{H}\!\ms{D}^p}
\newcommand{\hdq}[0]{t\!\mc{H}\!\ms{D}^q}


The subject matter of this paper is to investigate which graphs and Cayley graph of groups possess harmonic functions whose gradient (the difference of the values of the function at the end of the edges) belongs to $\ell^p$ or $c_0$. The motivation comes mainly from groups: for example when $p=\infty$, one gets the class of Lipschitz harmonic functions which is of known importance, \eg see Shalom \& Tao \cite{ST}. 

Furthermore, if a Hilbertian representation of the group has non trivial reduced cohomology in degree 1, then there exist a non-constant harmonic function on the Cayley graph.
The gradient of this harmonic function is related to the mixing property of this representation. 
For example, for a strongly mixing representation this yields (in any Cayley graph) a harmonic functions with gradient in $c_0$. 
Hence if a group has no harmonic function with gradient in $c_0$ in some Cayley graph, then the reduced cohomology in degree 1 of any strongly mixing representation is trivial; see \cite[\S{}2]{GJ} or \cite[\S{}3]{Go-mixing} for details and references. 

Lastly, let us mention the reduced $\ell^p$-cohomology in degree 1 (of a group or graph), an useful quasi-isometry invariant. 
Under some assumptions on the isoperimetry, the non-vanishing of this cohomology is equivalent to the presence of harmonic function with gradient in $\ell^p$; see \cite{Go-frontier} for details and references. An underlying question due to Gromov \cite[\S{}8.$A_1$.$A_2$, p.226]{Gro} is whether any amenable group has harmonic function with gradient in $\ell^p$.

For Cayley graphs of groups, the main results are:
\begin{teo}[see Theorem \ref{tc0abcy} and Corollary \ref{tlpmetab-c}]\label{teo1}
Let $G$ be a finitely generated group which is a cyclic extension of a group with an infinite FC-centraliser.
Then (on any Cayley graph of $G$) any harmonic function with gradient in $c_0$ is constant.

Consequently, on any Cayley graph of a metabelian group there are no harmonic function with gradient in $\ell^p$.
\end{teo}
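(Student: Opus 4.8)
The plan is to reduce the second assertion to the first. Since $\ell^p\subseteq c_0$ for $p<\infty$, a harmonic function whose gradient lies in $\ell^p$ has gradient in $c_0$, so it suffices to prove that on any Cayley graph of a finitely generated metabelian group $G$ every harmonic function with gradient in $c_0$ is constant. Recall the structure: $A:=[G,G]$ is abelian and normal, $Q:=G^{\mr{ab}}=G/A\cong\zz^{d}\times T$ with $T$ finite, and by a theorem of P.\ Hall $A$ is a finitely generated, hence Noetherian, module over $\zz[Q]$. I would argue by induction on the torsion-free rank $d$ (equivalently on the Hirsch length), at each stage trying to exhibit a normal subgroup $N\trianglelefteq G$ with $G/N$ cyclic to which Theorem \ref{tc0abcy} applies.

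The low-rank cases are immediate. If $Q$ is finite, then $A$ is finitely generated abelian and $G$ is virtually abelian; taking $N$ to be the kernel of a cyclic quotient of $G^{\mr{ab}}$ yields a finite-index subgroup containing a finite-index copy of $\zz^{k}$, every element of which has finite conjugacy class in $N$, so $N$ has infinite FC-centre while $G/N$ is cyclic. If $d=1$, a surjection $G\surj\zz$ has kernel $N\supseteq A$ with $N/A$ the (finite) torsion subgroup of $Q$; the finite group $N/A$ acts with finite orbits on $A$, whence $A$ lies in the FC-centre of $N$ and $G/N\cong\zz$ is cyclic. In both cases Theorem \ref{tc0abcy} finishes the argument.

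The whole difficulty sits in the step $d\ge 2$, and this is the main obstacle. Here the kernel of a homomorphism $G\surj\zz$ need not have infinite FC-centre: already for $G=\zz\wr\zz^{2}=A\rtimes\zz^{2}$ with $A=\zz[\zz^{2}]$, the faithful and expansive action of $\zz^{2}$ on the integral domain $A$ forces the FC-centre of $N$ to be finite for \emph{every} $N$ with $G/N$ cyclic, so $G$ is neither a direct product of two infinite groups nor (infinite-FC)-by-cyclic, and no single building block applies to $G$ itself. I would circumvent this by peeling off one cyclic quotient at a time: since $Q$ is finitely generated abelian there is a finite normal series $G=N_{0}\rhd N_{1}\rhd\cdots\rhd N_{m}=A$ with each quotient $N_{i-1}/N_{i}$ cyclic and with abelian base $A$, for which the absence of nonconstant harmonic functions with gradient in $c_0$ follows from the result on direct products of two infinite groups (when the rank is $\ge 2$) and from the elementary computation on $\zz$ otherwise. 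The crux is then the ascent: propagating the nonexistence property up this tower of cyclic extensions — equivalently, showing that any harmonic function with gradient in $c_0$ on $G$ is invariant under $[G,G]$ and hence descends to the abelianisation. The delicate point I expect to fight is that the intermediate terms $N_{i}$ need not be finitely generated (e.g.\ $N_{1}=A\rtimes\zz$ above is not), so the ascent must be run directly on the locally finite Cayley graph of the ambient group $G$, using the FC-central behaviour supplied stage by stage by Theorem \ref{tc0abcy}, rather than by applying it to each $N_{i}$ in turn.
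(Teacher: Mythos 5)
Your opening reduction is where the argument breaks down. It is true that $\ell^pE\subseteq c_0E$, so the second assertion would follow if every finitely generated metabelian group had $\hdc$; but that stronger statement is false. As recalled at the end of \S{}\ref{sscentmal}, Amir, Gerasimova and Kozma \cite{AGK} show that the lamplighter $C_2\wr\zz^5$ --- a finitely generated metabelian group --- carries non-constant harmonic functions with gradient in $c_0$. So no amount of care in the step you flag as the crux (peeling off cyclic quotients and propagating $\hdc$ up the tower) can succeed: the target of your reduction is simply not a theorem. Your discussion of $\zz\wr\zz^2$ correctly identifies that the kernel of every cyclic quotient of such a group has finite FC-centre, so that no single building block applies; the conclusion to draw is not that a cleverer tower is needed, but that the $c_0$ statement must be abandoned for general metabelian groups and the $\ell^p$ hypothesis used in an essential way.

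The paper's proof of Corollary \ref{tlpmetab-c} instead passes through subgroups rather than quotients. If $G$ is virtually nilpotent, Theorem \ref{tinfcen-t} concludes. Otherwise $G$ contains an abelian-by-cyclic subgroup $G_0$ of exponential growth (Groves \cite{Gr}, Breuillard \cite{Br}); Theorem \ref{tc0abcy} gives $\hdc$, hence $\hdp$, for $G_0$, and since $G_0$ has $\mathrm{IS}_d$ for every $d$ this translates into the vanishing of its reduced $\ell^p$-cohomology. One then checks, writing $1\to A_1\to G\to A_2\to 1$ for a metabelian decomposition, that iterated q-normalisers of $G_0$ reach first $\langle G_0,A_1\rangle$ and then all of $G$, so $G$ is the almost-malnormal hull of $G_0$; the propagation theorem \cite[Theorem 5.12]{Go-mixing} then transports the vanishing of reduced $\ell^p$-cohomology from $G_0$ to $G$, which yields $\hdp$ for $G$. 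Both the isoperimetric input and the malnormal-hull propagation are genuinely $\ell^p$ phenomena with no $c_0$ counterpart strong enough for your purposes, consistently with the failure of $\hdc$ for $C_2\wr\zz^5$.
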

Note that any virtually nilpotent group has an infinite FC-centraliser (the FC-centraliser is the subgroup of elements with a finite conjugacy class; more see \S{}\ref{sscentmal}). The previous result is akin to a result of Brieussel \& Zheng \cite[Theorem 1.1]{BZ}. 

The current technique sometime only apply to a restricted generating set:
\begin{teo}[see Theorem \ref{tsubcent}]\label{teo2}
Let $G$ be a group and $H$ an infinite subgroup so that $G$ is generated by $H$ and its FC-centraliser $Z:= Z^{FC}_G(H)$. Then there is a Cayley graph of $G$ which has no non-constant harmonic function with gradient in $c_0$.
\end{teo}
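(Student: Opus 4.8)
The plan is to choose a convenient generating set and then to run, for a putative harmonic function $f$ with $\nabla f\in c_0$, the same basic dichotomy used for Theorem~\ref{teo1}: either a suitable companion of $\nabla f$ is an honestly harmonic function lying in $c_0$ (and is therefore zero), or the current $\nabla f$ carries no flux to infinity in enough directions to force $f$ to be constant. Fix a symmetric generating set $S=S_H\sqcup S_Z$ with $S_H\subseteq H$ generating $H$ and $S_Z\subseteq Z$ finite, arranged so that $\langle S\rangle=G$, and work with the associated simple random walk with law $\mu$. Two elementary facts are used throughout. First, a harmonic function that lies in $c_0$ as a function is identically zero: it attains its supremum (it is small off a finite set), and the strong maximum principle for the irreducible walk forces it to be constant. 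Second, for any fixed $w\in G$ the difference $x\mapsto f(xw)-f(x)$ lies in $c_0$, since it telescopes along an edge path of length $|w|_S$ whose vertices all escape to infinity together with $x$, each summand being a coordinate of $\nabla f$.

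The obstruction to an immediate proof is that the elements of $Z$ are only FC-central relative to $H$, not central in $G$. If $z\in S_Z$ were central, then $g:=\nabla_z f=f(\cdot\,z)-f(\cdot)$ would be harmonic and in $c_0$, hence zero by the first fact. In general a direct computation gives $\Delta g(x)=\sum_{s\in S}\mu(s)\bigl(f(xsz)-f(xzs)\bigr)$, and every term indexed by an $s$ that commutes with $z$ vanishes. This is exactly where the hypothesis enters: since $z\in Z=Z^{FC}_G(H)$, its centraliser $C_H(z)$ has finite index in the infinite group $H$, so $z$ commutes with the infinite subgroup $K:=C_H(z)\le H$, and the Laplacian of $g$ is a $c_0$ combination of gradients concentrated on the finitely many generators that fail to commute with $z$.

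From here the strategy is to promote ``$g\in c_0$ and $\Delta g$ small off the $K$-directions'' to $g\equiv 0$ by a cut and flow argument, which is the mechanism of the paper. The divergence-free current $\nabla f$ has, across any two-sided cut with finite edge boundary, a flux equal to its net flow to infinity; the infinite commuting subgroup $K$ supplies an exhausting family of such cuts transverse to the $z$-direction whose boundaries stay bounded once $S$ is chosen suitably, and since $\nabla f\in c_0$ the flux along this family tends to $0$ while remaining constant by source-freeness, hence is $0$. Running this in the directions furnished by $K$ (for the $S_Z$-generators) and by $H$ itself (for the $S_H$-generators) shows that $f$ has no net drift in any generating direction; combined with the vanishing of $\nabla f$ at infinity, a Liouville-type conclusion then forces $f$ to be constant. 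The requirement that the transverse cuts have bounded boundary is precisely what pins down a \emph{restricted} generating set, and is why the statement claims only the existence of \emph{some} Cayley graph.

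The main obstacle is the passage described in the third paragraph: turning the infinitesimal information ``$\Delta(\nabla_z f)$ vanishes along the infinite commuting subgroup $C_H(z)$ and $\nabla_z f\in c_0$'' into the global conclusion $\nabla_z f\equiv 0$. This is where the infinitude of $H$ is indispensable and where the cut and flow estimates must be made quantitative; by contrast the reductions and the two elementary facts are routine, and once the flux vanishing is established the maximum principle finishes the argument exactly as in the central case.
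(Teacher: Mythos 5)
Your opening reductions are sound and in the spirit of the paper: the function $g_z:=f(\cdot\,z)-f(\cdot)$ does lie in $c_0V$, the maximum principle does kill $c_0$ harmonic functions, and the non-vanishing terms of $\Delta g_z$ are indeed indexed by generators failing to commute with $z$. But the step you yourself flag as ``the main obstacle'' is a genuine gap, and the mechanism you sketch for it cannot work as stated. First, the conjunction ``$g_z\in c_0V$ and $\Delta g_z\in c_0V$'' is vacuous: $\Delta$ maps $c_0V$ into $c_0V$ (each translate of a $c_0$ function is again $c_0$), so the second condition follows from the first for \emph{every} $c_0$ function, and these are certainly not all zero. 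What must be exploited is the exact algebraic form of $\Delta g_z$, not its decay. Second, the flux-across-cuts argument is not available: in a one-ended Cayley graph there is no two-sided cut with finite edge boundary separating two infinite pieces, and for a cut whose inner side is finite the flux of the divergence-free field $\nabla f$ vanishes identically by harmonicity and carries no information. The paper's actual mechanism is a transport-plan computation: one writes $f(x)-f(xz)=\pgen{f\mid \delta_x*\mu_H^n-\delta_{xz}*\mu_H^n}$ for a symmetric measure $\mu_H$ whose support generates an infinite subgroup of $H$, uses $z\in Z^{FC}_G(H)$ to produce transport patterns of uniformly bounded $\ell^1$-cost tending pointwise to $0$, and pairs them against $\nabla f\in c_0E$ (Lemma \ref{tconcen-l}). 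For this one needs $f$ to be harmonic \emph{with respect to} $\mu_H$, which is not automatic. That is precisely what the restricted generating set buys: the paper takes $S$ to be a union of full $H$-conjugacy classes of elements of $Z$ together with elements of $H$, so that the walk operator $P$ commutes with an auxiliary measure $\mu$ supported on $H$, whence $P$-harmonic implies $\mu$-harmonic. Your explanation of why a special generating set is needed (``bounded boundaries of transverse cuts'') misidentifies this point.

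The second gap is that your outline has no mechanism for the $H$-directions. Establishing that $f$ is constant on right cosets of $Z$ (or of $\pgen{S_Z}$) does not by itself control $f(xh)-f(x)$ for $h\in H$: a generic element of $H$ has no infinite centralising subgroup to play the role of $K$, so ``running this in the directions furnished by $H$ itself'' has no content. The paper closes this with an entirely separate ingredient, Lemma \ref{tconmar-l}: constancy on the cosets of $Z$ propagates to the q-normaliser $N^q_G(Z)$, and by iteration to the almost-malnormal hull of $Z$, which contains $\pgen{H,Z}=G$ because $H$ normalises the infinite group $Z$ and hence $H\subset N^q_G(Z)$. That propagation uses the infinitude of $gZ\cap Zg$ together with $\nabla f\in c_0E$ along paths of bounded length, and it cannot be replaced by a Liouville-type appeal to the vanishing of $\nabla f$ at infinity.
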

Note that if $Z \cap H$ is infinite, then the FC-centraliser of $G$ itself is infinite. 
In that case \cite[Lemma 2.7]{GJ} could already yield the conclusion; see also Corollary \ref{tzinf1-c}.

Direct products of two infinite groups satisfy the hypothesis of Theorem \ref{teo2}. For such groups, the absence of harmonic functions with gradient in $c_0$ can be established in any Cayley graph; see Corollary \ref{tdirc0gp-c}. The results mention so far rely on the concepts of almost-malnormal subgroups and quasi-normalisers; in that sense, they build up on some techniques from \cite{Go-mixing}. 

The article is organised as follows. All of the questions are gathered in  \S{}\ref{sques}.
\S{}\ref{sharmetcoho} introduces the space of cuts and flows which are used to study the space of harmonic function with gradient conditions. 
One of the aim of this section is to give a dual characterisation of having no non-constant harmonic function with gradient in $\ell^p$ (or $c_0$). 
This dual characterisation will allow to reduce the non-existence of harmonic functions to the existence of transport plans to deal with other cases.
\S{}\ref{ssliou} introduces the idea of exit distributions which also come in the construction of some transport plans and \S{}\ref{ssisop} defines isoperimetric profiles which are a key condition for some results.
\S{}\ref{shdp} contains the bulk of the proofs. 
\S{}\ref{sslioudir} defines transport plans and use them to give alternate proofs of results from \cite{Go}, namely that graph without bounded harmonic functions also have no [potentially unbounded] harmonic function with gradient in $\ell^p$, provided some isoperimetric inequality holds; this uses the exit distributions from \S{}\ref{ssliou}.
\S{}\ref{ss-c0centr} reproves \cite[Lemma 2.7]{GJ} using slightly different techniques. 
\S{}\ref{sscentmal} hinges on the idea of q-normalisers to show that the constancy of a harmonic function with gradient in $c_0$ propagates to malnormal hulls. This eventually lead to the proof of Theorems \ref{teo1} and \ref{teo2}.

{\it Acknowledgments:} The author learned about Theorem \ref{tliouex-t} through an answer V.~Beffara gave on MathOverflow \cite{Beff-MO} ans was given the correct reference thanks to V.~Kaimanovich.

\section{Harmonic functions and reduced $\ell^p$-cohomology}\label{sharmetcoho}

\subsection{Cuts and flows}\label{sscutflow}

The aim of this section is to define the subspaces of $\ell^pE$ and $c_0E$ which are of interest here (cuts and flows) and relate them to harmonic functions.

For convenience, the edges will be seen as a subset of $V \times V$. However to avoid dealing with ``alternate'' functions\footnote{When both $(x,y)$ and $(y,x)$ are allowed as edges, the gradient of a function is alternated in the sense it satisfies $f(x,y) = -f(y,x)$. Since alternated functions in $\ell^pE$ are complemented, this is not a serious difficulty, simply a convention.}, 
only one of the pair $(x,y)$ or $(y,x)$ will be allowed. So $x,y \in V$ are neighbours if either $(x,y) \in E$ or $(y,x) \in E$; but $(x,y)$ and $(y,x)$ may not both belong to $E$.

That said, given a function on the vertices $f:V \to \rr$, $\nabla f$ is defined as $\nabla f(x,y) = f(y)-f(x)$ (whenever $(x,y) \in E$). On any countable set $X$, one can define the inner product of finitely supported functions $f,g:X \to \rr$ by
\[
\pgen{f \mid g} = \sum_{x \in X} f(x) g(x).
\]
This extends to larger spaces, \eg $f \in \ell^pX$ and $g \in \ell^{p'}X$. The adjoint of $\nabla$ for this product is $\nabla^*$. It is defined (as usual) by $\pgen{ \nabla^* f \mid g} = \pgen{f \mid \nabla g}$ where $f:E \to \rr$ and $g: V \to \rr$. This gives
\[
\nabla^* f(x) = - \sum_{ (x,y) \in E} f(x,y) + \sum_{ (y,x) \in E} f(y,x).
\]
A function $f:V \to \rr$ is harmonic if $\nabla^* \nabla f \equiv 0$. 

The standing hypothesis that the graph is connected is important in order that the only function with trivial gradient are constant functions. 
The [other] standing hypothesis that the graph has bounded valency is crucial in order for the gradient to be a bounded operator (from $\ell^pV \to \ell^pE$). 
Note that the identity $\pgen{ \nabla^* f \mid g} = \pgen{f \mid \nabla g}$ holds if $f \in \ell^p E$ and $g \in \ell^{p'}V$ (where $p'$ is the H\"older conjugate of $p$ and $\ell^\infty$ can be replaced by $c_0$).

The notation $\ell^0X$ will be used to speak of the finitely supported functions on $X$. The space $c_0X$ is the completion of $\ell^0X$ with respect to the $\ell^\infty$-norm. A function $f$ belongs to $c_0X$ if there is an increasing sequence of finite sets $X_n$ such that $\cup X_n = X$ and $\|f\|_{\ell^\infty (X_n^\comp)} \to 0$. In pagan words, ``it decreases to $0$ at infinity''.

One of the spaces that will be used are the spaces of cuts. Roughly said, these are the image of the gradient. Intuitively, this is because the gradient of the characteristic function of $A$ is the cut $\del A$ (the edges between $A$ and its complement $A^\comp$). However, there are \emph{a priori} two such subspaces: the first are closures of $\nabla  \ell^0V$. So define, for $1 \leq p < \infty$,
\[
\mb{k}_\infty = \srl{ \nabla \ell^\infty V}^{\ell^\infty E} \quad , \quad  \mb{k}_c = \srl{ \nabla \ell^0V}^{c_0 E} \quad\text{and}\quad \mb{k}_p = \srl{ \nabla \ell^0V}^{\ell^p E}.
\]
Note that because $\nabla$ is bounded and $\ell^0V$ is dense in $\ell^pV$, $\srl{ \nabla \ell^0V}^{\ell^p E} = \srl{ \nabla \ell^pV}^{\ell^p E}$ (the same holds for $c_0$ but not $\ell^\infty$). On the other, one could also consider all the gradients which are in $c_0(E)$ or $\ell^pE$: define, for $1 \leq p \leq \infty$,
\[
\mb{K}_c = \nabla \rr^V \cap c_0 E  \text{ and } \mb{K}_p = \nabla \rr^V \cap \ell^p E
\]
The difference between these spaces is the reduced $c_0$ or $\ell^p$-cohomology in degree one. 

Next, comes the space of flows. There are again two possibilities. Let $C$ be the set of [finite oriented] cycles in the graph. As in homology, one can define on each cycle the boundary operator, here denoted by $\nabla_2^*$. Then, a first possibility for the space of flows is to consider\footnote{In this generality, it is not clear to the author what $\mb{f}_\infty$ should be}: for $1 \leq p < \infty$,
\[
\mb{f}_c = \srl{ \nabla_2^* \ell^0C}^{c_0 E} \text{ and } \mb{f}_p = \srl{ \nabla_2^* \ell^0C}^{\ell^pE}.
\]
The second possibility is, for $1 \leq p \leq \infty$.
\[
\mb{F}_c = \ker \nabla^* \subseteq c_0 E \text{ and } \mb{F}_p = \ker \nabla^* \subseteq \ell^p E.
\]
\vspace*{-2em}
\begin{rmk}\label{rnabla2}
Sometimes there is a good basis for $C$. Namely, there is a collection of cycles $C'$ so that every cycle can be decomposed as a sum of cycles in $C'$ and there is a uniform bound on the length of the cycles in $C'$. This property implies that a given edge belongs to finitely many element of $C'$. When looking at the Cayley graph of a group, this property coincides with the fact that the group is finitely presented. 

In those cases, $\nabla_2$ is a bounded operator from $\ell^p C'$ to $\ell^p E$. It is then relatively straightforward to check that $\mb{K}_p = \ker \nabla_2 \subseteq \ell^p E$ (and likewise for $c_0$). There is also a natural candidate for $\mb{f}_\infty$, namely $\srl{\nabla_2 \ell^\infty C'}^{\ell^\infty E}$.

On the other hand, $\nabla_2 \rr^{C'} \cap \ell^pE \neq \mb{F}_p$ (and likewise for $c_0$). The simplest case of a graph where this is an inequality happens when there are no cycles (\ie a tree, \eg the Cayley graphs of a free group with respect to a free generating set). So this might be seen as a third possibility for a space of cycles. See Question \ref{qnabla2}.
\end{rmk}

Recall that the space of harmonic functions with gradient in $c_0$ can be identified, modulo the constant functions, with $\mb{F}_c \cap \mb{K}_c$ (by looking at the gradient of these functions). Indeed, since the graph is connected, one can always ``integrate'' a gradient back to a function, up to a constant function. Hence, an element of $\mb{K}_c$ is (as a function) harmonic if and only if it lies also in $\mb{F}_c$. This ``proves'' (one can essentially take this as a definition):
\begin{lem}\label{tdefharm-l}
There are no non-constant harmonic functions with gradient in $c_0$
if and only if $\mb{F}_c \cap \mb{K}_c = \{0\}$. 
\end{lem}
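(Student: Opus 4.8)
The plan is to make precise the informal identification given in the paragraph preceding the lemma by unwinding the definitions in both directions. The only structural input is the standing connectedness hypothesis, which guarantees that $\ker \nabla$ (acting on $\rr^V$) consists exactly of the constant functions; equivalently, the linear map $f \mapsto \nabla f$ is injective on $\rr^V$ modulo constants. Before starting, I would record that $\nabla^*$ is defined pointwise on all of $\rr^E$ (both sums in its formula are finite thanks to the bounded-valency hypothesis), so that $\nabla^* \phi = 0$ is a meaningful pointwise condition for any $\phi : E \to \rr$; in particular $\nabla^* \nabla f$ makes sense for every $f$, and harmonicity of $f$ is by definition the condition $\nabla^* \nabla f \equiv 0$.

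For the direction $(\imp)$, I would assume there is no non-constant harmonic function with gradient in $c_0$ and take $\phi \in \mb{F}_c \cap \mb{K}_c$. Since $\phi \in \mb{K}_c = \nabla \rr^V \cap c_0 E$, choose $f : V \to \rr$ with $\nabla f = \phi$; since $\phi \in \mb{F}_c = \ker \nabla^* \cap c_0 E$, one has $\nabla^* \nabla f = \nabla^* \phi = 0$, so $f$ is harmonic with gradient $\phi \in c_0 E$. By hypothesis $f$ is constant, hence $\phi = \nabla f = 0$, giving $\mb{F}_c \cap \mb{K}_c = \{0\}$. For the converse $(\Leftarrow)$, I would assume $\mb{F}_c \cap \mb{K}_c = \{0\}$ and let $f$ be harmonic with $\nabla f \in c_0 E$. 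Setting $\phi := \nabla f$, one has $\phi \in \nabla \rr^V \cap c_0 E = \mb{K}_c$, and $\nabla^* \phi = \nabla^* \nabla f = 0$ with $\phi \in c_0 E$, so $\phi \in \ker \nabla^* \cap c_0 E = \mb{F}_c$. Therefore $\phi \in \mb{F}_c \cap \mb{K}_c = \{0\}$, i.e. $\nabla f \equiv 0$, and connectedness forces $f$ to be constant.

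There is essentially no obstacle here: the content is a definition-unwinding, and the statement is, as the author notes, almost a reformulation. The one point deserving a word of care is that the correspondence between functions and their gradients is well-defined only up to constants, which is exactly why connectedness is invoked and why the identification is stated \emph{modulo the constant functions}. I would also stress explicitly that both $\mb{F}_c$ and $\mb{K}_c$ are taken inside $c_0 E$, so that their intersection automatically enforces the $c_0$ gradient condition, while harmonicity is encoded precisely by membership in $\mb{F}_c = \ker \nabla^*$; these two observations are what let the single intersection $\mb{F}_c \cap \mb{K}_c$ capture simultaneously all three requirements (gradient, in $c_0$, harmonic).
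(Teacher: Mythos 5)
Your proposal is correct and follows exactly the route the paper takes: the paper also treats the lemma as a definition-unwinding, identifying harmonic functions with $c_0$ gradient (modulo constants) with $\mb{F}_c \cap \mb{K}_c$ by integrating gradients back using connectedness and encoding harmonicity as membership in $\ker\nabla^*$. Your write-up is merely a more explicit version of the paper's one-paragraph argument.
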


Likewise, the space of Lipschitz harmonic function is $\mb{F}_\infty \cap \mb{K}_\infty$. In a group, it is never trivial by Shalom \& Tao \cite{ST}.

\subsection{Some reductions \emph{via} reduced cohomology in degree $1$}\label{sscoho}

The difference between $\mb{K}_1$ and $\mb{k}_1$ is called the reduced $\ell_1$-cohomology in degree $1$ (likewise for $c_0$). It is a well-known fact that the dimension of the quotient vector space $\mb{K}_1/\mb{k}_1$ is the number of ends of the graph minus $1$ (see Proposition A.2 in \cite{Go}). This statement is given a number here for further uses:
\begin{cor}\label{toneend-c}
Assume $G$ is infinite. Then $G$ has one end if and only if $\mb{K}_1 = \mb{k}_1$. 
\end{cor}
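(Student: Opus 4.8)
The plan is to read off the statement from the dimension count recalled immediately before it, so the essential work is a numerical reduction. The fact to invoke is that $\dim(\mb{K}_1/\mb{k}_1)$ equals the number of ends of $G$ minus one (Proposition A.2 in \cite{Go}). Since $G$ is infinite and of bounded valency, removing any finite set of vertices leaves finitely many connected components, at least one of which is infinite; hence $G$ has at least one end and the integer $(\#\mathrm{ends})-1$ is non-negative. Now $\mb{K}_1=\mb{k}_1$ holds exactly when the quotient $\mb{K}_1/\mb{k}_1$ is trivial, i.e. when its dimension is $0$, i.e. when $(\#\mathrm{ends})-1=0$; and this is precisely the condition that $G$ has one end. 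Both implications follow at once.

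To make the argument self-contained I would also indicate why the dimension count holds. The key observation is that a gradient $\nabla f\in\ell^1 E$ has finite total variation, so $f$ converges along every ray to infinity and therefore admits a well-defined limit at each end of $G$. Recording these limits gives a linear end-evaluation map from $\mb{K}_1$ to $\rr^{\#\mathrm{ends}}$; because the graph is connected, $f$ is determined by $\nabla f$ only up to an additive constant, so this map is canonically valued in the quotient $\rr^{\#\mathrm{ends}}/\rr\1$. Surjectivity onto that quotient is easy: a function that is a prescribed constant on each infinite component outside a large finite ball has finitely supported gradient, hence lies in $\mb{K}_1$, and its end-limits realise any prescribed vector. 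One inclusion of the kernel is likewise direct: the gradient of a finitely supported (and so of any $\ell^1$) function has limit $0$ at every end, and the end-evaluation functionals are continuous for the $\ell^1$-norm on gradients, so $\mb{k}_1$ sits inside the kernel.

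The main obstacle is the reverse inclusion, namely that the kernel of the end-evaluation map is contained in $\mb{k}_1$: given $\nabla f\in\ell^1 E$ whose limit vanishes at each end, one must approximate $\nabla f$ in $\ell^1 E$ by gradients of finitely supported functions. I would treat this by a truncation argument, cutting $f$ off outside larger and larger balls and tapering it through the (finitely many) end-components down to $0$, using that the $\ell^1$-tail of $\nabla f$ vanishes and that $f$ itself tends to $0$ along each end in order to control the error created on the cut edges. Since this approximation is exactly what the cited Proposition A.2 supplies, in the final write-up I would appeal to it directly rather than reproduce the estimate, and then conclude with the one-line numerical reduction of the first paragraph.
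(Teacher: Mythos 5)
Your first paragraph is exactly the paper's argument: Corollary \ref{toneend-c} is stated there as an immediate numerical consequence of the cited fact that $\dim(\mb{K}_1/\mb{k}_1)$ equals the number of ends minus one (Proposition A.2 of \cite{Go}), and the paper offers no further proof. The additional sketch of the end-evaluation map is a reasonable outline of why that cited fact holds, but since you correctly defer to the reference for the truncation estimate, your final write-up coincides with the paper's.
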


Equalities in the dual and pre-dual are easier to check:
\begin{prop}\label{tc0triv-p}
$\mb{k}_c = \mb{K}_c$.
\end{prop}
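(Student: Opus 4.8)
The plan is to prove the two inclusions $\mb{k}_c \subseteq \mb{K}_c$ and $\mb{K}_c \subseteq \mb{k}_c$ separately; the first is soft, and all the content sits in the second.

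For $\mb{k}_c \subseteq \mb{K}_c$, I would observe that $\nabla \ell^0 V \subseteq \nabla \rr^V \cap c_0 E = \mb{K}_c$, since a finitely supported function has finitely supported (hence $c_0$) gradient, and then it is enough to check that $\mb{K}_c$ is closed in $c_0 E$. This I would do by a circulation argument: if $\nabla f_n \to g$ uniformly, then for each finite oriented cycle the alternating sum of $\nabla f_n$ over its edges vanishes and, being a finite sum, passes to the limit, so $g$ has vanishing circulation around every cycle. Connectedness then lets one integrate $g$ along paths from a basepoint $o$ to recover an $f$ with $\nabla f = g$, so $g \in \mb{K}_c$, and the closedness follows.

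For the reverse inclusion I would fix $g = \nabla f \in \mb{K}_c$, normalise $f(o) = 0$, fix $\eps > 0$, and aim to construct a finitely supported $h$ with $\|\nabla h - g\|_{\ell^\infty E} \le 3\eps$. The naive choice $h = f \cdot \1_{B_R}$ (truncation to the ball $B_R$, finite by bounded valency) fails: across the boundary sphere it creates an error of size $|f|$, and $f$ may well be unbounded even though $\nabla f \in c_0 E$ --- e.g. $f(n) = \sqrt{|n|}$ on $\zz$. The fix I propose is to taper $f$ to $0$ \emph{slowly} over a wide annulus. Choosing $R$ so that $|g(e)| < \eps$ for every edge $e$ with an endpoint outside $B_R$ (possible since $\{|g| \ge \eps\}$ is a finite edge set), writing $G_0 = \|g\|_{\ell^\infty E}$, fixing a width $M$, and setting $h(x) = \psi(d(o,x)) f(x)$ with $\psi$ equal to $1$ on $[0,R]$, linear down to $0$ on $[R,R+M]$ and $0$ beyond, yields an $h$ supported in the finite ball $B_{R+M}$.

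The estimate would then rest on the Leibniz identity $\nabla h(x,y) - g(x,y) = (\psi(a)-1)\,g(x,y) + f(y)\,(\psi(b)-\psi(a))$, where $a = d(o,x)$ and $b = d(o,y)$. Inside $B_R$ both terms vanish; beyond $B_{R+M}$ the error is just $-g$, of size $< \eps$; in the annulus the first term is at most $\eps$ while the second is at most $|f(y)|/M$, since $\psi$ is $\tfrac{1}{M}$-Lipschitz in the distance argument. The crux --- and the point that forces the construction --- is to bound $|f(y)|$ for $y$ at distance $\le R+M$: integrating $g$ along a geodesic from $o$ gives $|f(y)| \le R\,G_0 + M\eps$, because at most $R$ edges of the geodesic can carry a large value of $g$ and the remaining ones satisfy $|g| < \eps$. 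Taking $M \ge R\,G_0/\eps$ then bounds the annulus error by $3\eps$, so $\|\nabla h - g\|_{\ell^\infty E} \le 3\eps$ with $h$ finitely supported, giving $g \in \mb{k}_c$. The main obstacle throughout is exactly this possible unboundedness of the primitive $f$, which rules out a sharp cutoff and dictates the quantitative trade-off between the annulus width $M$ and the linear-in-$R$ growth bound on $|f|$ coming from the $c_0$-smallness of $g$ far from $o$.
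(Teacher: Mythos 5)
Your proof is correct, and the heart of it --- the linear taper of $f$ over a wide annulus followed by a $3\eps$ estimate split into the cases ``inside $B_R$'', ``in the annulus'', ``outside the support'' --- is exactly the construction in the paper's proof. Where you genuinely diverge is in how the possible unboundedness of the primitive $f$ is handled. The paper first invokes the truncation Lemma \ref{tholoso-l} to reduce to $f \in \ell^\infty V$, and then calibrates the annulus width as $\|f\|_{\ell^\infty}/\eps$; you instead keep $f$ as is and replace the $\ell^\infty$ bound by the a priori estimate $|f(y)| \le R\,G_0 + M\eps$ on $B_{R+M}$, obtained by integrating $g$ along a geodesic and using that only the first $R$ edges can carry a value $\ge \eps$. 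Your route is self-contained and avoids the truncation lemma entirely; the paper's route factors the difficulty through Lemma \ref{tholoso-l}, which it needs anyway for Proposition \ref{tkinf-p}, so nothing is wasted there. You also spell out the easy inclusion $\mb{k}_c \subseteq \mb{K}_c$ via closedness of $\mb{K}_c$ under uniform limits (zero circulation passes to the limit), which the paper leaves implicit; that argument is fine. One small point worth recording in a final write-up: for an edge straddling the outer boundary of the annulus, the two distances $a,b$ differ by at most $1$, so either both lie in $[R+M,\infty)$ and $\psi$ vanishes on both, or both lie in $[0,R+M]$ where your growth bound on $|f|$ applies --- you use this implicitly and it does hold, but it deserves a sentence.
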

Before moving to the proof, let us first recall the key lemma (see \cite[Lemma 2.1]{Go}) for the convenience of the reader. This Lemma was inspired to the author by Lemma 4.4 from Holopainen \& Soardi \cite{HS} , which itself builds on a classical truncation lemma on Dirichlet harmonic functions.
\begin{lem}\label{tholoso-l}
Let $g \in \mb{K}_c$ be such that $g \notin \mb{k}_c$. Consider $g$ as a function $g:V \to \rr$. For $t \in \rr_{>0}$, let $g_t$ be defined as
\[
g_t(x) = \left\{ \begin{array}{ll}
g(x) & \text{if } |g(x)| < t, \\
t \tfrac{g(x)}{|g(x)|} & \text{if } |g(x)| \geq t.
\end{array} \right.
\]
Then there exists $t_0$ such that $g_t \notin \mb{k}_c$, for any $t > t_0$. In particular, $\mb{K}_c = \mb{k}_c$ if and only if $\mb{K}_c \cap \nabla\ell^\infty V = \mb{k}_c \cap \nabla\ell^\infty V$ 
\end{lem}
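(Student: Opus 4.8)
The plan is to show that the gradients of the truncations converge to $\nabla g$ in the norm of $c_0 E$, and then to exploit that $\mb{k}_c$ is closed by construction. First I would record the elementary Lipschitz estimate underlying everything. Write $\phi_t\colon\rr\to\rr$ for the truncation $\phi_t(s)=\sgn(s)\min(|s|,t)$, so that $g_t=\phi_t\circ g$, and set $\psi_t=\mathrm{id}-\phi_t$, i.e. $\psi_t(s)=\sgn(s)\max(|s|-t,0)$, so that $g-g_t=\psi_t\circ g$. Both $\phi_t$ and $\psi_t$ are $1$-Lipschitz and fix $0$; evaluating at the two endpoints of an edge $e=(x,y)$ yields the pointwise dominations $|\nabla g_t(e)|\le|\nabla g(e)|$ and $|\nabla(g-g_t)(e)|\le|\nabla g(e)|$. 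Since $\nabla g\in c_0 E$ and $\{e:|\nabla g_t(e)|\ge\eps\}\subseteq\{e:|\nabla g(e)|\ge\eps\}$ is finite for each $\eps>0$, the first domination shows $\nabla g_t\in c_0 E$; as $g_t$ is bounded, $\nabla g_t\in\nabla\ell^\infty V\cap c_0 E\subseteq\mb{K}_c$.

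The key step will be the convergence $\|\nabla g-\nabla g_t\|_{\ell^\infty E}\to 0$ as $t\to\infty$. Fix $\eps>0$. Because $\nabla g\in c_0 E$, the set $F_\eps=\{e\in E:|\nabla g(e)|\ge\eps\}$ is finite. For $e\notin F_\eps$ the domination $|\nabla(g-g_t)(e)|\le|\nabla g(e)|<\eps$ holds for every $t$, so these edges are uniformly under control. For each of the finitely many edges $e=(x,y)\in F_\eps$ one has $\nabla(g-g_t)(e)=\psi_t(g(y))-\psi_t(g(x))$, which vanishes once $t>\max(|g(x)|,|g(y)|)$; since $F_\eps$ is finite there is a threshold $T$ beyond which $\nabla(g-g_t)$ vanishes identically on $F_\eps$. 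Hence $\|\nabla g-\nabla g_t\|_{\ell^\infty E}<\eps$ for all $t>T$. I expect this to be the heart of the matter, and the only place where the hypothesis $\nabla g\in c_0 E$ is genuinely used: it confines the large-gradient edges to a finite set, on which mere pointwise convergence of the truncations suffices, while the uncontrolled unboundedness of $g$ itself never interferes.

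With this convergence the main assertion is then immediate. The subspace $\mb{k}_c$ is closed in $c_0 E$ by its definition as a closure, so its complement is open; by hypothesis $\nabla g\notin\mb{k}_c$, hence some $\ell^\infty E$-ball around $\nabla g$ avoids $\mb{k}_c$, and since $\nabla g_t\to\nabla g$ there is $t_0$ with $\nabla g_t\notin\mb{k}_c$ for all $t>t_0$. I would note in passing that the truncation depends on the chosen representative $g$ of the class, but $\nabla g$ — and therefore the whole argument — is insensitive to an additive constant, so one may fix any representative at the outset.

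Finally, for the ``in particular'' equivalence the forward implication is trivial. For the converse I would argue by contradiction: assume $\mb{K}_c\cap\nabla\ell^\infty V=\mb{k}_c\cap\nabla\ell^\infty V$ and suppose $\mb{k}_c\subsetneq\mb{K}_c$, picking $g$ with $\nabla g\in\mb{K}_c\setminus\mb{k}_c$. The main part produces some $t$ with $\nabla g_t\notin\mb{k}_c$; but $g_t$ is bounded, so $\nabla g_t\in\mb{K}_c\cap\nabla\ell^\infty V=\mb{k}_c\cap\nabla\ell^\infty V\subseteq\mb{k}_c$, a contradiction. Hence $\mb{K}_c=\mb{k}_c$.
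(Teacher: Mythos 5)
Your proof is correct and follows the same overall architecture as the paper's: establish $\|\nabla g-\nabla g_t\|_{\ell^\infty E}\to 0$ and then invoke the closedness of $\mb{k}_c$ (the paper phrases this last step contrapositively, via a sequence $t_n\to\infty$ with $g_{t_n}\in\mb{k}_c$ converging in norm to $g$, but that is the same fact). Where you differ is in the mechanism for the norm convergence. The paper roots the graph, integrates the gradient along paths to get $|g(v)|\le d(\mathfrak{o},v)\,\|\nabla g\|_{\ell^\infty E}$, concludes that $g_t\equiv g$ on the ball $B_{t/K}$ with $K=\|\nabla g\|_{\ell^\infty E}$, and then bounds $\|\nabla g-\nabla g_t\|_{\ell^\infty E}$ by $\|\nabla g\|_{\ell^\infty(B_{t/K}^\comp)}$, which is small because $\nabla g\in c_0E$. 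You instead decompose the edge set by gradient size: the $1$-Lipschitz domination $|\nabla(g-g_t)|\le|\nabla g|$ controls all edges outside the finite set $F_\eps$, and on $F_\eps$ the truncation eventually acts as the identity at both endpoints. Both arguments use the hypothesis $\nabla g\in c_0E$ in an essential way; yours dispenses with the choice of a root and the distance estimate (and so does not even need connectedness for this step), while the paper's yields the slightly more quantitative statement that $g_t$ agrees with $g$ on a metric ball of radius linear in $t$, a fact it reuses verbatim in the proof of Proposition \ref{tkinf-p}. You also spell out the ``in particular'' clause, which the paper leaves implicit, and your derivation of it is correct.
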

\begin{proof}
Although $g$ and $g_t$ are seen as functions, their norms is still given by their gradient. Assume, without loss of generality that $g(\mf{o}) =0$ for some preferred vertex (\ie root) $\mf{o} \in V$. Given $v \in V$ and $P$ a path from $\mf{o}$ to $v$, 
\[
|g(v)| = |g(v) - g(\mf{o})| = \sum_{e \in P:\mf{o} \to v} \nabla g(e) \leq d(\mf{o},v) \|\nabla g\|_{\ell^\infty E}. 
\]
In particular, $g_t$ is identical to $g$ on $B_{t/K}$ where $K = \| \nabla g \|_{\ell^\infty E}$ and $B_t$ is the ball of radius $t$ around $\mf{o}$ (in the combinatorial graph distance). Hence $\| \nabla g - \nabla g_t\|_{\ell^\infty E} \leq \|\nabla g\|_{\ell^\infty(B_{t/K}^\comp)}$, where $\ell^\infty(B_{t/K}^\comp)$ denotes the $\ell^\infty$-norm restricted to edges which are not inside $B_{t/K}$. Because $\nabla g \in c_0E$, $ \|\nabla g\|_{\ell^\infty(B_{t/K}^\comp)}$ tends to $0$, as $t$ tends to $\infty$.

Now if there is a infinite sequence $t_n$ such that  $g_{t_n}$ are in $\mb{k}_c$ and $t_n \to \infty$, then $g_{t_n}$ is a sequence of functions in $\mb{k}_c$ which tends (in $c_0 E$-norm) to $g$. This implies $g \in \mb{k}_c$, a contradiction. Hence, for some $t_0$, $g_t \notin \mb{k}_c$ given that $t >t_0$. 
\end{proof}

\begin{proof}[Proof of Proposition \ref{tc0triv-p}]
Given $f: V \to \rr$ with $\nabla f \in c_0E$, the aim is to show that $\nabla f \in \mb{k}_c$, \ie that the gradient of $f$ can be approximated by the gradient of finitely supported functions. By Lemma \ref{tholoso-l} (see also \cite[Lemma 2.1]{Go}), one can assume that $f \in \ell^\infty V$ without loss of generality.

So let $f \in \ell^\infty V$ with $\nabla f \in c_0 E$. Fix some ``root'' vertex $\mf{o}$ again and let $B_n$ be the ball of radius $n$ (for the combinatorial distance on the graph) around $\mf{o}$. Let $n_\eps$ be so that, for any $n> n_\eps$, $\|\nabla f\|_{\ell^\infty(B_n^\comp)} \leq \eps$ where $B_n^\comp$ is the complement of $B_n$. Let $g_\eps$ be identical to $f$ on $B_{n_\eps}$. Let $S_n := B_{n+1} \setminus B_n$. Then, using $\delta = \eps/ \|f \|_{\ell^\infty V}$, let 
\[
g_\eps(x) = \left\{ \begin{array}{ll}
f(x) & \text{if } x \in B_{n_\eps} \\
\big(1- (n-n_\eps) \delta \big) f(x) & \text{if } x \in S_n \text{ and } (n-n_\eps)\delta \in ]0,1[ \\
0 & \text{if } x \in S_n \text{ and } (n-n_\eps)\delta \geq 1 \\
\end{array}\right.
\]
Note that $g_\eps$ is finitely supported hence in $\ell^0V$. We claim that $\nabla g_\eps \overset{\eps \to 0}\longrightarrow \nabla f$. This follows by checking that $\| \nabla g_\eps -\nabla f\|_{\ell^\infty} \leq 3 \eps$. To do so, separate in cases depending on where the edge $(x,y)$ (or $(y,x)$, the arguments are symmetric) lies:\\
$\cdot$ $g_\eps-f \equiv 0$ inside $B_{n_\eps}$, so $\nabla g_\eps - \nabla f = \nabla (g_\eps -f) \equiv 0$ too; \\
$\cdot$ Let $n_0 = \lceil \delta^{-1}\rceil + n_\eps  -1$. On $B_{n_0}^\comp$, $g_\eps \equiv 0$ and $\|\nabla f\|_{\ell^\infty(B_{n_0}^\comp)} \leq \eps$ so $\| \nabla g_\eps - \nabla f\|_{\ell^\infty(B_{n_0}^\comp)} \leq \eps$;\\ 
$\cdot$ when $x,y \in S_n$ and $n \in [n_\eps, \delta^{-1} + n_\eps[$, $|\nabla f(x,y)| < \eps$ and $|\nabla g_\eps(x,y)|\leq |\nabla f(x,y)|$ (the scaling is the same inside a given sphere) hence $|\nabla g_\eps(x,y) - \nabla f(x,y)| \leq 2 \eps$;\\ 
$\cdot$ when $x \in S_n$, $y \in S_{n+1}$ and $n \in [n_\eps, \delta^{-1} + n_\eps[$. Then $\nabla f(x,y) < \eps$ while $|g_\eps(y) - g_\eps(x)| \leq  |\nabla f(x,y)|  + \delta |f(x)|$. Hence 
\[
|\nabla g_\eps(x,y) -\nabla f(x,y)| \leq  2 |\nabla f(x,y)| +  \delta |f(x)| < 3 \eps,
\]
by the choice of $\delta$.

To sum up, for any $\eps>0$, there is a finitely supported function $g_\eps$ so that $\| \nabla g -\nabla f\|_{\ell^\infty} \leq 3 \eps$. 
This implies $f \in \srl{ \nabla \ell^0V}^{c_0E} = \mb{k}_c$ and proves the claim.
\end{proof}
In case the reader is curious, the above equality is always false in $\ell^\infty$ (unless $G$ is finite): $\mb{K}_\infty \neq \mb{k}_\infty$. See \cite[Proposition A.3]{Go} for details.
However
\begin{prop}\label{tkinf-p}
$\srl{\mb{k}_\infty}^* = \mb{K}_\infty$
\end{prop}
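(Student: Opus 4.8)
The plan is to read $\srl{\,\cdot\,}^*$ as the weak-$*$ closure, using the identification $\ell^\infty E = (\ell^1 E)^*$ with the pairing $\pgen{h \mid g} = \sum_{e \in E} h(e) g(e)$ for $h \in \ell^\infty E$ and $g \in \ell^1 E$. The statement then amounts to the two inclusions $\srl{\mb{k}_\infty}^* \subseteq \mb{K}_\infty$ and $\mb{K}_\infty \subseteq \srl{\mb{k}_\infty}^*$. As a preliminary I would note that $\nabla \ell^\infty V \subseteq \mb{K}_\infty$ (since the valency is bounded, $f \in \ell^\infty V$ forces $\nabla f$ bounded), so once $\mb{K}_\infty$ is shown to be closed, the norm closure $\mb{k}_\infty$ lands inside $\mb{K}_\infty$ as well; this makes the approximation argument below meaningful.

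For the inclusion $\srl{\mb{k}_\infty}^* \subseteq \mb{K}_\infty$ it suffices to prove that $\mb{K}_\infty$ is weak-$*$ closed. The key observation is that ``being a gradient'' is a linear orthogonality condition against the finitely supported cycles: a function $h \in \ell^\infty E$ lies in $\mb{K}_\infty$ if and only if the signed sum of $h$ around every finite cycle vanishes, i.e. $\pgen{h \mid \nabla_2^* z} = 0$ for all $z \in \ell^0 C$. The ``only if'' is the telescoping identity; the ``if'' integrates $h$ along paths emanating from a root $\mf{o}$, which is well defined precisely because the difference of two such paths is a cycle (here the standing connectedness hypothesis is used). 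Hence
\[
\mb{K}_\infty = \big( \nabla_2^* \ell^0 C \big)^\perp \subseteq \ell^\infty E ,
\]
the annihilator of a subset of the predual $\ell^1 E$ (note $\nabla_2^* \ell^0 C \subseteq \ell^0 E \subseteq \ell^1 E$). Annihilators of subsets of the predual are weak-$*$ closed, so $\mb{K}_\infty$ is weak-$*$ closed (and, a fortiori, norm closed, which retroactively justifies $\mb{k}_\infty \subseteq \mb{K}_\infty$).

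For the reverse inclusion I would approximate each $h = \nabla f \in \mb{K}_\infty$ weak-$*$ by elements of $\mb{k}_\infty$ by truncation, in the spirit of Lemma \ref{tholoso-l}. Let $\phi_t(s) = \max(-t, \min(t, s))$ be the clamp onto $[-t,t]$ and set $f_t = \phi_t \circ f$. Then $f_t \in \ell^\infty V$, so $\nabla f_t \in \nabla \ell^\infty V \subseteq \mb{k}_\infty$. Since $\phi_t$ is $1$-Lipschitz, $|\nabla f_t(e)| \leq |\nabla f(e)| \leq \|h\|_{\ell^\infty E}$ for every edge $e$, so the family $(\nabla f_t)_t$ is uniformly bounded; moreover $\nabla f_t(e) \to \nabla f(e)$ pointwise as $t \to \infty$, because for $t$ large the clamp is inactive at both endpoints of $e$. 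For any fixed $g \in \ell^1 E$ the summands $\nabla f_t(e) g(e)$ are dominated by $\|h\|_{\ell^\infty E} |g(e)|$, which is summable, so dominated convergence yields $\pgen{\nabla f_t \mid g} \to \pgen{\nabla f \mid g}$. Thus $\nabla f_t \to h$ weak-$*$ as $t \to \infty$, whence $h \in \srl{\mb{k}_\infty}^*$, and combining the two inclusions finishes the proof.

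I expect the only genuinely delicate point to be the identification $\mb{K}_\infty = (\nabla_2^* \ell^0 C)^\perp$ together with the observation that it exhibits $\mb{K}_\infty$ as the annihilator of a subset of the \emph{predual} $\ell^1 E$; this is exactly what forces weak-$*$ closedness and thereby one of the two inclusions. The truncation step is routine once the uniform bound $|\nabla f_t| \leq |\nabla f|$ is recorded via the $1$-Lipschitz clamp, so the conceptual work is concentrated in the cycle-space characterization rather than in the convergence estimate.
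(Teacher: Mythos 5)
Your proof is correct and follows essentially the same route as the paper: the weak-$*$ closedness of $\mb{K}_\infty$ via its description as an annihilator of (the span of) the finitely supported cycles in the predual $\ell^1E$ (this is the content of Lemma \ref{tdual-l}, $\mb{K}_\infty=\mb{f}_1^\perp$), and the reverse inclusion by the truncation $f_t$ of Lemma \ref{tholoso-l}, whose gradients are uniformly bounded and converge pointwise, hence weak-$*$ (Lemma \ref{tweakstar-l}, which you reprove on the spot by dominated convergence).
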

Before moving on to the proof, let us quickly recall a basic fact about weak$^*$-convergence:
\begin{lem}\label{tweakstar-l}
Let $p \in [1,\infty]$ and $X$ be a countable set. The sequence $\{y_n\}_{n \in \nn}$ is weak$^*$ convergent in $\ell^pX$ if and only it is bounded in $\ell^pX$ and point-wise convergent.
\end{lem}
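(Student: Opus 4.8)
The plan is to treat the three cases $p=1$, $p\in\,]1,\infty[$ and $p=\infty$ uniformly by writing $Z$ for the relevant predual, so that $Z^* = \ell^pX$: namely $Z = c_0X$ when $p=1$, $Z = \ell^{p'}X$ when $p\in\,]1,\infty[$, and $Z = \ell^1X$ when $p=\infty$. In every case $Z$ is a Banach space in which the finitely supported functions $\ell^0X$ are dense, the Dirac masses $\delta_x$ lie in $\ell^0X \subset Z$, and the duality pairing is the usual one $\pgen{y \mid z} = \sum_x y(x)z(x)$, bounded by $\|y\|_{\ell^p}\|z\|_{Z}$ via H\"older. Throughout, weak$^*$ convergence of $y_n$ to $y$ means $\pgen{y_n \mid z} \to \pgen{y \mid z}$ for every $z \in Z$.

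For the forward direction, suppose $y_n \to y$ weak$^*$. Boundedness is the uniform boundedness principle: for each $z \in Z$ the scalars $\pgen{y_n \mid z}$ converge, hence are bounded, so --- $Z$ being complete --- one gets $\sup_n \|y_n\|_{\ell^p} < \infty$. Pointwise convergence is then immediate upon testing against the Dirac masses, since $y_n(x) = \pgen{y_n \mid \delta_x} \to \pgen{y \mid \delta_x} = y(x)$.

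For the converse, suppose $\|y_n\|_{\ell^p} \leq M$ for all $n$ and $y_n \to y$ pointwise. First I would check that $y \in \ell^pX$ with $\|y\|_{\ell^p} \leq M$ (by Fatou's lemma when $p<\infty$, and directly from $|y(x)| = \lim_n |y_n(x)| \leq M$ when $p=\infty$), so that weak$^*$ convergence in $\ell^pX$ even makes sense. Then I fix $z \in Z$ and $\eps > 0$, use density to choose $w \in \ell^0X$ with $\|z-w\|_{Z} < \eps$, and split
\[
\pgen{y_n - y \mid z} = \pgen{y_n - y \mid w} + \pgen{y_n - y \mid z - w}.
\]
The second term is $\leq \|y_n - y\|_{\ell^p}\,\|z-w\|_{Z} \leq 2M\eps$ by H\"older, uniformly in $n$; the first term is a finite sum over $\mathrm{supp}(w)$ and tends to $0$ as $n \to \infty$ by pointwise convergence. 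Letting $n \to \infty$ and then $\eps \to 0$ yields $\pgen{y_n \mid z} \to \pgen{y \mid z}$, which is the claim.

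The only genuine subtlety is bookkeeping: correctly identifying the predual $Z$ in each of the three cases and verifying that $\ell^0X$ is dense in it and that the Dirac masses belong to it. These facts fail to be uniform only notationally, since $\ell^0X$ is dense in $c_0X$, in $\ell^{p'}X$ for $p'<\infty$ (\ie $p>1$), and in $\ell^1X$. Apart from this, the forward direction rests entirely on the uniform boundedness principle and the converse on the routine $3\eps$-estimate above.
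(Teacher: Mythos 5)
Your proof is correct and follows essentially the same route as the paper: Banach--Steinhaus against the completeness of the predual plus testing on Dirac masses for the forward direction, and a truncation-to-finite-support/H\"older splitting for the converse (your approximation of $z$ by a finitely supported $w$ is the same device as the paper's restriction of the test function to a finite set $F_\eps$). Your explicit verification that the pointwise limit lies in $\ell^pX$ is a small point the paper leaves implicit, but otherwise the two arguments coincide.
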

\begin{proof}[Proof of Proposition \ref{tkinf-p}]
It is easy to see from either Lemma \ref{tweakstar-l} or Lemma \ref{tdual-l} that $\mb{K}_\infty$ is weak$^*$-closed, hence $\srl{\mb{k}_\infty}^* \subseteq \mb{K}_\infty$.

For the other direction, the proof essentially goes as in the proof of Proposition \ref{tc0triv-p}. Note that by Lemma \ref{tweakstar-l}, it suffices to show that any $f \in \mb{K}_\infty$ is a point-wise limit of a bounded sequence $g_n$. To do so, it is (again) more convenient to think of these as functions (and the norm is taken on their gradient). Consider $f_t$ as in Lemma \ref{tholoso-l} by truncating $f$. As in said lemma, $f_t \equiv f$ on $B_{t/K}$ where $K= \|\nabla f\|_{\ell^\infty}$, so $\nabla f_t \to \nabla f$ point-wise. On the other hand, $\| \nabla f_t\|_{\ell^\infty} \leq \|\nabla f\|_{\ell^\infty}$ and $f_t \in \mb{k}_\infty$. Hence $f_t$ is a bounded point-wise convergent sequence of elements of $\mb{k}_\infty$. Thus, $f \in \srl{\mb{k}_\infty}^*$.
\end{proof}

\subsection{Annihilators and duals}

Some facts about annihilators will be required. The reader unfamiliar with these is invited to read \cite[{\S}4.8]{Rud}.
Our first step here is to identify the annihilators of cuts and flows.
\begin{lem}\label{tdual-l}
\[
\begin{array}{ccccccccccccc}
		&	& \mb{K}_1 &=& \mb{f}_c^\perp &\supseteq & \mb{F}_c^\perp &=& \srl{\mb{k}_1}^* &\supseteq & \mb{k}_1 &=& \mb{F}_\infty^\perpud \\
\mb{k}_\infty^\perpud &= & \mb{F}_1 &=& \mb{k}_c^\perp &\supseteq & \mb{K}_c^\perp &=& \srl{\mb{f}_1}^* &\supseteq & \mb{f}_1 &=& \mb{K}_\infty^\perpud
\end{array}
\]
\end{lem}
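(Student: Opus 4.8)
The plan is to place all twelve spaces inside the single chain of dualities $(c_0E)^* = \ell^1E$ and $(\ell^1E)^* = \ell^\infty E$, and then read the whole table off the bipolar theorem. First I would check that every entry really is a subspace of $\ell^1E$: the norm-closed spaces $\mb{K}_1,\mb{k}_1,\mb{F}_1,\mb{f}_1$ and the two weak$^*$ closures are subspaces of $\ell^1E$ by definition; each $(-)^\perp$ is the annihilator of a subspace of $c_0E$, hence lies in $(c_0E)^* = \ell^1E$; and each $(-)^\perpud$ is the pre-annihilator of a subspace of $\ell^\infty E = (\ell^1E)^*$, hence again lies in $\ell^1E$. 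The facts I would quote from \S\ref{sapp} (or \cite[\S4.8]{Rud}) are the two bipolar statements: for a subspace $S$ of a Banach space $X$ one has $(S^\perp)^\perpud = \srl{S}$ (norm closure), while for a subspace $M$ of $X^*$ one has $(M^\perpud)^\perp = \srl{M}^*$ (weak$^*$ closure); equivalently, for a bounded operator $T$, $(\img T)^\perp = \ker T^*$ and $(\ker T)^\perp = \srl{\img T^*}^*$.

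The only non-formal input is the discrete Poincaré lemma: on a connected graph $\phi \in \rr^E$ is a gradient if and only if its circulation $\pgen{\phi \mid z}$ vanishes for every $z$ in $Z := \nabla_2^*\ell^0C$ (the forward direction telescopes; the converse integrates $\phi$ along paths exactly as in Lemma \ref{tholoso-l}, using connectedness). Intersecting with the three completions turns each $\mb{K}$-space into a (pre-)annihilator of $Z$: $\mb{K}_1 = Z^\perp$ (annihilator of $Z \subseteq c_0E$), $\mb{K}_\infty = Z^\perp$ (annihilator of $Z \subseteq \ell^1E$), and $\mb{K}_c = Z^\perpud$ (pre-annihilator of $Z \subseteq \ell^1E$). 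Dually, the adjoint identity $\pgen{\nabla^*\phi \mid g} = \pgen{\phi \mid \nabla g}$ (valid in the $\ell^1$--$c_0$ and $\ell^1$--$\ell^\infty$ pairings) shows $\nabla^*\phi = 0$ is precisely the condition that $\phi$ annihilate all finitely supported gradients, so $\mb{F}_1 = (\nabla\ell^0V)^\perp$ and, testing against $\ell^\infty V$ instead, $\mb{F}_1 = (\nabla\ell^\infty V)^\perpud$; likewise $\mb{F}_\infty = (\nabla\ell^1V)^\perp$ and $\mb{F}_c = \ker(\nabla^* \colon c_0E \to c_0V)$.

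With these identifications the six equalities fall out. For the ``annihilator of an image $=$ kernel of the adjoint'' entries: a functional kills $\mb{f}_c = \srl{Z}^{c_0E}$ iff it kills $Z$, whence $\mb{f}_c^\perp = \mb{K}_1$; and $\mb{k}_c^\perp = (\nabla\ell^0V)^\perp = \mb{F}_1 = (\nabla\ell^\infty V)^\perpud = \mb{k}_\infty^\perpud$. For the ``(pre-)annihilator of a (pre-)annihilator $=$ closure'' entries I would apply the bipolar theorem: $\mb{F}_\infty^\perpud = \big((\nabla\ell^1V)^\perp\big)^\perpud = \srl{\nabla\ell^1V}^{\ell^1} = \mb{k}_1$ and $\mb{K}_\infty^\perpud = (Z^\perp)^\perpud = \srl{Z}^{\ell^1} = \mb{f}_1$, using $\srl{\nabla\ell^0V} = \srl{\nabla\ell^1V}$ by density and boundedness of $\nabla$. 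The two weak$^*$ entries are the same theorem on the other side: since $\mb{F}_c = \ker(\nabla^*|_{c_0E})$ and the adjoint of $\nabla^*\colon c_0E\to c_0V$ is $\nabla\colon\ell^1V\to\ell^1E$, one gets $\mb{F}_c^\perp = \srl{\img \nabla|_{\ell^1V}}^* = \srl{\mb{k}_1}^*$; and from $\mb{K}_c = Z^\perpud$ one gets $\mb{K}_c^\perp = \srl{Z}^* = \srl{\mb{f}_1}^*$.

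Finally, the four displayed inclusions come from inclusions of the defining subspaces: cycle-boundaries are divergence-free ($\nabla^*\nabla_2^* = 0$), so $\mb{f}_c \subseteq \mb{F}_c$ and hence $\mb{K}_1 = \mb{f}_c^\perp \supseteq \mb{F}_c^\perp$; gradients of finitely supported functions lie in $\mb{K}_c$, so $\mb{k}_c \subseteq \mb{K}_c$ and hence $\mb{F}_1 = \mb{k}_c^\perp \supseteq \mb{K}_c^\perp$; and the remaining two, $\mb{F}_c^\perp \supseteq \mb{k}_1$ and $\mb{K}_c^\perp \supseteq \mb{f}_1$, merely say that a set lies in its own weak$^*$ closure. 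The main thing to watch --- and the only real obstacle --- is topological bookkeeping: every time a (pre-)annihilator of a (pre-)annihilator is taken, one must record whether the result is the norm closure or the weak$^*$ closure, and in which of the two duals it lives. These weak$^*$ closures are exactly where the reduced $\ell^1$/$c_0$-cohomology sits and are in general strictly larger than the norm closures, so conflating the two topologies would wrongly promote the inclusions $\supseteq$ to equalities and collapse the table.
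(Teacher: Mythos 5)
Your proof is correct and follows essentially the same route as the paper: the identities $(\img L)^\perp = \ker L^*$ and the bipolar theorem give the four $\mb{k}$--$\mb{F}$ equalities and their duals, while the ``circulation vanishes iff it is a gradient'' observation (your discrete Poincar\'e lemma) handles the $\mb{f}$--$\mb{K}$ side exactly as in the paper. Your closing remark about carefully distinguishing norm closures from weak$^*$ closures is precisely the point the paper's layout is designed to keep track of.
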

\begin{proof}
First, one shows the equalities $\mb{k}_c^\perp = \mb{F}_1$, $\mb{k}_1^\perp = \mb{F}_\infty$, $\mb{k}_1^\perpud = \mb{F}_c$ and $\mb{k}_\infty^\perpud = \mb{F}_1$.
These are all instances of the identity $(\img L)^\perp = \ker L^*$ (where $L$ is a bounded operator). For example,
\[
\begin{array}{rl}
\mb{k}_c^\perp 
&= (\nabla \ell^0V)^\perp\\
&= \{ y \in \ell^1E \mid \forall x \in \nabla \ell^0V, \pgen{y \mid x } = 0\}  \\
&= \{ y \in \ell^1E \mid \forall x' \in \ell^0V, \pgen{y \mid \nabla x' } = 0\}  \\
&= \{ y \in \ell^1E \mid \forall x' \in \ell^0V, \pgen{\nabla^* y \mid x' } = 0\}  \\
&= \{ y \in \ell^1E \mid \forall v \in V, \nabla^* y(v) = 0\}  \\
&= \ker \nabla^* \subset \ell^1E\\
&= \mb{F}_1.
\end{array}
\]
From those four equalities, one gets: $\mb{k}_c = \mb{F}_1^\perpud$, $\mb{k}_1 = \mb{F}_\infty^\perpud$, $\srl{\mb{k}_1}^*= \mb{F}_c^\perp$ and $\srl{\mb{k}_\infty}^* = \mb{F}_1^\perp$.

To show $\mb{f}_c^\perp = \mb{K}_1$, one needs to argue slightly differently\footnote{If $\nabla_2$ is available as a bounded operator, then the proof is absolutely identical.}. First, if $y \in \mb{K}_1$ then it sums to zero on any oriented cycle, hence any finite combination of such, hence a dense subspace of $\mb{f}_c$ and so, by continuity, $y$ is in the annihilator of $\mb{f}_c$: $\mb{K}_1 \subseteq \mb{f}_c^\perp$. Second, if $x \in \ell^1 E$ is in $\mb{f}_c^\perp$, then it sums to zero on any oriented cycle. This is exactly the condition that $x$ is the gradient of some function, hence $\mb{f}_c^\perp \subseteq \mb{K}_1$.

The equalities $\mb{f}_1^\perp = \mb{K}_\infty$ and $\mb{f}_1^\perpud = \mb{K}_c$ are obtained by the same arguments. By taking [weak-]annihilators, one gets:  $\mb{f}_c = \mb{K}_1^\perpud$, $\mb{f}_1 = \mb{K}_\infty^\perpud$ and $\srl{\mb{f}_1}^*= \mb{K}_c^\perp$.

Inclusions are presented in the statement so as to be obvious.
\end{proof}
Combining Lemma \ref{tdual-l} with Corollary \ref{toneend-c}, Proposition \ref{tc0triv-p} and Proposition \ref{tkinf-p}:
\[
\begin{array}{r@{\;}c@{\;}l@{\qquad}r@{}c@{}l@{\;}c@{\;}l@{\qquad}r@{\;}c@{\;}l@{}c@{}l}
\mb{k}_c & = & \mb{K}_c, & \mb{k}_1 &\overset{o.e.?}\subsetneq 	& \mb{K}_1,	  &   & 	& \mb{k}_\infty &\subsetneq &\srl{\mb{k}_\infty}^* &=				& \mb{K}_\infty\\
         &   &           & \mb{f}_1 &= 				&\srl{\mb{f}_1}^* & = & \mb{F}_1&               &	    &\srl{\mb{f}_\infty}^* &\underset{o.e.?}\subsetneq 	& \mb{F}_\infty\\
\end{array}
\]
where 
$\mb{f}_\infty$ is only defined if $\nabla_2^*$ can be defined as a bounded operator,
and $\overset{o.e.?}\subsetneq$ means the inclusion is strict if and only if the graph has $>1$ ends.

Note that there are probably more direct proofs of some of the equalities above. For example, it is easy to check that $\img \nabla_2^* \cap \ell^1E = \mb{F}_1$. From there, it is not too difficult to conclude $\mb{f}_1 = \mb{F}_1$.

The ``dual'' criterion for the absence of non-constant harmonic functions with gradient in $c_0$ can now be proven. In order to alleviate notations, the shorthand ``has $\hdc$'' will be used in order to say ``has no non-constant harmonic functions with gradient in $c_0$''.

\begin{lem}\label{tanni-l}
$G$ has $\hdc$ if and only if $\ell^1E = \srl{\mb{k}_1 + \mb{F}_1}^*$. \\
If $G$ has one end, then $G$ has $\hdc$ if and only if $\ell^1E = \srl{\mb{K}_1 + \mb{F}_1}^*$.
\end{lem}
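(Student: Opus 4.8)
The plan is to turn the statement ``$G$ has $\hdc$'' into the vanishing of an intersection of two subspaces of $c_0 E$, and then dualise using the pairing between $c_0 E$ and $\ell^1 E = (c_0 E)^*$. By Lemma \ref{tdefharm-l}, ``$G$ has $\hdc$'' is equivalent to $\mb{F}_c \cap \mb{K}_c = \{0\}$, and Proposition \ref{tc0triv-p} lets me replace $\mb{K}_c$ by $\mb{k}_c$. So the first assertion reduces to proving $\mb{F}_c \cap \mb{k}_c = \{0\} \iff \ell^1 E = \srl{\mb{k}_1 + \mb{F}_1}^*$, where $\srl{\cdot}^*$ denotes weak-$*$ closure in $\ell^1 E$.

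Second, I would invoke the standard duality identity for \emph{norm-closed} subspaces $M,N$ of a Banach space $X$, namely $(M \cap N)^\perp = \srl{M^\perp + N^\perp}^*$ (weak-$*$ closure in $X^*$); this is exactly the material of \S{}\ref{sapp}, equivalently \cite[\S{}4.8]{Rud}, and follows formally from $(M^\perp + N^\perp)^\perpud = (M^\perp)^\perpud \cap (N^\perp)^\perpud = M \cap N$ together with $(S^\perpud)^\perp = \srl{S}^*$. Here $\mb{F}_c = \ker \nabla^*$ is closed (kernel of the bounded operator $\nabla^*$, where boundedness uses the standing bounded-valency hypothesis) and $\mb{k}_c$ is closed by definition, so I may take $M = \mb{F}_c$ and $N = \mb{k}_c$. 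Lemma \ref{tdual-l} then supplies the two annihilators $\mb{k}_c^\perp = \mb{F}_1$ and $\mb{F}_c^\perp = \srl{\mb{k}_1}^*$, giving $(\mb{F}_c \cap \mb{k}_c)^\perp = \srl{\srl{\mb{k}_1}^* + \mb{F}_1}^*$. Since weak-$*$ closing a set already containing $\mb{k}_1$ absorbs the inner closure, this collapses to $\srl{\mb{k}_1 + \mb{F}_1}^*$.

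Third, I conclude with the elementary fact that a closed subspace $S \subseteq X$ is $\{0\}$ if and only if $S^\perp = X^*$: one direction is immediate, and the other is $(S^\perp)^\perpud = \srl{S} = S$ combined with $(X^*)^\perpud = \{0\}$. Applying this to $S = \mb{F}_c \cap \mb{k}_c$ and the computation above yields $\mb{F}_c \cap \mb{k}_c = \{0\} \iff \srl{\mb{k}_1 + \mb{F}_1}^* = \ell^1 E$, which is the first assertion. For the one-ended case, Corollary \ref{toneend-c} gives that $G$ has one end if and only if $\mb{k}_1 = \mb{K}_1$; substituting $\mb{K}_1$ for $\mb{k}_1$ in the equivalence just proved delivers the second statement verbatim.

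The bookkeeping is routine once these pieces are assembled; the one point demanding genuine care (and the real obstacle) is the nontrivial inclusion in $(M \cap N)^\perp = \srl{M^\perp + N^\perp}^*$. The inclusion $\srl{M^\perp + N^\perp}^* \subseteq (M \cap N)^\perp$ is automatic, but the reverse requires the full duality machinery and, crucially, that the closure be taken in the weak-$*$ (not the norm) topology and that $M,N$ be closed. I would therefore make sure to flag precisely where closedness of $\mb{F}_c$ and $\mb{k}_c$ enters, and to double-check that the weak-$*$ absorption step $\srl{\srl{\mb{k}_1}^* + \mb{F}_1}^* = \srl{\mb{k}_1 + \mb{F}_1}^*$ is justified by $\mb{k}_1 \subseteq \srl{\mb{k}_1}^* \subseteq \srl{\mb{k}_1 + \mb{F}_1}^*$.
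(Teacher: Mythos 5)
Your proposal is correct and follows essentially the same route as the paper: reduce via Lemma \ref{tdefharm-l} and Proposition \ref{tc0triv-p} to $\mb{F}_c \cap \mb{k}_c = \{0\}$, dualise with $(M\cap N)^\perp = \srl{M^\perp + N^\perp}^*$, read off the annihilators from Lemma \ref{tdual-l}, and invoke Corollary \ref{toneend-c} for the one-ended case. Your extra care about the closedness of $\mb{F}_c$ and $\mb{k}_c$ (needed so that the sandwich of inclusions in Lemma \ref{tansomint-l} collapses to an equality) and about absorbing the inner weak-$*$ closure is a welcome tightening of details the paper leaves implicit, not a different argument.
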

\begin{proof}
Lemma \ref{tdefharm-l} shows that $G$ has $\hdc$ exactly when $\mb{F}_c \cap \mb{K}_c = \{0\}$.
Using the basic properties of annihilators $(A\cap B)^\perp = \srl{A^\perp + B^\perp}^*$ and that $\ell^1 E$ is the annihilator of $\{0\} \subset c_0E$, one gets
\[
\hdc \text{ holds} \iff \ell^1E = \srl{\mb{F}_c^\perp + \mb{K}_c^\perp}^*
\]
By proposition \ref{tc0triv-p}, $\mb{K}_c = \mb{k}_c$. Lemma \ref{tdual-l} then implies $\mb{K}_c^\perp = \mb{k}_c^\perp = \mb{F}_1$ and $\mb{F}_c^\perp = \srl{\mb{k}_1}^*$. Hence
\[
\hdc \text{ holds} \iff \ell^1E = \srl{\mb{k}_1 + \mb{F}_1}^*
\]
By Corollary \ref{toneend-c} (see also directly \cite[Proposition A.2]{Go}), if $G$ has one-end then $\mb{K}_1 = \mb{k}_1$, so 
\[
\text{one-end implies } \qquad \hdc \text{ holds} \iff \ell^1E = \srl{\mb{K}_1 + \mb{F}_1}^* \qedhere
\]
\end{proof}
\begin{rmk}
~\\
$\cdot$ Since all infinite groups have a non-trivial Lipschitz harmonic function (see Shalom \& Tao \cite{ST}), $\mb{K}_\infty \cap \mb{F}_\infty \neq \{0\}$ and by the same arguments as in the proof of Lemma \ref{tanni-l}, $\srl{\mb{k}_1 + \mb{f}_1} = \srl{\mb{k}_1 + \mb{F}_1} \subsetneq \ell^1E$. If further $G$ is one-ended, then $\srl{\mb{K}_1 + \mb{F}_1} \subsetneq \ell^1E$. \\
$\cdot$ Since there are one-ended groups (\eg $\zz^2$) with $\hdc$, there are groups where $\srl{\mb{K}_1 + \mb{F}_1} \subsetneq \srl{\mb{K}_1 + \mb{F}_1}^*  = \ell^1E$.  \\
$\cdot$ Since any two-ended group (\eg $\zz$) has $\hdc$, there are groups where $\srl{\mb{k}_1 + \mb{F}_1} \subsetneq \srl{\mb{k}_1 + \mb{F}_1}^* = \ell^1E$. In fact, in the usual Cayley graph of $\zz$ (the line), it is easy to see that $\mb{K}_1 = \ell^1E$ while $\mb{F}_1 = \emptyset$. Hence there are groups where $\srl{\mb{k}_1 + \mb{F}_1} \subsetneq \srl{\mb{k}_1 + \mb{F}_1}^* = \srl{\mb{K}_1 + \mb{F}_1} = \ell^1E$. \\
$\cdot$ In a the Cayley graph of group with infinitely many ends (\eg a regular tree), one can check that $\srl{\mb{k}_1}^* \subsetneq \mb{K}_1$. To do so, consider a half-tree $H$ (a connected component after removing an edge) and look at $\nabla \un_H \in \mb{K}_1$. The fact the half-tree has a strong isoperimetric ratio function ($\mathrm{IS}_\omega$, see \S{}\ref{ssisop}) implies that $\nabla \un_H \notin \srl{\mb{k}_1}^*$.
\end{rmk}

The proof of Lemma \ref{tdual-l} can be mimicked without problem in the reflexive case. The output is cleaner, since closure and weak$^*$-closure coincide.
\begin{lem}\label{tduallp-l}
Let $p' = \tfrac{p}{p-1}$ be the H{\"o}lder conjugate of $p \in ]1,\infty[$, then
\[
\begin{array}{ccccccc}
\mb{K}_p &=& \mb{f}_{p'}^\perp &\supseteq & \mb{F}_{p'}^\perp &=& \mb{k}_p\\
\mb{F}_p &=& \mb{k}_{p'}^\perp &\supseteq & \mb{K}_{p'}^\perp &=& \mb{f}_p\\
\end{array}
\]
\end{lem}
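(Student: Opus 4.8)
The plan is to prove Lemma \ref{tduallp-l} by exactly mimicking the structure of the proof of Lemma \ref{tdual-l}, but exploiting reflexivity of $\ell^pE$ for $1<p<\infty$ to avoid any fuss with weak$^*$-closures. The two basic identities I would establish first are $\mb{k}_p^\perp = \mb{F}_{p'}$ and $\mb{f}_{p'}^\perp = \mb{K}_p$ (together with the symmetric pair obtained by swapping $p$ and $p'$). The first is an immediate instance of the general fact $(\img L)^\perp = \ker L^*$: writing $\mb{k}_p = \srl{\nabla\ell^0V}^{\ell^pE}$, an element $y\in\ell^{p'}E$ annihilates $\mb{k}_p$ iff $\pgen{y\mid\nabla x'}=0$ for all $x'\in\ell^0V$, iff $\pgen{\nabla^*y\mid x'}=0$ for all such $x'$, iff $\nabla^*y\equiv 0$, i.e.\ iff $y\in\ker\nabla^*\subseteq\ell^{p'}E=\mb{F}_{p'}$. (Here the pairing identity $\pgen{\nabla^*y\mid g}=\pgen{y\mid\nabla g}$ is valid precisely in the $\ell^p$/$\ell^{p'}$ duality already noted in the excerpt.) The second identity, $\mb{f}_{p'}^\perp=\mb{K}_p$, is the cycle-based argument from Lemma \ref{tdual-l}: a functional $y$ annihilates $\mb{f}_{p'}=\srl{\nabla_2^*\ell^0C}^{\ell^{p'}E}$ iff it sums to zero around every finite oriented cycle, which is exactly the integrability condition characterising gradients, so $\mb{f}_{p'}^\perp=\nabla\rr^V\cap\ell^pE=\mb{K}_p$.

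Once these four primitive equalities are in hand, the two displayed equalities with the inclusion signs follow by taking annihilators and using reflexivity. Since $\ell^pE$ is reflexive, for any closed subspace $S$ one has $S^{\perp\perp}=S$ and there is no distinction between the closure and the weak$^*$-closure of an annihilator. Thus from $\mb{k}_p^\perp=\mb{F}_{p'}$ I get $\mb{k}_p=\mb{k}_p^{\perp\perp}=\mb{F}_{p'}^\perp$, which is the rightmost equality of the first line; and from $\mb{f}_{p'}^\perp=\mb{K}_p$ I read off the leftmost equality directly. The second displayed line is obtained by interchanging the roles of $p$ and $p'$ (note $(p')'=p$), giving $\mb{F}_p=\mb{k}_{p'}^\perp$ and $\mb{K}_{p'}^\perp=\mb{f}_p$. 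The inclusions $\supseteq$ in the middle are the reflexive analogue of the containments in Lemma \ref{tdual-l}; they come from the evident inclusions $\mb{k}_p\subseteq\mb{K}_p$ and $\mb{f}_p\subseteq\mb{F}_p$, which reverse upon taking annihilators, so e.g.\ $\mb{F}_{p'}^\perp=\mb{k}_p\subseteq\mb{K}_p=\mb{f}_{p'}^\perp$ forces $\mb{f}_{p'}^\perp\supseteq\mb{F}_{p'}^\perp$, matching the layout.

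The step I expect to require the most care is the cycle identity $\mb{f}_{p'}^\perp=\mb{K}_p$, specifically the inclusion showing that summing to zero on all finite cycles genuinely implies being a gradient in $\rr^V$. In Lemma \ref{tdual-l} this was singled out as the one place where the $(\img L)^\perp=\ker L^*$ template does not apply directly (unless $\nabla_2$ is a bounded operator, as in the finitely presented case of Remark \ref{rnabla2}), because $\nabla_2^*$ is defined on $\ell^0C$ rather than as an honest bounded operator between the relevant $\ell^p$ spaces. The argument is nonetheless the same as before: a functional annihilating a dense subspace of $\mb{f}_{p'}$ annihilates all of it by continuity, giving the vanishing on every cycle, and conversely an element of $\mb{K}_p$ sums to zero on every cycle hence annihilates $\nabla_2^*\ell^0C$ and, by continuity of the pairing, its closure $\mb{f}_{p'}$. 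Everything else is a mechanical transcription of the $\ell^1$/$c_0$ proof into the self-dual $\ell^p$/$\ell^{p'}$ setting, simplified by the fact that $\srl{\,\cdot\,}^*$ may everywhere be replaced by ordinary norm closure.
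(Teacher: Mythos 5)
Your proposal is correct and follows exactly the route the paper intends: it transcribes the proof of Lemma \ref{tdual-l} into the reflexive setting, establishing $\mb{k}_p^\perp=\mb{F}_{p'}$ via $(\img L)^\perp=\ker L^*$ and $\mb{f}_{p'}^\perp=\mb{K}_p$ via the cycle argument, then using $S^{\perp\perp}=S$ for closed subspaces of a reflexive space. The paper itself gives no further detail beyond "mimic Lemma \ref{tdual-l}; closure and weak$^*$-closure coincide," so your write-up is a faithful (and more explicit) execution of the same proof.
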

Lemma \ref{tanni-l} also has an analogue for $\ell^p$ (proved using Lemma \ref{tduallp-l}). Here ``has $\hdp$'' means ``has no non-constant harmonic functions with gradient in $\ell^p$''.
\begin{lem}\label{tannilp-l}
$G$ has $\hdp$ if and only if $\ell^{p'}E = \srl{\mb{k}_{p'} + \mb{f}_{p'}}^*$.
\end{lem}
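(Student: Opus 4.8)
The plan is to imitate the proof of Lemma \ref{tanni-l} step for step, replacing $c_0$ by $\ell^p$ and $\ell^1$ by $\ell^{p'}$, while exploiting that for $p \in ]1,\infty[$ both $\ell^p E$ and $\ell^{p'} E$ are reflexive, so that closure and weak\(^*\)-closure coincide and no subtlety of the $c_0$/$\ell^1$ case survives.

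First I would record the $\ell^p$-analogue of Lemma \ref{tdefharm-l}. Since the graph is connected, any gradient in $\ell^p E$ integrates back, up to an additive constant, to a function on $V$; such a function is harmonic precisely when its gradient lies in $\ker \nabla^* = \mb{F}_p$, and it is a genuine gradient precisely when it lies in $\mb{K}_p$. Hence the space of non-constant harmonic functions with gradient in $\ell^p$ is identified, modulo constants, with $\mb{F}_p \cap \mb{K}_p$, so that $G$ has $\hdp$ if and only if $\mb{F}_p \cap \mb{K}_p = \{0\}$.

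Next I would dualize. By Lemma \ref{tansomint-l} one has $(\mb{F}_p \cap \mb{K}_p)^\perp = \srl{\mb{F}_p^\perp + \mb{K}_p^\perp}^*$, the annihilators being taken inside $\ell^{p'} E$. A closed subspace $M$ of the reflexive space $\ell^p E$ satisfies $M = \{0\}$ if and only if $M^\perp = \ell^{p'} E$; applying this to $M = \mb{F}_p \cap \mb{K}_p$ (which is closed, being an intersection of closed subspaces) converts the condition $\mb{F}_p \cap \mb{K}_p = \{0\}$ into $\ell^{p'} E = \srl{\mb{F}_p^\perp + \mb{K}_p^\perp}^*$. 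It then remains to identify the two annihilators using Lemma \ref{tduallp-l}: that lemma gives $\mb{F}_p = \mb{k}_{p'}^\perp$ and $\mb{K}_p = \mb{f}_{p'}^\perp$, and taking a further annihilator together with the bi-annihilator identity $M^{\perp\perp} = M$ for the closed subspaces $\mb{k}_{p'}$ and $\mb{f}_{p'}$ yields $\mb{F}_p^\perp = \mb{k}_{p'}$ and $\mb{K}_p^\perp = \mb{f}_{p'}$. Substituting produces exactly $\ell^{p'} E = \srl{\mb{k}_{p'} + \mb{f}_{p'}}^*$.

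I do not expect a serious obstacle here; the content is almost entirely bookkeeping with annihilators, already carried out in the $c_0$ setting. The only points deserving a moment's care are that $\mb{F}_p \cap \mb{K}_p$ is closed (so the annihilator manipulations are legitimate) and that $\mb{k}_{p'}$ and $\mb{f}_{p'}$ are closed by definition (each being a closure), which is what licenses $M^{\perp\perp} = M$. Because $\ell^p E$ is reflexive, the $\srl{\cdot}^*$ could be replaced throughout by an ordinary norm closure; I would keep the weak\(^*\) notation only for uniformity with Lemmas \ref{tanni-l} and \ref{tduallp-l}.
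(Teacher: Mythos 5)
Your proof is correct and is exactly the argument the paper intends: the paper gives no separate proof of Lemma \ref{tannilp-l}, stating only that it is the analogue of Lemma \ref{tanni-l} proved via Lemma \ref{tduallp-l}, which is precisely the reduction you carry out (with the simplification, which you rightly note, that reflexivity makes $\mb{F}_p^\perp=\mb{k}_{p'}$ and $\mb{K}_p^\perp=\mb{f}_{p'}$ exact with no weak$^*$-closure subtleties).
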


\begin{rmk}
If a group has a presentation with generators $s_1, \ldots s_n$ none of which are of order 2 and relations $t_1^{n_1} t_2^{n_2}\cdots t_k^{n_k}$ so that $t_i \in S$ and (for each relator) $\sum n_i = 0$, then $\mb{k}_1$ and $\mb{f}_1$ both lie in a weak$^*$ closed subspace of $\ell^1E$. 
Indeed, orient the edges so that $s_i$ is the positive direction (so $s_i^{-1}$ is the negative direction). Then $\nabla \delta_x$ has zero sum. Also, for any cycle $c$, $\nabla_2^* c$ has exactly the same number of positive and negative edges. Hence $\nabla_2^* c$ also has zero sum. 
This implies that $\mb{k}_1 + \mb{f}_1 \subset \ell^1_0E \subsetneq \ell^1E$.

Note that it is obvious that such groups have non-constant Lipschitz harmonic functions.
Indeed, they have an infinite Abelianisation: a surjection $\Gm \surj \zz$ is given by sending each generator to $1 \in \zz$.
\end{rmk}

\subsection{Liouville property}\label{ssliou}

A graph $G$ is called \textbf{Liouville} if there are no non-constant bounded harmonic functions, \ie $(\nabla \ell^\infty V) \cap \mb{F}_\infty = \{0\}$.
\begin{rmk}\label{rliou}
It is not clear that there is an equivalence between  ``$G$ is Liouville'' and ``$\mb{F}_\infty \cap \mb{k}_\infty = \{0\}$''. Indeed, as was shown in Proposition \ref{tc0triv-p}, $\mb{k}_\infty$ contains much more than $\nabla \ell^\infty V$.
See Questions \ref{qisom} and \ref{qbdd}.
\end{rmk}

The following characterisation (due to V. Kaimanovich) will be useful for our upcoming purposes.
In order to shorten the notation, let $\mb{D}_1 = \srl{\Delta \ell^1V}^{\ell^1V}$ be the closure of the image of the Laplacian.
Also, recall that a graph is called Liouville if it has no non-constant harmonic functions.

The formulation which will be interesting here is a characterisation of the Liouville property in terms of exit distributions.
For $A \subset V$, denote by $\delta A$ the set of vertices in $A^\comp$ which are neighbours to some vertex of $A$. 
Given a finite connected subset $A$ and a vertex $v \in A$, define the exit probability $\mr{ex}_v^A: \delta A \to [0,1]$ by $\mr{ex}_v^A(x)$ is the probability that a simple random walk starting at $v$ lands in $x$ the first time it exits the set $A$.

It turns out there is another way of obtaining this probability measure.
Take $P_A$ to be the random walk operator ``stopping outside $A$'' (it's a Markovian operator). 
It is defined on the Dirac mass basis of $\ell^1V$ as follows: $P_A \delta_x = \tfrac{1}{d} \un_{ \{ y \mid y \sim x\} }$ if $x \in A$  and $P_A \delta_x = \delta_x$ if $x \notin A$. 
In other words, the random walker only walks as long as it is in $A$ and stays where he is once he has left $A$.

The author learned about the following result on Math Overflow, thanks to V.~Beffara (see \cite{Beff-MO}). This theorem is a particular case of a result of V.~Kaimanovich (see \cite[Theorem 2.6]{Kai})
\begin{teo}\label{tliouex-t}
$G$ is Liouville if and only if the following holds. For any increasing sequence of finite connected sets $A_n$ so that $\cup A_n = V$ and any vertices $v,w \in V$, 
\[
\lim_{n \to \infty} \| \mr{ex}_v^{A_n} - \mr{ex}_w^{A_n} \|_{\ell^1} =0.
\]
\end{teo}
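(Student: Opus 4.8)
The plan is to connect exit distributions to the mean-value property of bounded harmonic functions via optional stopping. For a finite connected set $A$ and a bounded harmonic function $h$, the exit time $\tau$ from $A$ is almost surely finite (since $A$ is finite and $G$ is connected), so the bounded martingale $k \mapsto h(X_{k \wedge \tau})$ yields the identity
\[
h(v) = \sum_{x \in \delta A} \mr{ex}_v^A(x)\, h(x) \qquad \text{for every } v \in A.
\]
I would first record this identity (it is just optional stopping applied to the first exit time), as it drives both directions.

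For the direction ``convergence implies Liouville'', suppose the exit distributions converge. Given any bounded harmonic $h$ and vertices $v,w$, for $n$ large enough that $v,w \in A_n$ I subtract the two mean-value identities to get
\[
|h(v) - h(w)| = \Big| \sum_{x \in \delta A_n} \big( \mr{ex}_v^{A_n}(x) - \mr{ex}_w^{A_n}(x) \big)\, h(x) \Big| \leq \|h\|_{\ell^\infty} \, \| \mr{ex}_v^{A_n} - \mr{ex}_w^{A_n} \|_{\ell^1},
\]
which tends to $0$. Hence $h(v) = h(w)$ for all $v,w$, so $h$ is constant and $G$ is Liouville.

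For the converse I argue by contraposition. Assuming the convergence fails, there are vertices $v,w$, an exhausting sequence $A_n$, and $\eps > 0$ with $\| \mr{ex}_v^{A_n} - \mr{ex}_w^{A_n} \|_{\ell^1} \geq \eps$ along a subsequence. Taking $\phi_n = \sgn(\mr{ex}_v^{A_n} - \mr{ex}_w^{A_n})$ on $\delta A_n$ (so $\|\phi_n\|_{\ell^\infty} \leq 1$) and solving the Dirichlet problem produces $h_n$, harmonic on $A_n$ with boundary values $\phi_n$; the maximum principle gives $\|h_n\|_{\ell^\infty} \leq 1$, while the mean-value identity gives $h_n(v) - h_n(w) = \| \mr{ex}_v^{A_n} - \mr{ex}_w^{A_n} \|_{\ell^1} \geq \eps$. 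Since the $h_n$ are uniformly bounded and the $A_n$ exhaust $V$, a diagonal (pointwise compactness) argument extracts a subsequence converging pointwise to some $h : V \to [-1,1]$. For each fixed $u$, the relation $\Delta h_n(u) = 0$ holds for all large $n$ (all neighbours of $u$ eventually lie in $A_n \cup \delta A_n$), and passing to the limit of the finitely many values involved gives $\Delta h(u) = 0$, so $h$ is a bounded harmonic function. Finally $h(v) - h(w) \geq \eps > 0$ shows $h$ is non-constant, contradicting the Liouville property.

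I expect the main obstacle to be this last compactness step: one must verify that the pointwise limit of the finite-domain Dirichlet solutions is genuinely harmonic on all of $V$ and that the separation $h(v) - h(w) \geq \eps$ survives the passage to the limit rather than collapsing. Making the optional-stopping identity precise (finiteness of the exit time and validity of the martingale argument with a finite stopping region on an infinite graph) is routine but should be stated cleanly.
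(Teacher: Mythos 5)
Your proposal is correct and follows essentially the same route as the paper: the forward direction is the mean-value identity $h(v)=\pgen{h\mid \mr{ex}_v^{A_n}}$ combined with the $\ell^1$-bound, and the converse builds a non-constant bounded harmonic function as a pointwise (diagonal) limit of Dirichlet solutions whose boundary data is the sign of $\mr{ex}_v^{A_n}-\mr{ex}_w^{A_n}$. The only cosmetic difference is that the paper packages the boundary data into a single global function $g\in\ell^\infty V$ (after passing to a subsequence) rather than choosing $\phi_n$ separately for each $n$, which is the same argument.
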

Note that the first few terms of the sequence might not be defined, but there is a $N$ so that, for $n>N$, they all are defined. 
Also one could replace the $\lim$ by a $\liminf$ without changing anything.

\subsection{Isoperimetry}\label{ssisop}

\newcommand{\IS}[0]{\mathrm{IS}}

\newcommand{\Gc}[0]{\widetilde{\jo{G}}}
\newcommand{\Go}[0]{\jo{G}_0}
\newcommand{\Gd}[0]{\jo{G}^{\scalebox{.4}{$\searrow$}}}

This section is a short review of isoperometry on groups and graphs. Woess' book offers a much more complete reference \cite{Woe}; see also a survey by Pittet \& Saloff-Coste \cite{PSC-survey} or Coulhon \& Saloff-Coste paper \cite{CSC}.

The \textbf{isoperimetric ratio function} is the function $\jo{F}: \zz_{>0} \to \zz_{\geq 0}$ defined to be the largest function so that $\jo{F}(|F|) \leq |\del F|$, \ie $\jo{F}(x) = \inf \{ |\del F| \, \mid \, |F|= x\}$.
The \textbf{isoperimetric ratio function} is the function $\jo{G}: \zz_{> 0} \to \rr_{\geq 0}$ defined by $\jo{G}(x) := x^{-1}\jo{F}(x) = \inf \{ \frac{|\del F|}{|F|} \, \mid \, |F|= x\}$.
Be aware that some texts use ``isoperimetric ratio function'' in place of ``isoperimetric ratio function''
The (decreasing) function $\Gd(x) := \inf \{ \frac{|\del F|}{|F|} \, \mid \, |F|\leq  x\}$ derived from $\jo{G}$ will also come in handy.

Looking at the decreasing function $\Gd$ instead of $\jo{G}$ is not much of thing.
Recall that $\jo{F}$ is subadditive: $\jo{F}(a+b) \leq \jo{F}(a) + \jo{F}(b)$. Hence, up to a minor change (taking the concave hull of a subadditive function changes it by at most a factor $2$), $\jo{F}$ is concave.
If $f$ is concave, then $x^{-1} f(x)$ is decreasing.

A graph has a ... \\
\begin{tabular}{@{\textbullet\;}llll}
strong isoperimetric ratio& ($\IS_\omega$) & if $\exists K >0$ so that & $\jo{G}(x) \geq K $. \\
$d$-dimensional isoperimetric ratio&($\IS_d$) &if $\exists K >0$ so that &$\jo{G}(x) \geq \frac{K}{x^{1/d}}$. \\
\end{tabular}

A strong isoperimetric ratio is equivalent to the statement that $\nabla$ has a closed image. For $\ell^1$ this is direct from the definition, and it is classical that it is equivalent for any $p \in [1;+\infty[$.  $\nabla$ does not have closed image in $c_0$ (and consequently in $\ell^\infty$).

For groups amenability is the negation of $\IS_\omega$. However any group which is not of polynomial growth but is amenable will satisfy $\IS_d$ for any $d$ (see for example \cite{CSC}).

\section{Applications to $\hdp$} \label{shdp}

\subsection{Liouville graphs and direct products} \label{sslioudir}

Recall that Liouville is ``$\mb{F}_\infty \cap \nabla \ell^\infty V = \{0\}$'' and $\hdp$ is, by definition (or an analogue of Lemma \ref{tdefharm-l}), equivalent to ``$\mb{K}_p \cap \mb{F}_p =\{0\}$'' (i.e. no non-constant harmonic functions with gradient in $\ell^p$).
As $\mb{k}_p \subsetneq \nabla \ell^\infty V$, it is not trivial that Liouville implies $\hdp$ for all $p \in [1,\infty[$.
Theorem \ref{thdp-t} below was already proven in \cite[Theorem 1.2 or Corollary 3.14]{Go}, but the following proof is more direct (and avoids $\ell^p$-cohomology; see also \cite{Go-frontier} for a streamlined proof using $\ell^p$-cohomology).
An ingredient which will come in the proof is that of a \textbf{transport pattern} from $\xi$ to $\phi$ (where $\xi$ and $\phi$ are measures). 
This is a finitely supported function $\tau:E \to \rr$ so that $\nabla^* \tau = \phi - \xi$. 

Note that for any $h \in \ell^1E$, $\nabla^* h =: \pi$ can be decomposed into two positive measures $\pi_\pm \in \ell^1V$ so that $\pi = \pi_+ - \pi_-$ and $\|\pi_+\|_{\ell^1} = \|\pi_-\|_{\ell^1}$.
This follows from the fact that the image of $\nabla^*$ is contained in the functions summing to 0.
For any such $\pi = \pi_+ - \pi_-$ of finite support, 
let $TP(\pi)$ be the set of all (finitely supported) transport patterns with $\nabla^* \tau = \pi$.
\begin{lem}
Let $h$ be a finitely supported function on the edges. The norm $h$ in the quotient $\ell^pE / \mb{f}_p$ is $\inff{\tau \in TP(\nabla^*h)} \|\tau\|_{\ell^p}$ 
\end{lem}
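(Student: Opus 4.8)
The plan is to identify the quotient norm $\|h\|_{\ell^pE/\mb{f}_p}$ with a distance-to-subspace and then show that the relevant affine subspace is exactly $TP(\nabla^*h)$. Recall that by definition $\|h\|_{\ell^pE/\mb{f}_p} = \inf\{\|h - g\|_{\ell^p} \mid g \in \mb{f}_p\}$. So the heart of the matter is to understand which $h' \in \ell^pE$ (finitely supported) differ from $h$ by an element of $\mb{f}_p = \srl{\nabla_2^* \ell^0C}^{\ell^pE}$, and to match this up with the condition $\nabla^* h' = \nabla^* h$.

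First I would establish the inclusion that is essentially definitional. If $\tau \in TP(\nabla^*h)$, then $\nabla^*\tau = \nabla^*h = \pi$, so $\nabla^*(h-\tau) = 0$, meaning $h - \tau \in \ker \nabla^* = \mb{F}_p$. Conversely any $\tau$ with $h - \tau \in \mb{F}_p$ satisfies $\nabla^*\tau = \pi$, i.e. $\tau$ is a (not necessarily finitely supported) transport pattern. Thus for finitely supported $\tau$, $\tau \in TP(\pi)$ is equivalent to $h - \tau \in \mb{F}_p \cap \ell^0E$. Since we are quotienting by $\mb{f}_p$ rather than $\mb{F}_p$, the key identification I would invoke is $\mb{f}_p = \mb{F}_p$, which follows from Lemma \ref{tduallp-l} (it gives $\mb{F}_p = \mb{k}_{p'}^\perp$ and $\mb{f}_p = \mb{K}_{p'}^\perp$, and these coincide once one knows $\srl{\mb{k}_{p'}} = \srl{\mb{K}_{p'}}$ in the reflexive range; alternatively the remark after Lemma \ref{tduallp-l} notes directly that $\mb{f}_1 = \mb{F}_1$ and the reflexive argument is cleaner). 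Granting $\mb{f}_p = \mb{F}_p$, the two infima
\[
\inf\{\|h - g\|_{\ell^p} \mid g \in \mb{f}_p\} \qquad\text{and}\qquad \inf\{\|\tau\|_{\ell^p} \mid \tau \in TP(\pi)\}
\]
are infima of the same quantity $\|h-g\|$ over $g$ ranging, respectively, over all of $\mb{f}_p = \mb{F}_p$ and over the finitely supported elements of $\mb{F}_p$, after the substitution $\tau = h - g$.

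The one genuine issue is that $TP(\pi)$ consists only of \emph{finitely supported} transport patterns, whereas the quotient norm ranges over all $g \in \mb{f}_p$ (equivalently all $\tau = h-g \in h + \mb{F}_p$, with no support restriction). So I must argue that restricting to finitely supported $\tau$ does not change the infimum. This is the main obstacle, and I would handle it by a density/truncation argument: given any $\tau \in h + \mb{F}_p$ with $\|\tau\|_{\ell^p}$ close to the infimum, approximate $\tau$ in $\ell^p$-norm by a finitely supported $\tau_n$ while preserving the constraint $\nabla^*\tau_n = \pi$ exactly. Since $\pi$ has finite support, one can take $\tau_n$ to agree with $\tau$ on a large finite edge set and then correct the (small) boundary discrepancy in $\nabla^*$ by adding a finitely supported flow supported near the boundary; because $\nabla^*$ is bounded and $\tau$ tends to $0$ at infinity, this correction has $\ell^p$-norm tending to $0$. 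This shows the finitely-supported infimum is not larger; the reverse inequality is immediate since finitely supported transport patterns are in particular admissible competitors for the quotient norm. Combining both inequalities yields the claimed equality $\|h\|_{\ell^pE/\mb{f}_p} = \inf_{\tau \in TP(\nabla^*h)} \|\tau\|_{\ell^p}$.
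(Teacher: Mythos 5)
Your overall framing (the quotient norm as a distance to a coset, with the substitution $\tau = h-g$) is the right one, but the pivot on which you hang the proof, namely the identification $\mb{f}_p = \mb{F}_p$, is false in general, and the truncation argument you propose cannot repair this. Lemma \ref{tduallp-l} only gives $\mb{f}_p = \mb{K}_{p'}^\perp \subseteq \mb{k}_{p'}^\perp = \mb{F}_p$, and the inclusion is strict exactly when $\srl{\mb{k}_{p'}} \neq \mb{K}_{p'}$, i.e.\ when the reduced $\ell^{p'}$-cohomology in degree one is non-trivial. This happens for instance on a regular tree: there $\mb{f}_p = \{0\}$ (there are no cycles at all), while for $p>1$ the space $\mb{F}_p$ contains flows to infinity of finite $\ell^p$-norm (compare Remark \ref{rnabla2}). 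The same example shows that your truncation step fails: $TP(\nabla^* h)$ on a tree is a single point (two finitely supported transport patterns differ by a finitely supported divergence-free function, and on a tree the only such function is $0$), so truncating a flow-to-infinity representative of $h+\mb{F}_p$ and ``correcting the boundary discrepancy'' necessarily brings you back to that one fixed pattern; the correction carries total mass bounded away from $0$ over distances tending to infinity, and its $\ell^p$-norm does not tend to $0$. Equivalently, $\ker\nabla^* \cap \ell^0E$ is not dense in $\mb{F}_p$, which is precisely the density your argument requires.

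The repair is that you never needed $\mb{F}_p$. Since $h$ and the elements of $TP(\nabla^* h)$ are finitely supported, the relevant set is $\ker\nabla^* \cap \ell^0E$, not all of $\ker\nabla^*$: a \emph{finitely supported} divergence-free function decomposes as a finite combination of oriented cycles (flow decomposition), so $\ker\nabla^*\cap\ell^0E = \nabla_2^*\ell^0C$ and hence $TP(\nabla^* h) = h + \nabla_2^*\ell^0C$. The closure of this coset in $\ell^pE$ is $h+\mb{f}_p$ by the very definition of $\mb{f}_p$, and the infimum of the norm over a set equals the infimum over its closure, which is the quotient norm. This is the paper's (one-line) proof; your detour through $\mb{F}_p$ introduces a gap that the direct route avoids entirely.
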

\begin{proof}
By definition, the closure (in $\ell^pE$) of the set $TP(\nabla^* h)$ equals $h + \mb{f}_p$. The infimal norm of such an element is the quotient norm.
\end{proof}

In the particular case $p=1$, the quotient norm $\ell^1E / \mb{F}_1$ is the Wasserstein distance between $p_+$ and $p_-$.
Transport patterns will be a combination of [the characteristic function of oriented] geodesic paths between the support of $p_+$ and the support of $p_-$.
Still for $p=1$, the optimal transport patterns are automatically finitely supported if $p$ is. r

For $p>1$, the quotient norm $\ell^pE / \mb{f}_p$ has nothing to do with the $p$-Wasserstein distance. 
For example, when $p=2$, it expresses the energy of a current flowing between sinks and sources. 
The infimum is probably never realised by a finitely supported transport pattern.

\begin{rmk}\label{rtra}
It is quite important to point out that one can essentially always assume that $\|\tau\|_{\ell^\infty} \leq \|\pi_\pm\|_{\ell^1} = \tfrac{1}{2} \|\pi\|_{\ell^1}$.
Indeed, if any transport pattern $\tau$ (from $\pi_-$ to $\pi_+$) is such that $\|\tau\|_{\ell^\infty} > \|\pi_-\|_{\ell^1}$ then it means some mass is transported twice along some edge.
This implies that the said mass is transported along a cycle (something which is definitively not very efficient).
Hence there is another transport pattern $\tau'$ where this [useless from the view of transport] cycle is avoided and $ |\tau'| \leq |\tau|$ (point-wise). 
\end{rmk}

\begin{lem}\label{tlemwkst}
Let $G$ be $D$-regular graph. If for any neighbours $v$ and $w$ there is an increasing and exhausting sequence of sets $A_n$ and a sequence of transport patterns $\tau_n$ from $\mr{ex}_v^{A_n}$ to $\mr{ex}_w^{A_n}$ so that $\tau_n$ tends weak$^*$ to 0 in $\ell^{p'}E$, then $G$ has $\hdp$.
\end{lem}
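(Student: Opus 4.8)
The plan is to take an arbitrary function $h:V \to \rr$ which is harmonic with $\nabla h \in \ell^p E$ and show that it must be constant; by the reformulation recalled at the start of \S\ref{sslioudir}, this is exactly the statement that $G$ has $\hdp$ (i.e. $\mb{K}_p \cap \mb{F}_p = \{0\}$). The whole argument consists in pairing $\nabla h$ against the transport patterns $\tau_n$ and noticing that, because each $\tau_n$ is anchored to the exit distributions at $v$ and $w$, this pairing is forced to equal $h(w)-h(v)$ for every $n$, while weak$^*$ convergence forces it to tend to $0$.

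The first step I would establish is a mean value property: for every harmonic $h$ and every finite connected $A \ni v$, one has $\pgen{h \mid \mr{ex}_v^A} = h(v)$. Since I do not assume $h$ bounded, I would prove this through the stopped operator $P_A$ of \S\ref{sslapl1}. Harmonicity gives $\pgen{h \mid P_A \mu} = \pgen{h \mid \mu}$ for every finitely supported measure $\mu$ (a Dirac at a point of $A$ is replaced by the average of $h$ over its neighbours, which equals $h$ there, and Diracs outside $A$ are fixed), so $\pgen{h \mid P_A^i \delta_v} = h(v)$ for all $i$. Letting $i \to \infty$ and using that $P_A^i \delta_v \to \mr{ex}_v^A$, a limit supported on the finite set $\delta A$, then yields the identity.

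Next I would compute the pairing of interest. As $\tau_n$ is finitely supported, the adjoint identity $\pgen{\tau_n \mid \nabla h} = \pgen{\nabla^* \tau_n \mid h}$ holds as an equality of finite sums, and by the definition of a transport pattern $\nabla^* \tau_n = \mr{ex}_w^{A_n} - \mr{ex}_v^{A_n}$. Hence, for every $n$ large enough that $v,w \in A_n$, the previous step gives
\[
\pgen{\tau_n \mid \nabla h} = \pgen{\mr{ex}_w^{A_n} \mid h} - \pgen{\mr{ex}_v^{A_n} \mid h} = h(w) - h(v),
\]
a constant independent of $n$. On the other hand, $\nabla h \in \ell^p E$ is an element of the predual of $\ell^{p'}E = (\ell^p E)^*$, and $\tau_n \to 0$ weak$^*$ there, so $\pgen{\tau_n \mid \nabla h} \to 0$. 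Comparing the two statements forces $h(w) = h(v)$. Since the hypothesis supplies such sequences for every pair of neighbours and $G$ is connected, $h$ is constant, so $\nabla h = 0$ and $\mb{K}_p \cap \mb{F}_p = \{0\}$.

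The one point requiring genuine care — the main obstacle — is the passage to the limit $i \to \infty$ in $\pgen{h \mid P_A^i \delta_v}$ for a possibly unbounded $h$: the functional $\pgen{h \mid \cdot}$ is not $\ell^1$-continuous in general, and it is precisely the finiteness of $A$ (hence of $\delta A$ and of the supports of all the measures $P_A^i \delta_v$, on which $h$ is bounded) that rescues continuity and legitimises the limit. Every other step — the adjoint identity and the weak$^*$ limit — is formal once one has observed that the patterns $\tau_n$ are finitely supported, which is guaranteed by the definition of a transport pattern.
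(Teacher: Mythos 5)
Your proof is correct, but it runs in the opposite direction to the paper's. The paper argues dually: it invokes Lemma \ref{tannilp-l} and shows that each Dirac mass $\delta_{v,w}$ lies in $\srl{\mb{k}_{p'}+\mb{f}_{p'}}^*$, by writing $\delta_{v,w} = \nabla f_n - \tau_n - c_n$ with $f_n = \sum_{i\geq 0} P_{A_n}^i \nabla^*\delta_{v,w}$ finitely supported (so $\nabla f_n \in \mb{k}_{p'}$) and $c_n$ a finitely supported element of $\ker\nabla^*$ (hence in $\mb{f}_{p'}$), and then letting $\tau_n \rightharpoonup 0$. You instead take a harmonic $h$ with $\nabla h \in \ell^pE$ and test it directly against $\tau_n$: the mean value identity $\pgen{h \mid \mr{ex}_v^{A_n}} = h(v)$, proved via the stopped operator $P_{A_n}$ exactly as in the $(\Leftarrow)$ half of Theorem \ref{tliouex-t} (and you rightly flag that finiteness of $A_n$ is what licenses the limit for unbounded $h$), pins the pairing $\pgen{\tau_n \mid \nabla h}$ at $h(w)-h(v)$ while weak$^*$ convergence sends it to $0$. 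The two arguments are Banach-space duals of one another -- your pairing is precisely the functional that witnesses the annihilator computation -- but yours is more elementary and self-contained: it bypasses the annihilator formalism of \S{}2.3 and the small unstated point that finitely supported elements of $\ker\nabla^*$ belong to $\mb{f}_{p'}$. What the paper's formulation buys in exchange is the reusable normal form ``$\delta_{v,w} \in \srl{\mb{k}_{p'}+\mb{f}_{p'}}^*$'', which is the shape in which the conclusion is fed into the rest of \S{}\ref{shdp} (e.g.\ Theorem \ref{ttranindiso-t}) and which keeps the link with reduced $\ell^p$-cohomology explicit.
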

\begin{proof}
The aim is to show that $\srl{\mb{k}_{p'}+\mb{f}_{p'}}^*$ contains all the Dirac masses $\delta_{v,w}$ of $\ell^{p'}E$. By density of the Dirac masses, it is then equal to $\ell^{p'}E$. 

To do so consider $f_n = \sum_{i \geq 0} P_{A_n}^i \nabla^* \delta_{v,w}$. 
$f_n$ is a function with bounded support, so $\nabla f_n \in \mb{k}_{p'}$.
Note that $\nabla^* (\nabla f_n - \delta_{v,w}) = \mr{ex}_w^{A_n} - \mr{ex}_v^{A_n}$.
By the property of the transport patterns $\tau_n$, $\nabla f_n - \delta_{v,w}$ belongs to the weak$^*$ closure of $\mb{f}_{p'}$.
\end{proof}

\begin{lem}\label{tlemtra}
Let $G$ be $D$-regular graph with $\IS_d$ for some $d>2$ and let $p <d/2$. 
Then for any neighbours $v$ and $w$ and any increasing and exhausting sequence of sets $A_n$ there is a sequence of transport patterns $\tau_n$ from $\mr{ex}_v^{A_n}$ to $\mr{ex}_w^{A_n}$ so that $\|\tau_n\|_{\ell^{p'}E}$ is bounded.
\end{lem}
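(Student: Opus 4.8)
The plan is to construct $\tau_n$ from the Green function of the walk killed on leaving $A_n$, and to control its norm through the on-diagonal heat-kernel decay furnished by $\IS_d$; the exponent $p<d/2$ will appear exactly as the convergence threshold of a series, so no off-diagonal estimate is needed.

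First I would fix $n$ large enough that $v,w\in A_n$ and let $u_v$ be the function supported on $A_n$ whose value at $x$ is the expected number of visits to $x$ of the simple random walk started at $v$ before it leaves $A_n$ (the Dirichlet Green function of $A_n$). A one-step computation, using that $u_v$ vanishes on $\del A_n$, gives the potential-theoretic identity $\Delta u_v=\delta_v-\mr{ex}_v^{A_n}$: on $A_n$ one has $(I-P)u_v=\delta_v$, while for $x\in\del A_n$ one computes $\Delta u_v(x)=-\tfrac1D\sum_{y\sim x}u_v(y)=-\mr{ex}_v^{A_n}(x)$ directly from the definition of the exit distribution. Since $\nabla^*\nabla=D\Delta$, the finitely supported flow $\sigma_v:=\tfrac1D\nabla u_v$ satisfies $\nabla^*\sigma_v=\delta_v-\mr{ex}_v^{A_n}$, and similarly for $w$. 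Then $\tau_n:=\sigma_v-\sigma_w+\un_{(v,w)}$, with $\un_{(v,w)}$ the unit flow across the edge joining $v$ and $w$ (normalised so that $\nabla^*\un_{(v,w)}=\delta_w-\delta_v$), is a transport pattern from $\mr{ex}_v^{A_n}$ to $\mr{ex}_w^{A_n}$, because $\nabla^*\tau_n=\mr{ex}_w^{A_n}-\mr{ex}_v^{A_n}$.

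Next I would reduce the norm bound to a bound on $u_v$ alone. As $u_v\ge0$ and each vertex meets at most $D$ edges, the crude inequality $|u_v(x)-u_v(y)|\le u_v(x)+u_v(y)$ gives $\|\nabla u_v\|_{\ell^{p'}E}\le 2D^{1/p'}\|u_v\|_{\ell^{p'}V}$, so $\|\tau_n\|_{\ell^{p'}E}\le 2D^{1/p'-1}\big(\|u_v\|_{\ell^{p'}V}+\|u_w\|_{\ell^{p'}V}\big)+1$. By monotonicity of Green functions under enlarging the domain, $0\le u_v\le \ms{G}(\cdot,v)$, where $\ms{G}(x,v):=\sum_{n\ge0}p_n(x,v)$ and $p_n(x,v):=P^n\delta_v(x)$, so it suffices to show $\ms{G}(\cdot,v)\in\ell^{p'}V$; this bound does not depend on $n$ at all.

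The hard part is this last point, and it is exactly where $p<d/2$ enters. From $\IS_d$ one has the classical on-diagonal heat-kernel bound, hence by Cauchy--Schwarz the uniform bound $p_n(x,v)\le Cn^{-d/2}$ (see e.g. Woess \cite{Woe}). Since $\|p_n(\cdot,v)\|_{\ell^1}=1$, the elementary interpolation $\|f\|_{p'}\le\|f\|_\infty^{1/p}$ for $f\ge0$ with $\|f\|_1=1$ yields $\|p_n(\cdot,v)\|_{\ell^{p'}}\le(Cn^{-d/2})^{1/p}$, whence
\[
\|\ms{G}(\cdot,v)\|_{\ell^{p'}}\le\sum_{n\ge0}\|p_n(\cdot,v)\|_{\ell^{p'}}\le 1+C^{1/p}\sum_{n\ge1}n^{-d/(2p)},
\]
and this series converges precisely because $d/(2p)>1$, i.e. $p<d/2$. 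I expect the genuine obstacle to be this uniformity in $n$: one has no off-diagonal Green function estimate at hand for a general $D$-regular graph satisfying only $\IS_d$, so the bound must instead be extracted from the interplay between the on-diagonal decay $n^{-d/2}$ and the probabilistic normalisation $\|p_n(\cdot,v)\|_{\ell^1}=1$ through the $\ell^{p'}$-interpolation, which simultaneously produces the bound and the sharp exponent.
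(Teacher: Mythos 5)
Your proof is correct and is in essence the paper's own argument: the paper builds $\tau_n$ by concatenating one ``random step'' transport for each application of $P_{A_n}$, which when summed over all steps is exactly (up to boundary edges) your flow $\tfrac1D\nabla u_v$ coming from the killed Green function, and both proofs reduce the norm bound to the convergence of $\sum_{i\ge 0}\|P^i\delta_v\|_{\ell^{p'}}$ via the on-diagonal decay $\|P^i\delta_v\|_{\ell^\infty}\leq K i^{-d/2}$ from $\IS_d$ interpolated against $\|P^i\delta_v\|_{\ell^1}=1$, with $p<d/2$ appearing as the same convergence threshold $d/(2p)>1$. The only cosmetic difference is that you package the concatenation as a single potential-theoretic object, which makes the identity $\nabla^*\tau_n=\mr{ex}_w^{A_n}-\mr{ex}_v^{A_n}$ slightly cleaner to verify.
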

\begin{proof}
The transport pattern $\tau_n$ presented here are extremely inefficient.
Construct $\tau_n$ as a concatenation of many transport patterns.
The first series of transport pattern bring the mass from $\mr{ex}_v^{A_n}$ to $\delta_v$. 
Note that the transport between $P^i_{A_n} \delta_y$ and $P^{i+1}_{A_n}\delta_y$ can be achieved by just making a random step (for the mass lying within $A_n$, the rest does not move): at every given vertex, split and move the mass evenly to all the neighbours. The $\ell^p$-norm of this ``random step'' transport is $\leq \| P^n_{A_n} \delta_y\|_{\ell^{p'}}$

So transport from $\delta_v$ to $\mr{ex}_v^{A_n}$ by this method.
This transport moves less mass than the transport from $P^i \delta_x$ to $P^{i+1} \delta_x$ (because one only moves what lies inside $A$).
Hence, the $\ell^{p'}$-norm (for the transport from $\delta_v$ to $\mr{ex}_v^{A_n}$) is bounded by $\sum_{i \geq 0} \| P^i \delta_y\|_{\ell^{p'}}$.

Using $\IS_d$, one has that $\|P^i \delta_y \|_{\ell^\infty} \leq K n^{-d/2}$ (for some $K>0$). 
A simple interpolation 
(one also has $\|P^i \delta_y\|_{\ell^1}=1$, and for any $f \in \ell^1\nn$, $\| f \|_q^q \leq \|f\|_1 \|f\|_\infty^{q-1}$) 
gives an upper bound on the $\ell^{p'}$-norm. 
So the $\ell^{p'}$-norm of this [very inefficient] transport pattern from $\mr{ex}_v^{A_n}$ to $\delta_v$ is bounded by $\sum_{i=0}^{\infty} \| P^n \delta_y \|_{\ell^{p'}}$.
It turns out that the series converges when $d > 2p$. 
The value of the series only depends on the constant in the profile $\IS_d$.

A similar transport pattern bring the mass from $\mr{ex}_w^{A_n}$ to $\delta_w$.
A last (and very obvious) transport pattern can bring the mass from $\delta_v$ to $\delta_w$.
By combining these three transport patterns, one gets $\tau_n$ and the $\ell^{p'}$-norm of $\tau_n$ is bounded (independently of $n$, $A_n$, $v$ and $w$).
\end{proof}

\begin{teo}\label{thdp-t}
Assume $G$ is $D$-regular Liouville graph with $\IS_d$ for some $d>2$ and let $p <d/2$.
Then $G$ has $\hdp$.
\end{teo}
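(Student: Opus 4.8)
The plan is to verify the hypothesis of Lemma \ref{tlemwkst}: for each pair of neighbours $v,w$ and each increasing exhausting sequence $A_n$, I must produce transport patterns $\tau_n$ from $\mr{ex}_v^{A_n}$ to $\mr{ex}_w^{A_n}$ with $\tau_n \to 0$ weak$^*$ in $\ell^{p'}E$. Lemma \ref{tlemtra} already provides such patterns with $\|\tau_n\|_{\ell^{p'}E}$ merely bounded (this is exactly where $\IS_d$ and $p<d/2$ enter, through the convergence of $\sum_i \|P^i\delta_y\|_{\ell^{p'}}$). These bounded patterns do not by themselves tend to $0$ weak$^*$: routing the full unit of mass of $\mr{ex}_v^{A_n}$ down to $\delta_v$, across to $\delta_w$, and back out to $\mr{ex}_w^{A_n}$ leaves an $O(1)$ amount of flow near the root for every $n$, so their weak$^*$ limit is a nonzero Green's-type flow. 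The Liouville hypothesis is precisely what removes this defect.

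First I would invoke Liouville through Theorem \ref{tliouex-t}: since $v$ and $w$ are in particular two fixed vertices, $\|\mr{ex}_v^{A_n} - \mr{ex}_w^{A_n}\|_{\ell^1} \to 0$. Write $\mr{ex}_v^{A_n} - \mr{ex}_w^{A_n} = \mu_n^+ - \mu_n^-$ for the positive and negative parts, so that $\mu_n^\pm$ are supported on $\delta A_n$ with total mass $\eps_n := \tfrac{1}{2}\|\mr{ex}_v^{A_n} - \mr{ex}_w^{A_n}\|_{\ell^1} \to 0$. The key observation is that one only needs to transport the difference: any finitely supported $\tau_n$ with $\nabla^* \tau_n = \mu_n^- - \mu_n^+ = \mr{ex}_w^{A_n} - \mr{ex}_v^{A_n}$ is a transport pattern from $\mr{ex}_v^{A_n}$ to $\mr{ex}_w^{A_n}$, so it suffices to transport $\mu_n^+$ (mass $\eps_n$) to $\mu_n^-$ (mass $\eps_n$). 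I would do this by the same inefficient route as in Lemma \ref{tlemtra}, namely $\mu_n^+ \donc \delta_v \donc \delta_w \donc \mu_n^-$, but carrying only the sub-measures. Since $\mu_n^+ \leq \mr{ex}_v^{A_n}$ and $\mu_n^- \leq \mr{ex}_w^{A_n}$, the outer two legs can be realized as pointwise sub-flows of the Green's-type flows of Lemma \ref{tlemtra}, and the middle leg is $\eps_n$ times a fixed finite pattern; hence $\|\tau_n\|_{\ell^{p'}E}$ stays bounded uniformly in $n$.

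It then remains to check that these $\tau_n$ tend to $0$ weak$^*$, for which (the sequence being bounded) it is enough that $\tau_n(e) \to 0$ on every fixed edge $e$. This holds for two complementary reasons: near the root the pattern funnels only the total mass $\eps_n \to 0$, so its values there are $O(\eps_n)$; and away from the root the supports of $\mu_n^\pm$, hence of $\tau_n$, escape to infinity as $\delta A_n$ exhausts $V$. Boundedness together with pointwise convergence to $0$ gives weak$^*$ convergence to $0$ in $\ell^{p'}E$ for every $p \in [1,d/2)$ (testing against $\ell^pE$, and using dominated convergence in the case $p'=\infty$, i.e. $p=1$). Lemma \ref{tlemwkst} then yields that $G$ has $\hdp$.

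The step I expect to be the main obstacle is the middle one: making the ``difference transport'' precise as a genuine finitely supported pattern that is simultaneously dominated in $\ell^{p'}E$ by Lemma \ref{tlemtra}'s flows (to keep the norm bound) and carries only mass $\eps_n$ through any fixed region (to force the pointwise decay). Concretely this amounts to realizing the passage from an exit distribution to its sub-measure $\mu_n^\pm$ as an edge-by-edge monotone thinning of the stopped walk $P_{A_n}$; once that is in place, everything else is bookkeeping with the estimates already established in Lemmas \ref{tlemwkst} and \ref{tlemtra}.
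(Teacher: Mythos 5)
Your proof is correct and follows essentially the same route as the paper: bounded patterns from Lemma \ref{tlemtra}, pointwise smallness extracted from the Liouville hypothesis via Theorem \ref{tliouex-t}, then Lemma \ref{tweakstar-l} and Lemma \ref{tlemwkst}. Your explicit thinning of the stopped walk so as to transport only the difference $\mu_n^+ - \mu_n^-$ is a constructive substitute for the paper's Remark \ref{rtra}, which obtains the same key bound $\|\tau'_n\|_{\ell^\infty} \leq \tfrac{1}{2}\|\mathrm{ex}_v^{A_n}-\mathrm{ex}_w^{A_n}\|_{\ell^1}$ by removing cycles from the pattern of Lemma \ref{tlemtra} while keeping it pointwise dominated.
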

\begin{proof}
By the Lemma \ref{tlemtra}, there is a plan $\tau_n$ which is bounded. 
However, this plan might not necessarily tend weak$^*$ to 0.
By Remark \ref{rtra}, one can check that there is a alternative plan $\tau'_n$ which is point-wise smaller than $\tau_n$ and so that $\| \tau'_n \|_{\ell^{\infty}} \leq \|\mr{ex}_v^{A_n}-\mr{ex}_w^{A_n}\|_{\ell_\infty}$. 
The latter implies $\tau'_n$ is bounded in norm 
(since $\| \tau'_n \|_{\ell^{p'}} \leq \| \tau_n \|_{\ell^{p'}}$
while the former together with a simpler form of Kaimanovich's 0-2 law (Theorem \ref{tliouex-t}) implies it tends to $0$ point-wise.
Boundedness together with point-wise convergence to $0$ is equivalent to weak$^*$ convergence (see Lemma \ref{tweakstar-l}).
The conclusion follows by Lemma \ref{tlemwkst}.
\end{proof}
In the proof of Theorem \ref{thdp-t}, it is unfortunately not possible to prove that $\tau'_n \to 0$ in the weak$^*$ topology on $\ell^1E$ (if so, it would imply the absence of non-constant harmonic functions with gradient in $c_0$).

Note that the previous result may also be found in \cite[Theorem 1.2]{Go} or \cite[Corollary 4.15]{Go-frontier}. By \cite[Theorem 5.12]{Go-mixing}, if this result can be applied to (the Cayley graph of) a subgroup which is not almost-malnormal, then it extends to the whole group.

Recall that if $G_1 = (X_1,E_2)$ and $G_2= (X_2,E_2)$ are graphs, their direct product is the graph $G$ with vertex set $X = X_1 \times X_2$ and the pair $(x_1,x_2)$ is a neighbour of $(y_1,y_2)$ if either $x_1 \sim x_2$ or $y_1 \sim y_2$ but not both.
It would be possible to use the techniques of this section to prove that direct products of graphs have $\hdp$ (a variant of  \cite[Proposition 2]{Go-cras}). However work of Amir, Gerasimova and Kozma
\cite{AGK} and Corollary \ref{tdirc0gp-c} contain stronger results. 

\begin{rmk}
As soon as a graph satisfies $\mr{IS}_d$ for $d > 2p'$, one has that $\srl{\mb{k}_p+ \mb{F}_p} = \ell^pE$. Indeed, under this hypothesis $\mf{G} \delta_x \in \ell^pV$ (here $\mf{G} = \sum_{i=0}^\infty P^i$ is Green's kernel) so that taking $H = \mf{G}(\delta_v - \delta_w)$ (for any neighbours $v \sim w$), one has
\[
\nabla H \in \mb{k}_p, \quad \nabla H - \delta_{v,w} \in \ker \nabla^* = \mb{F}_p.
\]
so $\delta_{v,w} \in \mb{k}_p + \mb{F}_p$. Taking closure one gets $\ell^pE$. See \cite[Lemma 4.13]{Go-frontier} for details.
\end{rmk}

\subsection{$\hdc$ and commuting elements}\label{ss-c0centr}

The main method of this section is to interpret the norm and the equivalence classes of elements from $\ell^p E/ \srl{\mb{k}_p + \mb{f}_p}$ and deduce the non-existence of harmonic functions with gradient conditions.

Recall that ``has $\hdc$'' is a shorthand for ``has no non-constant harmonic functions with gradient in $c_0$'' (like $\hdp$ but with $c_0$ in place of $\ell^p$).

Given a function $f \in \ell^p E$, recall that the norm of $f \in \ell^p E/\mb{f}_p$ is the norm of a transport pattern of $\nabla^* f$.
So it remains to interpret the contribution of $\mb{k}_p$. 
To this end consider the Dirac mass $\delta_v \in \ell^pV$. 
As an element of $\mb{k}_p$ it is sent to $\nabla \delta_v$.  
Hence its effect on the above norm is to change $\nabla^* f$ to $\nabla^* f + \nabla^* \nabla \delta_v$.

Seeing the function $\nabla^* f$ as pile of chips on each vertex,
this can be interpreted as ``firing up'' a vertex: [part of] the value of $\nabla^* f$ can be removed from a vertex and redistributed (equally) on all the neighbours. 
Note that this is possible even $\nabla^* f$ has value 0 at a vertex.
The random walk can be seen as a special case where one ``fires up'' (the full value at) all the vertices.

When looking at $\ell^1 E/ \srl{\mb{k}_1 + \mb{f}_1}^*$, note that it further suffices to show that there are bounded transport patterns that tend point-wise to 0 (see Lemma \ref{tweakstar-l} or the proof of Theorem \ref{thdp-t}).

\begin{lem}\label{texco-l}
Assume that for an exhausting and increasing sequence $A_n$ and for some  vertices $x$ and $y$, the $\ell^1$-transport distance of the $n^{\text{th}}$-step random walk distribution $P^n \delta_x$ and $P^n \delta_y$
is bounded independently of $n$.
Then any path from $x$ to $y$ belongs to $\srl{\mb{k}_1 + \mb{f}_1}^*$.
\end{lem}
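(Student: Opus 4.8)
The plan is to exhibit the path $\gamma$ -- viewed as the finitely supported element of $\ell^1 E$ with $\nabla^*\gamma = \delta_y - \delta_x$ -- as a weak$^*$ limit of elements of $\mb{k}_1 + \mb{f}_1$. Since any two paths from $x$ to $y$ differ by a sum of cycles, hence by an element of $\ker \nabla^* = \mb{f}_1$, it suffices to treat a single such $\gamma$; in fact the construction below works for any $\gamma \in \ell^1 E$ with $\nabla^*\gamma = \delta_y - \delta_x$.

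First I would use the ``firing'' interpretation discussed above to absorb the Dirac masses into $\mb{k}_1$. On a $d$-regular graph set $f_n = \tfrac{1}{d}\sum_{i=0}^{n-1} P^i(\delta_y - \delta_x)$ (for a general bounded-degree graph one inserts the degree weighting, using $\nabla^*\nabla\delta_v = \deg(v)(\delta_v - P\delta_v)$). Each $f_n$ is finitely supported, so $\nabla f_n \in \nabla \ell^0 V \subseteq \mb{k}_1$. Since $\nabla^*\nabla = d\Delta = d(I-P)$, the sum telescopes:
\[
\nabla^*\nabla f_n = (I - P^n)(\delta_y - \delta_x) = (\delta_y - \delta_x) - \pi_n, \qquad \pi_n := P^n\delta_y - P^n\delta_x .
\]
Consequently $\nabla^*(\gamma - \nabla f_n) = \pi_n$, i.e. $\gamma - \nabla f_n$ is itself a transport pattern for the difference of the $n$-step distributions.

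Next I would invoke the hypothesis, which provides transport patterns $\tau_n$ with $\nabla^*\tau_n = \pi_n$ and $\|\tau_n\|_{\ell^1}$ bounded independently of $n$. Then $\eta_n := \gamma - \nabla f_n - \tau_n$ satisfies $\nabla^*\eta_n = 0$, so $\eta_n \in \ker \nabla^* \cap \ell^1 E = \mb{F}_1 = \mb{f}_1$, whence $\gamma - \tau_n = \nabla f_n + \eta_n \in \mb{k}_1 + \mb{f}_1$ for every $n$. By Banach--Alaoglu in $\ell^1 E = (c_0 E)^*$ the bounded sequence $\tau_n$ has a weak$^*$ convergent subsequence $\tau_n \to \tau_\infty$, so $\gamma - \tau_\infty \in \srl{\mb{k}_1 + \mb{f}_1}^*$. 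Because $\nabla^*$ is the adjoint of $\nabla \colon c_0 V \to c_0 E$ it is weak$^*$-continuous, and $\pi_n \to 0$ weak$^*$ (it is bounded in $\ell^1 V$ and tends to $0$ pointwise, since $\|P^n\delta_x\|_{\ell^\infty} \to 0$ on any infinite graph of bounded degree -- e.g. $\|P^n\delta_x\|_{\ell^\infty}\le \|P^n\delta_x\|_{\ell^2}$ and return probabilities decay -- cf. Lemma \ref{tweakstar-l}). Hence $\nabla^*\tau_\infty = 0$, so $\tau_\infty \in \mb{f}_1$, and therefore $\gamma = (\gamma - \tau_\infty) + \tau_\infty \in \srl{\mb{k}_1 + \mb{f}_1}^*$, the weak$^*$ closure being a subspace that already contains $\mb{f}_1$.

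The main obstacle is the final upgrade from merely ``bounded'' to ``convergent into $\mb{k}_1 + \mb{f}_1$'' for the transport patterns. The compactness argument above sidesteps the need to control $\tau_n$ itself, at the cost of the technical input that the walk distributions spread out, i.e. $\pi_n \to 0$ weak$^*$. An alternative in the spirit of the remark preceding the statement (and of the proof of Theorem \ref{thdp-t}) is to arrange $\tau_n \to 0$ pointwise directly: since $P^n\delta_x$ and $P^n\delta_y$ place vanishing mass on every fixed finite set while remaining at bounded $\ell^1$-transport distance, one expects the transport to be realizable through the ``escaping'' region and hence to vanish on each fixed edge. Making ``routed far out'' precise is, however, more delicate than the weak$^*$-compactness route, so I would present the latter.
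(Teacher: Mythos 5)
Your argument is correct, and up to the last step it coincides with the paper's: both replace the path by $\pi_n = P^n\delta_y - P^n\delta_x$ modulo $\mb{k}_1$ via the telescoping sum $f_n$, and both then feed in the bounded transport patterns $\tau_n$ supplied by the hypothesis, so that $\gamma - \tau_n \in \mb{k}_1 + \mb{f}_1$. Where you genuinely diverge is in disposing of the bounded sequence $\tau_n$. The paper proves that the $\tau_n$ themselves tend pointwise to $0$ --- if a fixed edge carried mass $\geq c$ for all $n$, then, since $P^n\delta_x$ puts vanishing mass on any fixed ball, at least $c/2$ of that mass would have to be transported over a distance $k_n \to \infty$, contradicting the uniform bound on $\|\tau_n\|_{\ell^1}$ --- and then applies Lemma \ref{tweakstar-l} to upgrade this to weak$^*$ convergence $\tau_n \rightharpoonup 0$. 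You instead extract a weak$^*$ limit point $\tau_\infty$ by Banach--Alaoglu (legitimate here: $c_0E$ is separable, so bounded subsets of $\ell^1E$ are weak$^*$ sequentially compact) and use weak$^*$-continuity of $\nabla^*$ together with $\pi_n \rightharpoonup 0$ to place $\tau_\infty$ in $\ker\nabla^* = \mb{F}_1$; you then need $\mb{F}_1 \subseteq \srl{\mb{k}_1 + \mb{f}_1}^*$, which is available since $\srl{\mb{f}_1}^* = \mb{F}_1$ (Lemma \ref{tdual-l} combined with Proposition \ref{tc0triv-p}), or directly from $\mb{f}_1 = \mb{F}_1$. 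Both routes are sound. The paper's is more quantitative and yields the stronger conclusion that the transport patterns go to $0$ weak$^*$ --- a fact it reuses elsewhere (e.g.\ in Theorem \ref{thdp-t} via Remark \ref{rtra}) --- while yours is softer and trades that geometric step for the duality identities of \S{}\ref{sharmetcoho}. Two small remarks: your identity $\nabla^*\nabla\delta_v = \deg(v)(\delta_v - P\delta_v)$ is correct only for regular graphs, but that is the (Cayley-graph) setting in which the lemma is used and the same convention is implicit in the paper's proof; and the difference $\gamma - \nabla f_n - \tau_n$ is finitely supported, so it lies in $\mb{f}_1$ by finite flow decomposition without appealing to the full equality $\mb{F}_1 = \mb{f}_1$.
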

\begin{proof}
Let $p_{xy} \in \ell^1E$ be such a path, then $\nabla^* p_{xy} = 
\delta_x - \delta_y$. 
Modulo $\mb{k}_1$ this belongs to the same class as  
$P^n \delta_x - P^n \delta_y$.
The norm of this class in  $\ell^1E / \mb{f}_1$ is bounded (by hypothesis).
Furthermore, since the $P^n$ tend point-wise to 0,
the (bounded) transport pattern must tend point-wise to 0.

Indeed, assume there some edge $e$ for which $\tau(e) > c$ independently of $n$. 
Let $k_n$ be the largest integer, such that a ball of radius $k_n$ around $e$ contains a mass of at most $c/2$ i.e. $P^n\delta_x ( B_{k_n}(e) ) \leq c/2$. 
Since $P^n\delta_x$ tends point-wise to 0, $k_n \to \infty$.
Then in order to transport the [at least] remaining $c/2$ mass, the transport pattern must transport this mass over at least a distance of $k_n$. Since $k_n c/2$ is not bounded, this contradicts the hypothesis.

This means that $p_{xy} \in \ell^1E / \srl{ \mb{k}_1 + \mb{f}_1}^*$.
\end{proof}
In particular, the previous lemma implies that any harmonic function with $c_0$-gradient will take the same values at the vertices $x$ and $y$. It is also possible to prove the previous lemma with the exit distributions (w.r.t. to some sequence of sets $A_n$), but the author could not apply the upcoming results to this distribution.

Although the required property is hard to ensure in a generic graph, note that if there is a central element in a group, then one gets many such pairs.

\begin{lem}\label{tcenco-l}
Let $\Gamma$ be a group and assume $z \in Z(\Gamma)$. Then in any Cayley graph of $\Gamma$ and for any $x \in \Gamma$,
the distributions $P^n\delta_x$ and $P^n\delta_{zx}$ stay at bounded transport distance.
\end{lem}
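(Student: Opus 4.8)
The plan is to exhibit, for each $n$, an explicit transport pattern from $P^n\delta_x$ to $P^n\delta_{zx}$ whose $\ell^1$-norm is bounded by the word length of $z$, uniformly in $n$ and $x$. Recall (from the discussion following the quotient-norm lemma) that the transport distance between two probability measures is exactly the Wasserstein distance, \ie the quotient norm in $\ell^1E/\mb{F}_1$; so it suffices to produce \emph{one} transport plan of uniformly bounded cost.

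First I would record two standard facts about a Cayley graph of $\Gamma$. The combinatorial distance $d$ is left-invariant, $d(gu,gv)=d(u,v)$, because left multiplication by any $g$ is a graph automorphism; and the random walk operator $P$ commutes with left translations, since at a vertex $g$ the walker moves to $gs$ with a probability depending only on $s$. Writing $z_*$ for the push-forward of measures under left multiplication by $z$, the second fact gives
$P^n\delta_{zx}=P^n z_*\delta_x=z_*P^n\delta_x$, so that $P^n\delta_{zx}$ is simply the left-$z$-translate of $P^n\delta_x$.

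The transport plan is then the obvious one: move the mass that $P^n\delta_x$ places at a vertex $w$ to the vertex $zw$, realised as a pattern $\tau_n:E\to\rr$ by spreading that mass along a fixed geodesic from $w$ to $zw$. One checks immediately that this is a genuine transport pattern, \ie $\nabla^*\tau_n=P^n\delta_{zx}-P^n\delta_x$. Because $z$ lies in the center, $zw=wz$, and left-invariance of $d$ gives $d(w,zw)=d(w,wz)=d(\mf{o},z)$, the word length of $z$, where $\mf{o}$ is the identity. Hence every unit of mass travels the same distance $d(\mf{o},z)$, and the total cost is $\sum_w P^n\delta_x(w)\,d(\mf{o},z)=d(\mf{o},z)$, since $P^n\delta_x$ is a probability measure. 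This bound is independent of $n$ and of $x$, which is exactly what is required.

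The only point needing care, and the place where centrality is used in an essential way, is the identity $d(w,zw)=d(\mf{o},z)$. For a general (non-central) $z$ one only has $d(w,zw)=d(\mf{o},w^{-1}zw)$, which may grow with $w$; it is precisely the equality $w^{-1}zw=z$ that collapses all conjugates to a uniform displacement. I do not expect any real obstacle beyond stating the left-invariance of $d$ and the commutation of $P$ with left translations carefully; everything else is the routine verification above.
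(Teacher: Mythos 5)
Your proof is correct and is essentially the same as the paper's: the paper phrases the key step as the convolution identity $P*\delta_z=\delta_z*P$ and then transports along the path labelled by $z$, which is exactly your observation that centrality collapses $d(w,zw)=d(w,wz)=|z|_S$ uniformly. No gap.
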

\begin{proof}
Since $z$ belongs to the centraliser, applying a step of the random walk, and then moving along the path [in the Cayley graph] labelled by writing $z$ as a word in the generators is the same as first moving along $z$ and the applying the random walk: since $P = \sum_s \delta_s$ and $\delta_s*\delta_z = \delta_z*\delta_s$, then $P*\delta_z = \delta_z * P$.

This means that the transport pattern from $P^n \delta_x$ to $P^n\delta_{zx}$ simply consists in following the path labelled by $z$. This plan is bounded by the word length of $z$.
\end{proof}

\begin{teo}\label{tinfcen-t}
Assume $\Gamma$ has an infinite centre. 
Then (in any Cayley graph) $\hdc$ holds.
\end{teo}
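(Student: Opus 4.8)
The plan is to use the reformulation from Lemma~\ref{tdefharm-l}: it suffices to show that every harmonic function $f\colon \Gamma \to \rr$ whose gradient $\nabla f$ lies in $c_0 E$ is constant. The argument naturally splits into two steps, the first of which is bookkeeping on top of the two preceding lemmas. Fix $z \in Z(\Gamma)$ and any $x \in \Gamma$. By Lemma~\ref{tcenco-l} the distributions $P^n \delta_x$ and $P^n \delta_{zx}$ stay at bounded transport distance (bounded by the word length of $z$), so the hypothesis of Lemma~\ref{texco-l} is met for the pair $x, zx$; as noted right after Lemma~\ref{texco-l}, this forces $f(x) = f(zx)$. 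Since $x$ and $z$ are arbitrary, $f$ is invariant under left translation by the centre, and because $z$ is central this also reads $f(xz) = f(x)$ for all $x \in \Gamma$ and $z \in Z(\Gamma)$.

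The second step, which is the genuinely new ingredient, is to promote this $Z(\Gamma)$-invariance to global constancy using that the centre is \emph{infinite}. Fix a generator $s$ and a vertex $x$, and compare the edge $(x, xs)$ with its translates $(xz, xzs)$ for $z \in Z(\Gamma)$. Centrality gives $xzs = xsz$, so the invariance yields $f(xz) = f(x)$ and $f(xzs) = f(xs)$, whence
\[
\nabla f(xz, xzs) = f(xzs) - f(xz) = f(xs) - f(x) = \nabla f(x, xs).
\]
Thus the single gradient value $\nabla f(x, xs)$ is attained on the whole family of edges $\{(xz, xzs) : z \in Z(\Gamma)\}$. These edges are pairwise distinct (their sources $xz$ are distinct), and since $Z(\Gamma)$ is infinite the family is infinite. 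But $\nabla f \in c_0 E$ means that $\{e : |\nabla f(e)| \geq \eps\}$ is finite for every $\eps > 0$; hence $\nabla f(x, xs) = 0$. As $x$ and $s$ were arbitrary, $\nabla f \equiv 0$, and connectedness of the Cayley graph forces $f$ to be constant.

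I expect the first step to be routine, so the crux — and the only place infiniteness of the centre is used — is the second step: $Z(\Gamma)$-invariance makes the gradient periodic along each coset, so a non-zero gradient would persist on infinitely many edges and violate the $c_0$ condition. The two harmless points to verify are the orientation convention (only one of $(xz,xzs)$, $(xzs,xz)$ lies in $E$, but only $|\nabla f|$ is relevant) and the distinctness of the edges, both immediate. One could equivalently argue on the dual side via Lemma~\ref{tanni-l}, showing each edge Dirac mass lies in $\srl{\mb{k}_1 + \mb{f}_1}^*$; the formulations are interchangeable, but working at the level of the function $f$ makes the role of the infinite centre most transparent.
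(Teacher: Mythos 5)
Your proposal is correct. It uses the same two inputs as the paper (Lemmas \ref{texco-l} and \ref{tcenco-l}) and the same underlying mechanism of letting central elements escape to infinity, but it finishes on the primal side rather than the dual side. The paper's proof works through Lemma \ref{tanni-l}: it takes a path $p_{xy}$, translates it by central elements $c_n$ tending to infinity to get $p_{x_ny_n}$, observes via Lemmas \ref{texco-l} and \ref{tcenco-l} that $p_{xy}$ and $p_{x_ny_n}$ lie in the same class modulo $\srl{\mb{k}_1+\mb{f}_1}^*$, and concludes because the translated paths are bounded in $\ell^1 E$ and tend point-wise (hence weak$^*$) to $0$. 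You instead extract from those same lemmas the statement $f(xz)=f(x)$ for central $z$ (as the paper itself notes after Lemma \ref{texco-l}), deduce that $\nabla f$ takes a single value on the infinite family of edges $\{(xz,xzs)\}_{z \in Z(\Gamma)}$, and kill that value with the definition of $c_0E$. The two arguments are dual to one another --- your step is precisely $\pgen{\nabla f \mid p_{x_ny_n}} = \pgen{\nabla f \mid p_{xy}} \to 0$ evaluated on single edges --- but your version avoids the annihilator formalism entirely and makes the role of the infinite centre more transparent; the paper's version has the advantage of proving the stronger dual statement $\ell^1E = \srl{\mb{k}_1+\mb{f}_1}^*$ explicitly, which is the form reused elsewhere in the text. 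Your two parenthetical caveats (orientation of edges, distinctness of the translated edges) are indeed harmless: even when $s$ has order two the map $z \mapsto \{xz,xzs\}$ is at most two-to-one, so infinitude survives.
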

\begin{proof}
The aim is to use Lemma \ref{tanni-l}, namely to show that $\ell^1E = \srl{\mb{k}_1+\mb{f}_1}^*$.
Let $p_{xy}$ be a path between $x$ and $y$. 
Take $c_n$ to be a sequence of central elements (which tend to infinity, i.e. leaves any fixed ball).
Let $x_n = xc_n = c_nx$ and $y_n = yc_n = c_ny$.
Since $p_{xy} = p_{xx_n} + p_{x_ny_n} + p_{y_ny}$.
By Lemmas \ref{texco-l} and \ref{tcenco-l}, this means that $p_{xy}$ and $p_{x_ny_n}$ belong to the same equivalence class.
Note that $\|p_{x_ny_n}\|_{\ell_1E} = \|p_{xy}\|_{\ell^1E}$, since the distance from $x_n$ to $y_n$ is the same as the distance from $x$ to $y$.
Lastly, since $p_{x_ny_n}$ is bounded and tends point-wise to 0, one get that $p_{xy}$ is in the same class as 0 in $\ell^1E / \srl{\mb{k}_1+\mb{f}_1}^*$. 
\end{proof}

Note that this result can be improved to include the case where there are infinitely many elements with a finite conjugacy class. To do this one just needs to adapt the proofs of \cite[Proposition 1.5 and Lemma 2.7]{GJ}.

The reader is directed to the work of Amir, Gerasimova and Kozma \cite{AGK} for a proof that direct product of graphs have $\hdc$.

In the case of two groups $G_1$ and $G_2$, note that the direct product $G= G_1 \times G_2$ of the groups admits many generating sets for which the resulting graph is not a direct product of the Cayley graphs (in the sense of graph products). See Corollary \ref{tdirc0gp-c} for a result about direct product of groups.

\subsection{Centralisers and malnormal subgroups}\label{sscentmal}

This section deals only with Cayley graph of finitely generated groups.
Although the main focus is for harmonic function in the usual sense, it will be useful to consider functions which are harmonic with respect to other measures. It will be assumed that the measure is finitely supported and symmetric (\ie $\mu(s) = \mu(s^{-1})$); see Remark \ref{rfinsup} for some variations.

Recall that the centraliser of $H$ in $G$ is 
$Z_G(H) := \{ g \in G \mid \forall h \in H, gh = hg\}$.
If $[g,H]$ denotes the set $\{ [g,h] \mid h \in H\}$, note that this can be rewritten as 
$Z_G(H) := \{ g \in G \mid  [g,H]$ has one element $\}$.
This second definition is closer to the definition of the FC-centraliser of $H$ in $G$: 
$Z_G^{FC}(H) := \{ g \in G \mid [g,H]$ is finite $\}$.
\begin{lem}\label{tconcen-l}
Let $H<G$ be an infinite subgroup of $G$. 
Let $f$ be a function on $G$ with gradient in $c_0$. 
Assume $f$ is harmonic with respect to the [finitely supported symmetric] measure $\mu$ whose support generate an infinite subgroup of $H$.
Then $f$ is constant on the right cosets of $Z= Z_G(H)$ or $Z^{FC}_G(H)$ (\ie $f$ is constant on the sets $g Z$ for any $g \in G$).
\end{lem}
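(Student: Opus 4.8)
The plan is to show directly that $f(g) = f(gz)$ for every $g \in G$ and every $z \in Z$, which is exactly the assertion that $f$ is constant on the right coset $gZ$. Write $N := \pgen{\mathrm{supp}\,\mu}$ for the (infinite) subgroup of $H$ generated by the support of $\mu$, and let $P^n\delta_g$ denote the time-$n$ distribution of the $\mu$-random walk started at $g$ (a finitely supported probability measure, since $\mu$ is). Because $f$ is $\mu$-harmonic, $\pgen{f \mid P^n\delta_g} = (P^n f)(g) = f(g)$ for all $n$, so that
\[
f(g) - f(gz) = \pgen{f \mid P^n\delta_g - P^n\delta_{gz}}
\]
is independent of $n$, and it suffices to show the right-hand side tends to $0$.

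The heart of the argument is to produce, for each $n$, a finitely supported transport pattern $\tau_n$ with $\nabla^* \tau_n = P^n\delta_{gz} - P^n\delta_g$ whose $\ell^1$-norm is bounded independently of $n$. First I would record the commutation estimate underlying Lemma \ref{tcenco-l}: for a walk word $w = s_1\cdots s_n \in N$ one has $zw = (zwz^{-1})z = w\,e_w\,z$ with $e_w := [w,z^{-1}] = w^{-1}zwz^{-1}$. Since $N \le H$ and $z \in Z$, the correction $e_w$ lies in the set $[z^{-1},H]^{-1}$, which is the single point $\{e\}$ when $Z = Z_G(H)$ and a \emph{finite} set when $Z = Z^{FC}_G(H)$ (the FC-centraliser is a subgroup, so $z^{-1}\in Z$ and $[z^{-1},H]$ is finite). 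Consequently the walk endpoint $gzw = gw\,(e_w z)$ differs from the endpoint $gw$ of the walk from $g$ by right multiplication by an element of the finite set $\{e_w z\}$, all of word length at most $L := |z| + \max_{d \in [z^{-1},H]^{-1}} |d|$. Coupling the two distributions through the common word $w$ and routing, for each of the finitely many ``jump types'', the mass $\mu^{*n}(w)$ from $gw$ to $gw(e_w z)$ along a fixed path of length $\le L$ yields such a $\tau_n$, with $\|\tau_n\|_{\ell^1} \le L\sum_w \mu^{*n}(w) = L$.

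To conclude I would check that $\tau_n \to 0$ weak$^*$ in $\ell^1 E = (c_0 E)^*$. Since $N$ is infinite and $\mu$ is symmetric, $P^n\delta_g \to 0$ pointwise (a symmetric walk on an infinite group has vanishing transition probabilities); the flow of $\tau_n$ across any fixed edge is a sum of masses $\mu^{*n}(w)$ over the boundedly many $w$ for which $gw$ lies within distance $L$ of that edge, so it tends to $0$. Thus $\tau_n$ is $\ell^1$-bounded and converges pointwise to $0$, hence weak$^*$ to $0$ by Lemma \ref{tweakstar-l}. As $\nabla f \in c_0 E$, this gives $\pgen{\nabla f \mid \tau_n} \to 0$, and therefore
\[
f(g) - f(gz) = \pgen{f \mid P^n\delta_g - P^n\delta_{gz}} = -\pgen{f \mid \nabla^* \tau_n} = -\pgen{\nabla f \mid \tau_n} \to 0 .
\]
Since the left-hand side does not depend on $n$, it is $0$, as required.

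The main obstacle is the FC-centraliser case: there $z$ no longer commutes with $N$, and what must be controlled is that the \emph{family} of commutator corrections $\{e_w = [w,z^{-1}] : w \in N\}$ stays inside one fixed finite set, rather than merely that each single $z$ almost-commutes with $H$. This uniform finiteness is precisely the content of $[z^{-1},H]$ being finite, and it is what keeps the displacement $e_w z$ of uniformly bounded length and hence $\tau_n$ of bounded $\ell^1$-norm. The remaining points are routine: the pointwise decay of $P^n\delta_g$ and the edge-wise flow estimate used to pass from boundedness to weak$^*$ convergence.
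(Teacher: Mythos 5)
Your proof is correct and follows essentially the same route as the paper's: use $\mu$-harmonicity to rewrite $f(g)-f(gz)$ as a pairing with $\delta_g*\mu^{*n}-\delta_{gz}*\mu^{*n}$, commute $z$ past the walk (exactly in the centraliser case, up to a uniformly finite set of commutator corrections in the FC case) to get transport patterns of bounded $\ell^1$-norm tending pointwise to $0$, and pair against the $c_0$ gradient. You are somewhat more explicit than the paper about the uniform finiteness of the corrections $[w,z^{-1}]$ over all walk words $w$ and about the weak$^*$-convergence step, but the underlying argument is the same.
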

\begin{proof}
For readability, the case $Z = Z_G(H)$ will be done first.
Consider $g \in G$ and $z \in Z_G(H)$. 
Then let $d = h(g) - h(gz) = \langle h | \delta_g - \delta_{gz} \rangle$.
Since $h$ is harmonic with respect to $\mu_H$, $d= \langle h | \delta_g * \mu_H^n - \delta_{gz} * \mu_H^n \rangle$.
However, since $z$ lies in the centraliser, $\delta_{gz} *\mu_H^n = \delta_g *\delta_z * \mu_H^n = \delta_g * \mu_H^n* \delta_z$.
This means there is a transport pattern from $\delta_g * \mu_H^n$ to $\delta_{gz} * \mu_H^n$ which is bounded (by the word length of $z$) and tends to 0 (since $H$ is infinite and the support of $\mu_H$ generates $H$).
Consequently, $d =0$ which means that $h$ is constant on the right cosets of $Z_G(H)$. 

To get the conclusion for $Z= Z_G^{FC}(H)$, note that 
$\delta_z * \mu_H 
= \delta_z * \sum_{h \in H} \mu_H(h) \delta_h 
= \sum_{h \in H} \mu_H(h) \delta_z * \delta_h 
= \sum_{h \in H} \mu_H(h) \delta_h * \delta_z * \delta_{c(h)}$
where $c(h) = [z^{-1},h^{-1}]$ depends on $h$, but takes only finitely many values.
So the transport pattern is still bounded (by $\max_{h \in H} |z c(h)|_S$) and the conclusion also holds.
\end{proof}

There are some further definitions and properties which should be introduced before moving on to the next lemma.
A subgroup $K<G$ is almost-malnormal if $\forall g \in G \setminus K$, $K \cap g K g^{-1}$ is finite. (The whole group $G$ is an almost-malnormal subgroup of itself.)
Recall that the q-normaliser of $H$ in $G$ is $N^q_G(H) = \langle g \in G \mid H \cap g H g^{-1}$ is infinite $\rangle$ (this subgroup is not always finitely generated).

Note that, if $Z = Z_G(H)$ or $Z_G^{FC}(H)$, then $H \subset N^q(Z)$ and $Z \subset N^q(H)$.
Also if one starts at a subgroup $K$ and considers the [transfinite] sequence of iterated q-normalisers of $K$, then this sequence stabilises at the almost-malnormal hull of $K$ (the smallest almost-malnormal subgroup containing $K$).
Furthermore, the almost-malnormal hull of a finite subgroup is itself.
Lastly, if $L < K$ is an infinite subgroup and $L \lhd H < G$ then $N^q(K)$ (and hence the almost-malnormal hull) contains $H$.

\begin{lem}\label{tconmar-l}
Let $G$ be an infinite groups, $S$ a finite generating set and consider the associated Cayley graph.
Assume $f$ is so that $\nabla f \in c_0E$ and $f$ is constant on the right-cosets of some subgroup $K<G$.
Then $f$ is constant on the almost-malnormal hull of $K$.
\end{lem}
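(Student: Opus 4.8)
The plan is to run a transfinite induction along the tower of iterated q-normalisers. Write $K_0 = K$, $K_{\alpha+1} = N^q_G(K_\alpha)$, and $K_\lambda = \bigcup_{\beta<\lambda}K_\beta$ at limit ordinals; as recalled above, this tower stabilises at the almost-malnormal hull $M$ of $K$. I would prove by transfinite induction that $f$ is constant on the right cosets of every $K_\alpha$, which gives the claim for $M$. The base case is the hypothesis, and the limit case is immediate: if $g \in K_\lambda$ then $g \in K_\beta$ for some $\beta<\lambda$, so $f(xg)=f(x)$ for all $x$ by the induction hypothesis. We may assume $K$ is infinite, since the hull of a finite subgroup is the subgroup itself; then every $K_\alpha$ is infinite. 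Everything therefore reduces to the successor step.

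For the successor step, assume $f$ is constant on the right cosets of an infinite subgroup $H$ and let us show it is constant on the right cosets of $N^q_G(H)$. Since $N^q_G(H)$ is generated by the elements $g$ for which $L := H \cap gHg^{-1}$ is infinite, and since, for a fixed $g$, the identity $f(xg)=f(x)$ for all $x$ propagates to all words in such generators and their inverses, it suffices to fix one such $g$ and prove $f(xg)=f(x)$ for every $x\in G$.

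The heart of the matter is the following substitute for the harmonic averaging used in Lemma \ref{tconcen-l}: coset-constancy together with the infinitude of $L$ lets one push the comparison out to infinity, where the $c_0$-gradient kills it. Fix $x$ and set $d = f(x)-f(xg)$. For any $\ell \in L$ one has, on one hand, $f(x\ell)=f(x)$ because $\ell\in H$, and on the other hand $f(x\ell g)=f(xg)$ because $x\ell g = (xg)(g^{-1}\ell g)$ with $g^{-1}\ell g\in H$ (as $\ell\in gHg^{-1}$). Hence
\[
d = f(x\ell)-f(x\ell g) = -\pgen{\nabla f \mid P_\ell},
\]
where $P_\ell$ is (the sum of the directed edges of) a geodesic path from $x\ell$ to $x\ell g$, of length $|g|_S$. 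Since $L$ is infinite and balls in the Cayley graph are finite, I can choose $\ell=\ell_n\in L$ with $|\ell_n|_S\to\infty$; then $P_{\ell_n}$ consists of edges whose endpoints have word length at least $|\ell_n|_S-|x|_S-|g|_S\to\infty$. As $\nabla f\in c_0E$,
\[
|d| \le \sum_{e\in P_{\ell_n}}|\nabla f(e)| \le |g|_S\,\sup_{e\in P_{\ell_n}}|\nabla f(e)| \xrightarrow[n\to\infty]{} 0,
\]
and since $d$ does not depend on $n$, this forces $d=0$, as required.

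The main obstacle is precisely this successor step; the transfinite scaffolding is routine. The point worth care is the bookkeeping that supplies the two coset relations $f(x\ell)=f(x)$ and $f(x\ell g)=f(xg)$: the first uses $L\subseteq H$, while the second uses $L\subseteq gHg^{-1}$, so it is exactly the \emph{intersection} $L=H\cap gHg^{-1}$ being infinite (the defining condition for $g$ to generate $N^q_G(H)$) that makes both available simultaneously and lets the path $P_{\ell_n}$ escape to infinity. In transport language this is the statement that there is a bounded transport pattern (of $\ell^1$-mass $|g|_S$) between $\delta_{x\ell_n g}$ and $\delta_{x\ell_n}$ whose support leaves every ball, exactly as in the proofs of Lemmas \ref{texco-l}, \ref{tcenco-l} and \ref{tconcen-l}.
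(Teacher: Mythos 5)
Your proof is correct and follows essentially the same route as the paper's: for a generator $g$ of the q-normaliser one exploits the infinitude of $H\cap gHg^{-1}$ to find points of $xH$ and $xgH$ at bounded distance from each other but arbitrarily far from the origin, so that the $c_0$-gradient forces $f(x)=f(xg)$, after which one propagates over words and iterates transfinitely. The only (harmless) cosmetic difference is that you prove $f(xg)=f(x)$ uniformly in the basepoint $x$ and let the coset statement follow by composing generators, whereas the paper first treats the coset $K$ itself by an induction on word length in the generators of $N^q_G(K)$ and then handles general cosets by translating $f$.
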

\begin{proof}
Let $N = N^q_G(K)$ and take $g \in N$ to be one of the generators of $N$, \ie $K \cap gKg^{-1}$ is infinite. 
Then $gK \cap Kg$ is infinite, so let $g_n$ be a infinite sequence of (distinct) elements in this intersection.
Note that $\gamma_n \in gK$, hence $d = f(\gamma_n)$ is a constant.
On the other hand all the $\gamma_n$ are at distance at most $|g|_S$ from $K$, because $\gamma_n \in Kg$. 
Since $f$ is constant on $K$, it follows that $f(\gamma_n) = f(K) + \sum_{k \in \pi_{K,g_n} } \nabla f(k)$ where $\pi_{K,\gamma_n}$ is some path of length at most $|g|_S$ from $K$ to $\gamma_n$.
But $f \in c_0E$ and consequently, the sum tends to 0 as $n \to \infty$. 
Hence $f(K) = f(\gamma_n) = f(gK)$.

By induction this can be extended to any element of $g$. Indeed, assume that $f$ is constant on all cosets of the form $g_1\cdots g_k K$ (where the $g_j$ are generators of $N$ and $0 \leq k \leq n$).
Let $h_k = g_1 \cdots g_k$ and consider $h_k g_{k+1} \in N$.
Then (since $g_{k+1}$ is a generator of $N$) $h_kg_{k+1} K g_{k+1}^{-1}h_k^{-1}$ has an infinite intersection with $h_kK h_k^{-1}$.
Hence  $h_k g_{k+1} K \cap h_k K g_{k+1}$ is also infinite. 
Consider again a sequence $\gamma_n$ of elements of this intersection. 
Then $f(\gamma_n)$ is constant (since $h_k g_{k+1} K$ is a right-coset of $K$) and the $\gamma_n$ are at distance at most $|g_{k+1}|_S$ from $h_k K$. 
But by induction, $f(h_kK)= f(K)=d$.
Hence $f$ takes the same constant value on $h_kg_{k+1}K$. 
So induction shows that the statement hold for any element of $N$.

This shows that $f$ is constant on $N$. To show that $f$ is constant on any right-coset of $N$, replace $f$ by a translate: let $\lambda_x f(g) := f(x^{-1}g)$. Then $\lambda_xf $ is also constant on the right-cosets of $K$ and $\nabla \lambda_xf \in c_0E$. As a consequence $\lambda_x f$ is constant on $N$, which means that $f$ is constant on $xN$.

Since $f$ is constant on the right-cosets of $N = N_1 = N^q(K)$, then it is also constant on the right-cosets of $N_2 = N^q(N)$. 
This argument can be repeated to any successive ordinal of the sequence $N_{i+1} := N^q(N_i)$. 
If $\kappa$ is a limit ordinal, then the conclusion holds as $N_\kappa = \cup_{i < \kappa} N_i$.
Thus transfinite induction may be applied to show that $f$ is constant on the malnormal hull of $Z$.
\end{proof}

The previous two lemmas could (and in part will) be used as follows. First, one needs to show that a function which is harmonic and has gradient in $c_0$ is also harmonic with respect to some other measure. 
Then conclude that it is constant on the right cosets of some subgroup (here using Lemma \ref{tconcen-l}; but there could be other possibilities).
Finally, use Lemma \ref{tconmar-l} to show it's constant on the whole group. 
The upcoming results are some possibilities for this strategy.

\begin{cor}\label{tdirc0gp-c}
Let $G = G_1 \times G_2$ be a direct product of two infinite (finitely generated) groups.
Then (in any Cayley graph of $G$) if $f$ is harmonic and $\nabla f \in c_0E$, then $f$ is constant.
\end{cor}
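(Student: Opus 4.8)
The plan is to run the three-step scheme sketched before the statement: first upgrade the harmonicity of $f$ to harmonicity for a walk supported on a single factor, then use Lemma \ref{tconcen-l} to spread constancy to a large subgroup, and finally use Lemma \ref{tconmar-l} to reach all of $G$. Write $\mu$ for the (symmetric, finitely supported) step distribution of the chosen Cayley graph, and let $\mu_1 = \pi_{1*}\mu$ and $\mu_2 = \pi_{2*}\mu$ be its push-forwards under the projections $\pi_i:G \to G_i$; these are symmetric and finitely supported, and $\mathrm{supp}\,\mu_i$ generates $G_i$. Granting that $f$ is $\mu_1$-harmonic, Lemma \ref{tconcen-l} applied with $H = G_1$ (infinite, with $\mathrm{supp}\,\mu_1$ generating it) shows that $f$ is constant on the right cosets of $Z_G(G_1)$. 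Now $Z_G(G_1) = Z(G_1)\times G_2$ is normal in $G$ and contains the infinite subgroup $\{e\}\times G_2$; since $g\,Z_G(G_1)\,g^{-1} = Z_G(G_1)$ for every $g$, the q-normaliser of $Z_G(G_1)$ is all of $G$, so its almost-malnormal hull is $G$. Lemma \ref{tconmar-l} then yields that $f$ is constant. By symmetry the roles of $G_1$ and $G_2$ are interchangeable.

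The heart of the matter is thus the first step, that \emph{$f$ is $\mu_1$-harmonic}. Because $\mu_1$ is finitely supported and $\nabla f \in c_0E$, the defect $\psi := \Delta_{\mu_1}f$ is a finite combination of edge-gradients of $f$ read along bounded-length paths, so $\psi \in c_0V$; the aim is to prove $\psi \equiv 0$. The tool I would invoke is the maximum principle in the form of Proposition \ref{tproplap-p}.1: a function of $c_0V$ that is harmonic for an irreducible symmetric walk on the infinite group $G$ must vanish. The decisive structural input is that the two factor directions commute inside $G$, hence the operators attached to $\mu_1$ and $\mu_2$ commute; the natural approach is to use this commutation together with $\Delta_\mu f = 0$ to show that $\psi$ is annihilated by an auxiliary Laplacian assembled from $\mu_1$ and $\mu_2$, whose support still generates $G$, and therefore that $\psi = 0$.

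The step I expect to be the main obstacle is making this vanishing rigorous when the factors are centreless (\eg $G_1 = G_2 = F_2$). When some factor has a non-trivial centre the argument is immediate and bypasses even Lemma \ref{tconcen-l}: for $z \in Z(G_2)$ the function $f - R_z f$ is itself $\mu$-harmonic and lies in $c_0V$, so it vanishes by Proposition \ref{tproplap-p}.1, giving constancy on the cosets of $Z(G_2)$ directly (this is exactly the mechanism of Theorem \ref{tinfcen-t}). For non-central directions this shortcut fails, and the transport plans underlying Lemmas \ref{texco-l} and \ref{tcenco-l} cease to be bounded, because shifting the $n$-step distribution by a non-central element incurs a conjugation cost that grows with $n$. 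Consequently the general case cannot be settled by exhibiting an explicit bounded transport plan; it must instead extract the vanishing of $\psi$ from the commuting-operator identities above, and it is precisely this passage — turning the a priori non-harmonic but $c_0$ defect $\psi$ into an honestly harmonic object killed by the maximum principle — that I regard as the crux of the proof. Once $\mu_1$-harmonicity is in hand, the conclusion follows by the two-line reduction of the first paragraph.
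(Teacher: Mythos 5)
Your reduction in the opening paragraph is fine as far as it goes, but the proposal is not a proof: everything hinges on the claim that $f$ is $\mu_1$-harmonic, and you never establish it --- you write ``Granting that $f$ is $\mu_1$-harmonic'' and then explicitly identify this passage as the crux you cannot make rigorous. That is a genuine gap, not a deferred verification. Moreover, the route you sketch for closing it does not work. The defect $\psi=\Delta_{\mu_1}f$ does lie in $c_0V$, but to kill it with the maximum principle (Proposition \ref{tproplap-p}.1) you would need $\psi$ to be harmonic for some irreducible walk, and the natural candidate $\Delta_\mu\psi=\Delta_\mu\Delta_{\mu_1}f$ vanishes only if right convolution by $\mu_1=\pi_{1*}\mu$ commutes with right convolution by $\mu$. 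It does not: for $s=(s_1,s_2)\in\mathrm{supp}\,\mu$ and $t=(t_1,e)\in\mathrm{supp}\,\mu_1$ one has $st=(s_1t_1,s_2)$ while $ts=(t_1s_1,s_2)$, so $\mu*\mu_1=\mu_1*\mu$ fails as soon as $\mu_1$ is not invariant under $G_1$-conjugation (take $G_1=F_2$ and $\mu_1$ the push-forward of a generic generating set). The only measures on a factor that commute with an arbitrary step distribution are those constant on conjugacy classes --- which is exactly the device Theorem \ref{tsubcent} uses and the reason it must restrict to a specially built generating set; your plan cannot recover the ``any Cayley graph'' statement by that mechanism.

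The paper's own argument is much shorter and does not change the measure at all: it invokes Lemma \ref{tconcen-l} with $H=G_2$ (so that $G_1\subset Z_G(G_2)$) to get that $f$ is constant on $G_1$-cosets, then symmetrically with $H=G_1$ to get constancy on $G_2$-cosets, and concludes because $G_1$ and $G_2$ generate $G$; no push-forward, no maximum principle, and no appeal to the malnormal-hull Lemma \ref{tconmar-l} (your last step via that lemma is correct but superfluous once both coset constancies are in hand). I will note that your unease is not baseless: Lemma \ref{tconcen-l} as stated asks for the harmonicity measure to be supported in a subgroup of $H$, so its application to the ambient walk with $H=G_2$ is itself terse, and the conjugation cost you point out is precisely the issue that hypothesis is designed to avoid. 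But identifying the difficulty is not the same as resolving it; as written, your argument establishes the corollary only modulo an unproved (and, by your own sketch, unreachable) intermediate claim.
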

\begin{proof}
Indeed, since $G_1 \subset Z(G_2)$ then by Lemma \ref{tconcen-l}, $f$ is constant on the $G_1$-cosets. 
Since one also has $G_2 \subset Z(G_1)$, $f$ is also constant on the $G_2$-cosets. This means that $f$ is constant.
\end{proof}

\begin{cor}\label{tzinf1-c}
Let $G$ be a finitely generated group and let $H < G$ be an infinite subgroup.
Let $Z < Z^{FC}_G(H)$. 
Assume $H \cap Z$ is infinite and $G = \langle  H,Z\rangle$.
Then (in any Cayley graph of $G$) if $f$ is harmonic and $\nabla f \in c_0E$, then $f$ is constant.
\end{cor}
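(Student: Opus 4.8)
The plan is to run the two-step strategy spelled out just before Corollary \ref{tdirc0gp-c}: first manufacture an auxiliary measure on $H$ for which $f$ is harmonic, then invoke Lemma \ref{tconcen-l} to push constancy onto the cosets of the FC-centraliser, and finally use Lemma \ref{tconmar-l} to propagate it to all of $G$. Throughout write $L := H \cap Z$; by hypothesis $L$ is infinite, $L \subseteq H$, and $L \subseteq Z \subseteq Z^{FC}_G(H)$. (Equivalently the whole argument could be phrased through the dual criterion of Lemma \ref{tanni-l}, i.e. by showing $\ell^1E = \srl{\mb{k}_1 + \mb{f}_1}^*$, but the function-level formulation is cleaner here.)

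First I would establish that $f$ is harmonic with respect to a symmetric measure $\mu$ whose support generates an infinite subgroup of $H$; this is precisely the hypothesis needed to feed Lemma \ref{tconcen-l}, and it is the step that really spends the $c_0$-condition, so I expect it to be the main obstacle. The natural candidate for $\mu$ is the step distribution of the random walk induced on $H$ by the ambient simple random walk (the trace of the walk watched on $H$). An optional-stopping argument shows that $f|_H$ is harmonic for this induced walk except for the contribution of excursions that leave $H$ and never return, and the heart of the matter is to see that this boundary contribution drops out. This is exactly where $\nabla f \in c_0E$ enters: in the transport/duality formalism of Lemmas \ref{tcenco-l}--\ref{texco-l} one only ever pairs $\nabla f$ against bounded transport plans $\tau_n$ whose supports escape to infinity (as $P^n\delta_x \to 0$ pointwise), and such pairings tend to $0$ because $\nabla f \in c_0E$; hence the non-returning excursions contribute nothing to the relevant differences $P^n\delta_x - P^n\delta_{xa}$. (Should the induced measure fail to be finitely supported, the variations announced in Remark \ref{rfinsup} are what keep Lemma \ref{tconcen-l} applicable.) The reason elements of $L$ are the right test elements is that an $a\in L$ FC-centralises $H$, so $w^{-1}aw = a[a,w]$ with $[a,w]\in [a,H]$ finite for $w\in H$, which is what makes the coupled plans bounded.

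Granting this, Lemma \ref{tconcen-l} applied to $\mu$ and the subgroup $H$ yields that $f$ is constant on the right cosets of $K := Z^{FC}_G(H)$, which contains $Z$. I would then observe that $K$ is normalised by $H$: for $h\in H$ and $g\in K$ one has $[hgh^{-1},H] = h[g,H]h^{-1}$, which is finite, so $hgh^{-1}\in K$. Consequently $K\cap hKh^{-1} = K$ is infinite for every $h\in H$, giving $H \subseteq N^q_G(K)$, while $Z \subseteq K$ trivially. By Lemma \ref{tconmar-l}, $f$ is constant on the almost-malnormal hull of $K$; since that hull contains $N^q_G(K)\supseteq H$ and contains $Z$, it contains $\langle H,Z\rangle = G$. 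Therefore $f$ is constant on the single coset $G$, that is, $f$ is constant.
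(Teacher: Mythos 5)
Your steps 2 and 3 (applying Lemma \ref{tconcen-l} to get constancy on the cosets of $Z^{FC}_G(H)$, then Lemma \ref{tconmar-l} to propagate constancy to the almost-malnormal hull, which contains $\langle H,Z\rangle=G$) are sound and match the paper's endgame. The genuine gap is your first step: producing a measure supported on $H$ for which $f$ is harmonic. The trace of the ambient simple random walk on $H$ is a probability measure only if the walk returns to $H$ almost surely, and under the hypotheses of this corollary it need not: take $G=H\times A$ with $A=\zz^3$ and $Z=C\times A$ for some infinite $C\leq Z(H)$, so that $Z\leq Z^{FC}_G(H)$, $H\cap Z=C$ is infinite and $G=\langle H,Z\rangle$, yet the $A$-coordinate of the walk is transient and a definite fraction of the mass never returns to $H$. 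Your proposed fix --- that the non-returning excursions ``contribute nothing'' because $\nabla f\in c_0E$ --- is not an argument: the harmonicity defect of $f|_H$ for the (sub-probability) return measure is a pairing of $f$ \emph{itself}, evaluated along escaping trajectories, against the escaping mass, and since $f$ is merely Lipschitz this term has no reason to vanish; no transport plan controlling it is exhibited (nor is the optional stopping for an unbounded $f$ at an unbounded stopping time justified, nor the finite-first-moment condition of Remark \ref{rfinsup} that Lemma \ref{tconcen-l} would need for an infinitely supported $\mu$). A telling symptom is that your route never genuinely uses the hypothesis that $H\cap Z$ is infinite: if it worked, it would prove Theorem \ref{tsubcent} for \emph{every} Cayley graph, whereas the paper obtains that theorem only after choosing a special generating set, precisely because manufacturing a suitable measure on $H$ is the hard point.

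The hypothesis that $C:=H\cap Z$ is infinite is there to let you bypass the induced walk entirely, and this is what the paper does. Its proof notes that the elements of $C$ have finite conjugacy class in $G$ (each $c\in C$ lies both in $H$ and in $Z^{FC}_G(H)$, and $G=\langle H,Z\rangle$), so $Z^{FC}_G(G)$ is infinite; Lemma \ref{tconcen-l} is then applied with the subgroup taken to be $G$ itself and $\mu$ the step distribution of the ambient walk --- with respect to which $f$ is harmonic by hypothesis, in any Cayley graph. This gives constancy of $f$ on the cosets of $Z^{FC}_G(G)$, hence on $C$; since $N^q(C)\supseteq H$ and $N^q(H)\supseteq Z$, the almost-malnormal hull of $C$ is $G$ and Lemma \ref{tconmar-l} concludes. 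Replacing your first step by this observation repairs the proof.
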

\begin{proof}
If $C:= Z \cap H$ is infinite, then actually $Z^{FC}_G(G)$ is infinite.
Hence, for any generating set $S$, Lemma \ref{tconcen-l} shows that $f$ (which is harmonic on $G$) is constant on $C$.
Since $N^q(C) \supset H$ and $N^q(H) \supset Z$, the almost-malnormal hull of $C$ is $G$ and 
Lemma \ref{tconmar-l} can then be applied to conclude that $f$ is constant on $G$. 
\end{proof}

Upon restricting to a specific Cayley graph, the condition that $H \cap Z$ is infinite can be relaxed.

\begin{teo}\label{tsubcent}
Let $G$ be a finitely generated group and let $H < G$ be an infinite subgroup.
Assume $Z < Z^{FC}_G(H)$ is infinite and $G = \langle H,Z\rangle$.
Then there is a [finite symmetric] generating set $S$, so that, in the associated Cayley graph,
if $f$ is harmonic and $\nabla f \in c_0E$, then $f$ is constant.
\end{teo}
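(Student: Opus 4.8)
The plan is to produce the generating set $S$ as a disjoint union of two \emph{commuting} pieces, one sitting inside $H$ and one inside $Z$, arranged so that every harmonic function with $c_0$-gradient is automatically harmonic for the random walk supported on the $H$-piece alone; this puts us exactly in position to apply the strategy advertised after Lemma \ref{tconmar-l}, feeding the result successively into Lemma \ref{tconcen-l} and Lemma \ref{tconmar-l}.

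First I would use the FC-hypothesis to create room for commuting generators inside $H$. For $z\in Z\subseteq Z^{FC}_G(H)$ the set $[z,H]$ is finite, and since $hzh^{-1}=z[z^{-1},h]$ the $H$-conjugacy class of $z$ is contained in $z\,[z^{-1},H]$, hence finite; therefore the centraliser $C_H(z)$ has finite index in $H$. Consequently, for any finite symmetric $S_Z\subseteq Z$ the subgroup $Z_H(S_Z)=\bigcap_{z\in S_Z}C_H(z)$ has finite index in $H$ and is thus infinite. The choice of $S$ is then: pick a finite symmetric $S_Z\subseteq Z$ together with a finite symmetric $S_H\subseteq Z_H(S_Z)$ so that $\langle S_H\rangle$ is infinite and $S:=S_H\sqcup S_Z$ generates $G$; by construction $S_H$ and $S_Z$ commute elementwise.

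The heart of the proof is the extraction of $H$-harmonicity from $G$-harmonicity. Let $P$ be the simple random walk on $\cay(G,S)$ and $P_{S_H},P_{S_Z}$ the walks supported on $S_H,S_Z$, so $P$ is a convex combination of $P_{S_H}$ and $P_{S_Z}$. Since every element of $S_H$ commutes with every element of $S_Z$, the operators $P_{S_H}$ and $P_{S_Z}$ commute, hence so do $P_{S_H}$ and $P$. Now take $f$ harmonic with $\nabla f\in c_0E$ and set $u:=P_{S_H}f-f$. On one hand $u(g)=\tfrac{1}{|S_H|}\sum_{s\in S_H}\nabla f(g,gs)$, so $u\in c_0V$; on the other hand, using $Pf=f$ and $[P,P_{S_H}]=0$,
\[
Pu=PP_{S_H}f-Pf=P_{S_H}Pf-f=P_{S_H}f-f=u,
\]
so $\Delta u=0$ with $u\in c_0V$. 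By Proposition \ref{tproplap-p}.1 this forces $u=0$; that is, $f$ is harmonic for the uniform measure $\mu_H$ on $S_H$. Since $S_H$ generates the infinite subgroup $\langle S_H\rangle$ of $H$, Lemma \ref{tconcen-l} now shows $f$ is constant on the right cosets of $K:=Z^{FC}_G(H)$. As $H$ normalises $K$, every $h\in H$ gives $K\cap hKh^{-1}=K$ infinite, so $H\subseteq N^q(K)$ and thus $N^q(K)\supseteq\langle K,H\rangle\supseteq\langle Z,H\rangle=G$; the almost-malnormal hull of $K$ is all of $G$, and Lemma \ref{tconmar-l} yields that $f$ is constant.

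I expect the main obstacle to be the very first construction: choosing $S_Z\subseteq Z$ and $S_H\subseteq Z_H(S_Z)$ that commute \emph{and} still generate $G$, while keeping $\langle S_H\rangle$ infinite. The finite-index property of $Z_H(S_Z)$ supplies abundant commuting elements inside $H$, and because $Z^{FC}_G(H)\trianglelefteq G$ with $G=H\cdot Z^{FC}_G(H)$ one has good control of the quotient $G/Z^{FC}_G(H)$; the delicate step is to verify that commuting generators can be selected which surject onto this quotient and capture enough of $Z$ to generate $G$. (Implicit throughout is that $H$, equivalently the finite-index subgroup $Z_H(S_Z)$, contains a finitely generated infinite subgroup, so that $\langle S_H\rangle$ may indeed be taken infinite.)
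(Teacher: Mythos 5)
Your overall strategy is the paper's: choose the Cayley graph so that a harmonic function with $c_0$-gradient is forced to be harmonic for an auxiliary finitely supported measure on $H$, then run Lemma \ref{tconcen-l} followed by Lemma \ref{tconmar-l}. The second half of your argument is sound (and your maximum-principle extraction of $\mu_H$-harmonicity, via $u=P_{S_H}f-f\in c_0V$ and $\Delta u=0$, is a clean alternative to the transport-plan computation the paper uses at that point). The gap is exactly where you suspect it, and it is not merely delicate but fatal as stated: a generating set $S=S_H\sqcup S_Z$ with $S_H\subset H$ and $S_Z\subset Z$ commuting \emph{elementwise} and generating $G$ need not exist. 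Take the Klein bottle group $G=\langle a,b\mid bab^{-1}=a^{-1}\rangle$ with $H=\langle b\rangle$ and $Z=\langle a\rangle$. Then $[a^m,b^n]\in\{e,a^{2m}\}$, so the hypotheses of the theorem hold; but $C_H(a^m)=\langle b^2\rangle$ for every $m\neq 0$, so any nonempty $S_Z\subset Z\setminus\{e\}$ forces $S_H\subset\langle b^2\rangle$ and hence $\langle S_H\cup S_Z\rangle\leq\langle a,b^2\rangle\lneq G$, while $S_Z\subseteq\{e\}$ gives $\langle S\rangle\leq H\lneq G$. No admissible choice exists. (There is also the secondary issue you flag in parentheses: if $Z_H(S_Z)$ is locally finite, $\langle S_H\rangle$ cannot be taken infinite, and Lemma \ref{tconcen-l} then does not apply.)

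The paper escapes this by weakening elementwise commutation of generators to commutation of the relevant \emph{measures} under convolution. Each generator is written as $g_i=z_ih_i$ with $z_i$ in the (normal, $H$-invariant) subgroup $Z^{FC}_G(H)$ and $h_i\in H$, and $z_i$ is replaced by its entire finite $H$-conjugacy class $C_i$, weighted uniformly; the resulting step distribution $P$ then satisfies $\mu*P=P*\mu$ for the measure $\mu$ on $H$ built from $\delta_e$ and the $h_i$, because conjugation by an element of $H$ merely permutes each $C_i$. In the example above this amounts to taking $S=\{a^{\pm1},b^{\pm1}\}$ and observing that $\delta_a+\delta_{a^{-1}}$ commutes with $\delta_b$ even though $a$ and $b$ do not. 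Your maximum-principle argument for extracting $\mu$-harmonicity goes through verbatim with this measure-level commutation, so only your construction of $S$ needs to be replaced; the remainder of your proof (Lemma \ref{tconcen-l} applied to $Z^{FC}_G(H)$, then Lemma \ref{tconmar-l} with $N^q(Z^{FC}_G(H))\supseteq\langle H,Z\rangle=G$) is correct as written.
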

\begin{proof}
Since Corollary \ref{tzinf1-c} deals with the case $H \cap Z$ is infinite, the proof reduces to the case where $C: = Z \cap H$ is finite.

The generating set $S$ which will come into the proof, is any generating can be written as $S = \big( \cup_i C_i \big) \cup \big( \cup_{j,k} D_j h_k \big)$ with $C_i, D_j \subset Z$ being $H$-conjugacy classes and $h_k \in H$. 
To see that such a generating set always exists, note that $Z \lhd G$. 
Hence generating set $S = \{g_i\}_{1 \leq i \leq n}$ can be written as $g_i = z_i h_i$ with $z_i \in Z$ and $h_i \in H$. 
As such $G$ is generated by $C_i$ and $h_i$ where $C_i$ is the $H$-conjugacy class of $z_i$ (in that case $D_j$ is reduced to the identity element).

For such a set, the random walk on the Cayley graph is given by convolution with $P = \sum_i p_i \delta_{z_i} + \big(\sum_{j} q_j \delta_{z_j}\big) \big( \sum_k r_k \delta_{h_k} \big)$ where the $p_i$, $q_j$ and $r_k$ are some positive real numbers (for the simple random walk $p_i = q_j^2 = r_k^2$, but here, it only matters that conjugating the elements of $Z$ under $H$ does not affect the coefficients).
Look at the associated random walk on $H$, given by convolution with $\mu = p \delta_{e} + q\sum_{n} r_n \delta_{h_n}$ where $p = \sum_{i} p_i$ and $q = \sum_{j} q_j$
Then 
\[
\begin{array}{r@{\;}c@{\;}l}
\mu  * P 
&=& p P
+ q  \big(\sum_{n} r_n \delta_{h_n} \big) \big(\sum_i p_i \delta_{z_i} \big)
+ q  \big(\sum_{n} r_n \delta_{h_n} \big) \big(\sum_{j} q_j \delta_{z_j}\big) \big( \sum_k r_k \delta_{h_k} \big) \\
&=& P p
+ \big(\sum_i p_i \delta_{z_i} \big) q \big(\sum_{n} r_n \delta_{h_n} \big) 
+ \big(\sum_{j} q_j \delta_{z_j}\big) \big(\sum_{n} r_n \delta_{h_n} \big) q \big( \sum_k r_k \delta_{h_k} \big)\\
&=& P p + \big(\sum_i p_i \delta_{z_i} \big) q \big(\sum_{n} r_n \delta_{h_n} \big) 
+ \big(\sum_{j} q_j \delta_{z_j}\big) \big(\sum_{k} r_k \delta_{h_k} \big) q \big( \sum_n r_n \delta_{h_n} \big)\\
&=& P *\mu,\\
\end{array}
\]
where the interchange in the second and third terms happen only because the whole conjugacy class of $z_i$ is in the sum. In the second to the third line only a change of indices occur.

From there, one can then consider a harmonic function $f$ on the Cayley graph of $G$ (with generating set $S$).
Then, by harmonicity of $f$ and then the fact that $P*\mu = \mu *P$,  
$
d = \langle f \mid \delta_g - \delta_g *\mu \rangle 
= \langle f \mid \delta_g *P^n - \delta_g * \mu *P^n \rangle 
= \langle f \mid \delta_g *P^n - \delta_g *P^n * \mu \rangle 
$.
There is an obviously bounded transport pattern $\tau_n$ from $P^n$ to $P^n *\mu$ (just do a random step in $H$).
Since $G$ is infinite, $P^n$ (and hence $\tau_n$) tend point-wise to 0. Consequently, $d =0$ which means that $f(g) = \sum_{h} f(gh) \mu(h)$.

This implies that $f$ is harmonic with respect to $\mu$, and $\mu$ generate $H$.
By Lemma \ref{tconcen-l}, $f$ is constant on $Z$.
By Lemma \ref{tconmar-l}, $f$ is constant on the malnormal hull of $Z$, which is $G$.
\end{proof}

The following result applies, among others, to Abelian-by-cyclic groups such as the soluble Baumslag-Solitar groups.

\begin{teo}\label{tc0abcy}
Let $G$ be a [finitely generated] group  which is an cyclic extension $1 \to H \to G \overset{\pi}\to \mathbb{Z} \to 1$.
If $Z^{FC}_H(H)$ is infinite, then any harmonic function on a Cayley graph of $G$ with $c_0$-gradient is constant.
\end{teo}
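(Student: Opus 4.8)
Write $A := Z^{FC}_H(H)$ for the (infinite, by hypothesis) FC-centre of $H$. Since $A$ is characteristic in $H$ and $H \lhd G$, we have $A \lhd G$; moreover $A = Z^{FC}_G(H) \cap H$, and, as $A$ is an FC-group, $A \subseteq Z^{FC}_G(A)$. The plan is the one already used for Theorems \ref{tinfcen-t} and \ref{tsubcent}: produce a measure concentrated on $H$ with respect to which $f$ is harmonic, deduce via Lemma \ref{tconcen-l} that $f$ is constant on the right cosets of $Z := Z^{FC}_G(H)$, and then propagate this constancy to all of $G$ using Lemma \ref{tconmar-l}.

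I would dispose of the propagation step first, since it is the routine (and reusable) part. Suppose we already know that $f$ is constant on the right cosets of $Z$. Taking $L = A$ in the $q$-normaliser property recalled before Lemma \ref{tconmar-l} (here $A \le Z$ is infinite and $A \lhd H$) shows that $N^q(Z)$, hence the almost-malnormal hull $M$ of $Z$, contains $H$. By Lemma \ref{tconmar-l}, $f$ is constant on $M$. Finally $M = G$: if some $g \notin M$, then $M \cap gMg^{-1} \supseteq H \cap gHg^{-1} = H$, which is infinite, contradicting almost-malnormality of $M$. Thus $f$ is constant.

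The crux is therefore to make $f$ harmonic for a walk that lives on $H$, and here the cyclic quotient enters decisively. Projecting the given simple random walk $P$ along $\pi\colon G \surj \zz$, symmetry of the generating set ($P(s) = P(s^{-1})$ and $\pi(s^{-1}) = -\pi(s)$) makes the projected walk a mean-zero, bounded-range walk on $\zz$, hence recurrent; so the $G$-walk returns to $H = \pi^{-1}(0)$ infinitely often almost surely. I would pass to the induced (traced) walk on $H$, whose step distribution $\nu$ is symmetric and supported on a set generating the infinite group $H$. The gain from tracing is that all steps now lie in $H$: for $z \in Z = Z^{FC}_G(H)$ the commutator set $[z,H]$ is finite, so $\delta_z$ commutes with $\nu$ up to finitely many values, and exactly as in Lemmas \ref{tcenco-l} and \ref{tconcen-l} there is a transport plan from $\delta_g * \nu^{*n}$ to $\delta_{gz} * \nu^{*n}$ that is bounded (by $\max_{h}|z\,c(h)|_S$) and tends point-wise to $0$ (as $H$ is infinite). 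Harmonicity of $f$ with respect to $\nu$ then forces $f(g) = f(gz)$, i.e. $f$ is constant on the right cosets of $Z$, which is exactly what the propagation step needs.

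The hard part is justifying that $f$ really is $\nu$-harmonic. This is immediate for bounded $f$ by optional stopping at the (a.s.\ finite) return time $T$ to $H$, but here $f$ is only assumed to have gradient in $c_0$ and may be unbounded; one must control the increments $|\nabla f|$ accumulated along an excursion above $H$. The $c_0$-condition is what saves this: the increments are uniformly small once the walk leaves a fixed finite set, and a long excursion necessarily travels far in $G$, so the stopped martingale $f(X_{n \wedge T})$ converges and optional stopping applies. Concretely I would run the whole argument with the finitely supported stopped operators $P_{A_m}$ and their exit distributions from \S\ref{sslapl1}, establishing the bounded transport plans inside each $A_m$ and letting $m \to \infty$. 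This transfer of the finite-$H$-conjugacy-class control across the recurrent direction $G/H = \zz$ is the real obstacle; note that the commuting-measure construction of Theorem \ref{tsubcent} cannot be invoked here, because for an arbitrary generating set conjugation by the $\zz$-direction sends an FC-element of $H$ to elements of unbounded word length.
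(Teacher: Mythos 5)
Your proposal is correct and follows essentially the same route as the paper: the paper also passes to the hitting distribution $\zeta$ of $H = \pi^{-1}(0)$ (via the walk stopped on the slabs $\pi^{-1}(\{-n,0,n\})$, with gambler's ruin giving the escaping part $\beta_n$ mass $\leq 1/n$ against a transport distance $\leq 2n+|c|$, hence a bounded, pointwise-vanishing transport plan that the $c_0$-gradient kills), then iterates $\zeta^k$ and commutes $\delta_c$ through it using the FC-condition, and finally propagates constancy from $Z^{FC}_H(H)$ to $G$ through the q-normaliser/almost-malnormal-hull machinery of Lemma \ref{tconmar-l}. The only difference is presentational: where you phrase the crux as making $f$ genuinely $\nu$-harmonic for the traced walk (and rightly flag the unboundedness of $f$ and of the excursions as the obstacle), the paper sidesteps this by never convolving $f$ with $\zeta$ alone, only pairing $\nabla f$ against differences $\zeta^k - \zeta^k * \delta_c$ that admit transport plans of $\ell^1$-norm $\leq |c|$.
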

\begin{proof}
Let $\mu$ denote the random walk on the Cayley graph and $\pi: G \surj \mathbb{Z}$ be the quotient map. Define $\mu_n$ to be the measure defined by firing up $\mu$ at all vertices $g$ such that $\pi(g) \neq 0$ or $|\pi(g)|>n$. In other words, let a random walker start at the identity, and he stops if he hits the set $\pi^{-1}(\{-n,0,n\})$. Then $\mu_n$ is the probability distribution of where he stopped. Write $\mu_n = \beta_n + \zeta_n$ where $\zeta_n$ is the part of $\mu_n$ supported on $\pi^{-1}(0) = H$. By the gambler's ruin, one has that $\zeta_n$ has $1-\tfrac{1}{n}$ of the mass.

Next let $f$ be a harmonic function. Consider $g \in G$ and, for simplicity, assume $c \in Z_H(H)$ (instead of $Z^{FC}_H(H)$). Then, since $f$ is also harmonic with respect to $\mu_n$,
\[
\begin{array}{rll}
f(g) - f(gc) 
&= \langle f | \delta_g *(1- \delta_c) \rangle
&= \langle f | \delta_g *(1- \delta_c) *\mu_n \rangle\\
&= \langle f | \delta_g *(\zeta_n- \delta_c *\zeta_n + \beta_n - \delta_c*\beta_n) \rangle.\\
&= \langle f | \delta_g *(\zeta_n- \zeta_n * \delta_c + \beta_n - \delta_c * \beta_n) \rangle.
\end{array}
\]
where the fact that $c$ commutes with $H$ is used (to commute $\delta_c$ with $\zeta_n$; which has support in $H$).
The transport pattern from $\beta_n$ to $\delta_c * \beta_n$ can use paths of length at most $2n+|c|$ and since the mass of $\beta_n$ is at most $\tfrac{1}{n}$, the $\ell^1$-norm of the transport remains bounded (and it tends to 0 point-wise). Hence $ f(g) - f(gc) = \langle f | \delta_g *(\zeta - \zeta * \delta_c) \rangle$
where $\zeta$ is the limit measure. 

Repeating this argument $k$ times again yield 
$ f(g) - f(gc) = \langle f | \delta_g *(\zeta^k - \zeta^k * \delta_c) \rangle$. 
The transport pattern from $\zeta^k$ to $\zeta^k * \delta_c$ is bounded by the word length of $c$ and tends point-wise to 0.
It follows that $f(g) - f(gc) = 0$.

Consequently, $f$ is constant on the cosets of $Z_H(H)$. Since $N^q( Z_H(H)) \supset H$ and $N^q(H) = G$, Lemma \ref{tconmar-l} shows that $f$ is constant on $G$.

The proof can be adapted to $Z^{FC}_H(H)$ just as in Lemma \ref{tconmar-l}.
\end{proof}
The previous result is similar to a result of Brieussel \& Zheng \cite[Theorem 1.1]{BZ}. In their result $G$ is a [locally normally finite]-by-cyclic group and they show that $G$ has trivial reduced cohomology in degree one for any weakly mixing representations (Shalom's property $H_{FD}$. Such a group $G$ necessarily fulfills the hypothesis of Theorem \ref{tc0abcy}, since a locally finitely normal group is FC-central. On the other hand, Theorem \ref{tc0abcy} only implies the triviality of the reduced cohomology in degree one for strongly mixing representations. As such it seems that both results are not comparable.

Using Theorem \ref{tc0abcy}, as well as \cite[Theorem 5.12]{Go-mixing}, one gets a result on the absence of non-constant harmonic function with gradient in $\ell^p$:
\begin{cor}\label{tlpmetab-c}
Any [finitely generated] metabelian group has $\hdp$ for any $1<p<\infty$.
\end{cor}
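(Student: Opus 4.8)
The plan is to deduce $\hdp$ from the stronger conclusion $\hdc$, using that $\ell^pE \subseteq c_0E$ for $1<p<\infty$ (so any harmonic function with gradient in $\ell^p$ automatically has gradient in $c_0$); it therefore suffices to establish $\hdc$ on every Cayley graph, and I will assume throughout that $G$ is infinite. Write $A := [G,G]$, an abelian normal subgroup with $Q := G/A$ finitely generated abelian. The first step is to dispose of the ``FC'' cases by Theorem \ref{tinfcen-t} and the remark following it: if $Z^{FC}_G(G)$ is infinite, then $\hdc$ holds. This already covers $A$ finite (then $[G,G]$ is finite, so every conjugacy class of $G$ has at most $|[G,G]|$ elements and $G$ is an FC-group) and $Q$ finite (then every $Q$-orbit on $A$ is finite, so $A \subseteq Z^{FC}_G(G)$).

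The remaining case is $Z^{FC}_G(G)$ finite, which forces both $A$ and $Q$ infinite and forces $\{a \in A : a \text{ has finite } Q\text{-orbit}\}$ to be finite. Hence I can choose $a \in A$ whose $Q$-orbit is infinite, and then, since the torsion part of $Q$ contributes only finite orbits, some infinite-order $\bar t \in Q$ already has an infinite $\langle \bar t\rangle$-orbit of $a$. Lifting $\bar t$ to $t \in G$, set $B := \langle t^n a t^{-n} : n \in \mathbb{Z}\rangle \le A$ and $K := \langle a, t\rangle$. Then $B$ is abelian and infinite, $B = K \cap A$, and $K = B \rtimes \langle t\rangle$ fits in $1 \to B \to K \to \mathbb{Z} \to 1$; crucially $K$ is finitely generated (by two elements). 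Since $B$ is abelian, $Z^{FC}_B(B) = B$ is infinite, so Theorem \ref{tc0abcy} applies to $K$ and shows $K$ has $\hdc$, hence $\hdp$.

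To pass from $K$ to $G$ I would use that $K$ is not almost-malnormal in $G$, and here the abelian-normal structure of $A$ does the work. If $B \subsetneq A$, pick $g \in A \setminus K$ (possible since $A \cap K = B$); because $A$ is abelian and $B,g \in A$, conjugation by $g$ fixes $B$, so $B \subseteq K \cap gKg^{-1}$ is infinite. If instead $B = A$ but $\langle \bar t \rangle \subsetneq Q$, pick $g$ with $\bar g \notin \langle \bar t\rangle$, so $g \notin K$ while $gAg^{-1} = A \subseteq K \cap gKg^{-1}$ is again infinite. In the only remaining situation $B = A$ and $\langle \bar t\rangle = Q$ one has $K = G$ and Theorem \ref{tc0abcy} applies to $G$ directly. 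In all cases $K$ has $\hdp$ and is not almost-malnormal in $G$, so \cite[Theorem 5.12]{Go-mixing} extends $\hdp$ to $G$.

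The main obstacle in this argument is arranging the auxiliary subgroup so that it is \emph{simultaneously} finitely generated, of the cyclic-extension-of-an-infinite-FC shape required by Theorem \ref{tc0abcy}, and not almost-malnormal. The naive choice $\langle A, t\rangle$ is normal (hence trivially not almost-malnormal) but need not be finitely generated when $A$ is an infinitely generated $\mathbb{Z}[Q]$-module; the point is that taking $K=\langle a,t\rangle$ for a \emph{single} $a$ keeps $K$ finitely generated, while the normality and commutativity of $A$ still force $K\cap gKg^{-1}$ to be infinite for a suitable $g\notin K$. Everything else is a routine case check on the two invariants $A$ and $Q$.
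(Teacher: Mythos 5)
Your overall architecture is the same as the paper's -- produce an abelian-by-cyclic subgroup to which Theorem \ref{tc0abcy} applies, then push the conclusion out to $G$ via \cite[Theorem 5.12]{Go-mixing} -- but your intermediate route is genuinely different and more self-contained. The paper splits into ``virtually nilpotent'' versus ``not'' and cites Groves/Breuillard to extract an abelian-by-cyclic subgroup $G_0$ of exponential growth; you split on the FC-centre (which is handled by Theorem \ref{tinfcen-t} together with the remark after it, or more directly by Corollary \ref{tzinf1-c} with $H=G$) and build the two-generated subgroup $K=\langle a,t\rangle$ by hand. That construction is sound: with $A=[G,G]$ and $Q=G/A$ both infinite and only finitely many $a\in A$ with finite $Q$-orbit, your choice of $a$ and of an infinite-order $\bar t$ does give $K=B\rtimes\langle t\rangle$ with $B=K\cap A$ infinite abelian, so Theorem \ref{tc0abcy} applies to $K$. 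This buys you independence from the Groves/Breuillard citation at the cost of a slightly longer elementary verification.

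The genuine gap is in the transfer step. What \cite[Theorem 5.12]{Go-mixing} propagates, as the paper actually uses it, is vanishing along iterated q-normalisers, i.e.\ to the \emph{almost-malnormal hull} of the subgroup; ``$K$ is not almost-malnormal'' only says this hull is strictly larger than $K$, and a non-almost-malnormal subgroup can perfectly well have a proper almost-malnormal hull (e.g.\ $\zz\times\{0\}$ inside $(\zz\times\zz)*\zz$ has hull $\zz\times\zz$), in which case nothing reaches $G$. The paper is careful on exactly this point: it checks $N^q_G(G_0)\supset\langle G_0,A_1\rangle$ and then that the q-normaliser of that contains $N^q_G(A_1)=G$. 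Your $K$ does in fact have hull equal to $G$, and the fix is two lines which supersede your case analysis: every $g\in A$ centralises $B=K\cap A$ (as $A$ is abelian), so $B\subseteq K\cap gKg^{-1}$ and $A\subseteq N^q_G(K)$, whence $\langle A,t\rangle\subseteq N^q_G(K)$; since $A$ is infinite and normal, $N^q_G(\langle A,t\rangle)\supseteq N^q_G(A)=G$. You should state this rather than the weaker non-almost-malnormality. A minor further caveat: the paper arranges for its subgroup to have exponential growth (hence $\IS_d$ for all $d$) before invoking the correspondence between $\hdp$ and reduced $\ell^p$-cohomology; your $K$ need not have exponential growth, so you should check that the direction of that correspondence you use (namely $\hdp$ for the subgroup implies vanishing of its reduced $\ell^p$-cohomology, and vanishing for $G$ implies $\hdq$ for $q<p$) holds without isoperimetric hypotheses, which is what \cite{Go} provides.
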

\begin{proof}
If $G$ is virtually nilpotent, then the result can be seen as a consequence of Theorem \ref{tinfcen-t} (but there are many possible earlier proofs).

Otherwise if $G$ is not virtually nilpotent, then $G$ contains an Abelian-by-cyclic subgroup $G_0$ of exponential growth (see, for example, Groves \cite{Gr} or Breuillard \cite[Proposition 4.1]{Br}). 
$G_0$ has exponential growth, hence it has $\IS_d$ for any $d$ (see \S{}\ref{ssisop}). Furthermore, by Theorem \ref{tc0abcy}, $G_0$ has $\hdc$, in particular is also has $\hdp$ for any $p \in [1,\infty[$.

Furthermore, if $1 \to A_1 \to G \to A_2 \to 1$ is decomposition of $G$ as an extension of $A_1$ by $A_2$, then note that $N^q(G_0) \supset \langle G_0, A_1 \rangle =: G_1$ (this follows from the fact that the ``Abelian'' subgroup of $G_0$ is infinite and commutes with $A_1$). It follows that $N^q(G_1) \supset N^q(A_1) = G$. So $G$ is the almost-malnormal hull of $G_0$.

By \cite[Theorem 5.12]{Go-mixing} (and the correspondence between $\ell^p$-cohomology and harmonic functions from \cite{Go} or \cite{Go-frontier}) reduced $\ell^p$-cohomology of $G$ is trivial for all $p \geq 1$, which in turn implies that $\hdp$ holds for all $p \geq 1$. 
\end{proof}
\cite[Question 6.2]{Go-mixing} asks whether $\hdp$ holds for any finitely generated soluble groups. 
The previous corollary gives a positive answer for case where the soluble rank is 2.

\begin{rmk}\label{rfinsup}
Let $G$ be a group which is generated by the finite set $S$. 
Let $\mu$ be a measure on $G$.
Given $\alpha >0$, a measure has a finite $\alpha$-moment if $\sum_{g \in G} \mu(g) |g|_S^\alpha < + \infty$ where $|g|_S$ denotes the word length of $g$ 
(\ie the distance between $g$ and the identity in the Cayley graph with respect to $S$).

Let $f$ be a function with $\ell^\infty$-gradient (\ie $f$ is Lipschitz). If $\mu$ has finite 1-moment, then 
$f * \mu (g) := \sum_{h \in G} f(gh) \mu(h)$ will be finite for any $g$.
Indeed, if $\pi_{g,gh}$ denotes a path from $g$ to $gh$ (of length $|h|_S$), one has  
\[
\begin{array}{rcl}
\sum_{h \in G} f(gh) \mu(h) 
&\leq& \displaystyle \sum_{h \in G} \big( f(g) + \sum_{k \in \pi_{g,gh}} \nabla f(k) \big) \mu(h) \\
&\leq& \displaystyle  \sum_{h \in G} \big( f(g) + |h|_S \|\nabla f\|_{\ell^\infty} \big) \mu(h) \\
&\leq& \displaystyle f(g) + \|\nabla f\|_{\ell^\infty} \sum_{h \in G} |h|_S \mu(h).
\end{array}
\]
Hence, for functions $f$ with $\ell^\infty$-gradient, one may consider them harmonic with respect to $\mu$ if $f*\mu = f$.

Note that it is also possible to speak of transport patterns between measures which are not finitely supported.
Indeed, given $f$ with gradient in $\ell^\infty$ as well as two measures with finite first moment $\mu$ and $\nu$, If there is is a $\tau \in \ell^1 E$ so that $\nabla^* \tau = \mu - \nu$, then the equality 
$\langle \nabla f | \tau \rangle =
\langle f | \nabla^* \tau \rangle$ holds (since $f$ is Lipschitz, see above).

This might be useful to prove $\hdc$ for groups.
Indeed, when a group $\Gamma$ has a measure $P$ whose support generate $\Gamma$ and another measure $\mu$ which commutes with $P$ and generates a smaller group, then the methods of this current subsection have allowed us to show that $\Gamma$ has $\hdc$. 
By considering measure which have finite 1-moment, instead of finitely supported, one could be able to prove this property for more groups.

Similar considerations can be made for functions with $\ell^p$-gradient. Indeed, as before
 \[
\begin{array}{rcl}
\sum_{h \in G} f(gh) \mu(h) 
&\leq& \displaystyle \sum_{h \in G} \big( f(g) + \sum_{k \in \pi_{g,gh}} \nabla f(k) \big) \mu(h) \\
\end{array}
\]
Since $\nabla f \in \ell^pE$, one has that $ \big\| \displaystyle \sum_{k \in \pi_{g,gh}} \nabla f(k) \big\|_{\ell^\infty} \leq \|\nabla f\|_{\ell^p} |h|_S^{1/p'}$. Hence the above sum makes sense for measures $\mu$ with finite $\tfrac{1}{p'}$-moment.

%
\end{rmk}

By the upcoming work of Amir, Gerasimova \& Kozma \cite{AGK}, it is however not possible to show $\hdc$ for any metabelian group. Indeed, they show that the lamplighter on $\zz^5$ contain such functions.


\subsection{Divergence and decay of the gradient}\label{ss-div}

The goal of this section is to mention a connection between the divergence (see \eg Ol'shanskii, Osin and Sapir \cite[Definition 1.5]{OOS}) and the decay of the gradient. For our purpose, let the divergence be defined as follows. 

Given some vertex $\mathfr{o}$ and an integer $K > 1$, consider $S_{\mathfr{o},K}(n)$ to be vertices which are in the complement of the ball of radius $n$ around $\mathfr{o}$ but not in the infinite components of the complement of the ball of radius $Kn$. Let $d_S$ be the combinatorial distance on the finite graph induced on $S_{\mathfr{o},K}(n)$ and $S^{out}$ to be the set of vertices which in $S_{\mathfr{o},K}(n)$ and are neighbours to a vertex at distance $Kn+1$ von $\mathfr{o}$.
Then the divergence is $D_{\mathfr{o},K}(n) = \max \{ d_S(x,y) \mid x,y \in S^{out}(n)\}$.

There are many groups where this divergence is a linear function (for any $K$ large enough) such as soluble groups, uniformly amenable groups and groups with a non-trivial law (see Ol'shanskii, Osin and Sapir \cite[\S{}1.3]{OOS}).

Given a function $f$ on the vertices of the graph $G$, let $\mathrm{gd}_{f}(n) = \| \nabla f \|_{\ell^\infty ( E \cap B_{\mathfr{o}}(n)^{\mathsf{c}\infty})}$ be the supremum on edges of the gradient which are in an infinite component of the complement of the ball of radius $n$. 
\begin{cor}
If $h$ is a non-constant harmonic function, then $\displaystyle \lim_{n \to \infty} D_{\mathfr{o},K}(n) \cdot \mathrm{gd}_h (n) \neq 0$. 
\end{cor}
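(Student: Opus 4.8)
The plan is to produce a single constant $\delta_0>0$ with $D_{\mathfr{o},K}(n)\cdot\mathrm{gd}_h(n)\geq\delta_0$ for all large $n$; since this bound is uniform, $\liminf_{n}D_{\mathfr{o},K}(n)\cdot\mathrm{gd}_h(n)\geq\delta_0>0$, which is exactly the assertion. I would obtain $\delta_0$ by squeezing the oscillation of $h$ on the outer set $S^{out}(n)$ from two sides: harmonicity forces it to be large, while summing $\nabla h$ along a path inside $S_{\mathfr{o},K}(n)$ bounds it by $D_{\mathfr{o},K}(n)\cdot\mathrm{gd}_h(n)$.

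For the upper bound, first I would fix $x,y\in S^{out}(n)$ and a geodesic $P$ between them in the graph induced on $S_{\mathfr{o},K}(n)$, so $P$ has length $d_S(x,y)\leq D_{\mathfr{o},K}(n)$. Then $|h(x)-h(y)|=\big|\sum_{e\in P}\nabla h(e)\big|\leq d_S(x,y)\,\max_{e\in P}|\nabla h(e)|$. Every edge of $P$ lies in $S_{\mathfr{o},K}(n)$, hence outside $B_{\mathfr{o}}(n)$ and, for a one-ended graph, inside the infinite component of $B_{\mathfr{o}}(n)^{\comp}$, so $\max_{e\in P}|\nabla h(e)|\leq\mathrm{gd}_h(n)$. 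This gives $\max_{u,v\in S^{out}(n)}|h(u)-h(v)|\leq D_{\mathfr{o},K}(n)\cdot\mathrm{gd}_h(n)$.

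For the lower bound I would use the maximum principle. Choose $a,b$ with $\delta_0:=|h(a)-h(b)|>0$, which exists because $h$ is non-constant. Let $W_n$ be the infinite component of $B_{\mathfr{o}}(Kn)^{\comp}$ and $F_n:=V\setminus W_n$; this is a finite connected set containing $a,b$ for $n$ large, and a short check shows that every vertex of $F_n$ adjacent to $W_n$ lies in $S^{out}(n)$, i.e. the inner boundary of $F_n$ is contained in $S^{out}(n)\subseteq F_n$. Since $h$ is harmonic at every vertex, the maximum principle on the finite set $F_n$ says its maximum and minimum are attained on the inner boundary, whence $\max_{S^{out}(n)}h=\max_{F_n}h\geq h(a)$ and $\min_{S^{out}(n)}h=\min_{F_n}h\leq h(b)$ (up to swapping $a,b$). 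Thus the oscillation of $h$ over $S^{out}(n)$ is at least $\delta_0$, and combining with the previous paragraph finishes the proof.

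The hard part will be the bookkeeping hidden in the one-endedness remarks: checking that the path $P$ indeed stays in the infinite component of $B_{\mathfr{o}}(n)^{\comp}$ (so that its edges are controlled by $\mathrm{gd}_h$) and that the inner boundary of $F_n$ is captured by $S^{out}(n)$. Both are immediate when $B_{\mathfr{o}}(Kn)^{\comp}$ has a single infinite component and no awkward finite pockets — which is the regime where $D_{\mathfr{o},K}$ is finite and the notion of divergence is meaningful anyway — so I would either assume one end or, in general, pass to a fixed infinite component and keep track of the finite pockets of $B_{\mathfr{o}}(Kn)^{\comp}$, a routine but tedious adjustment.
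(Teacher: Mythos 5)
Your argument is correct, but it takes a different route from the paper's. The paper fixes $x,y$ with $h(x)\neq h(y)$, uses harmonicity to write $h(x)-h(y)=\pgen{\mr{ex}_x^{S^{out}(n)}-\mr{ex}_y^{S^{out}(n)}\mid h}$, and then bounds this by a transport plan $\tau$ between the two exit distributions supported in the annulus, with $\|\tau\|_{\ell^1}\leq D_{\mathfr{o},K}(n)\,\|\mr{ex}_x^{S^{out}(n)}-\mr{ex}_y^{S^{out}(n)}\|_{\ell^1}\leq 2D_{\mathfr{o},K}(n)$, so that $|h(x)-h(y)|=|\pgen{\tau\mid\nabla h}|\leq 2D_{\mathfr{o},K}(n)\cdot\mathrm{gd}_h(n)$. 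You replace both halves: the discrete maximum principle on the finite set $F_n=V\setminus W_n$ pushes the oscillation $|h(a)-h(b)|$ out to the shell, and a single geodesic in $S_{\mathfr{o},K}(n)$ replaces the superposition of paths that constitutes $\tau$. (For the inner-boundary form of the maximum principle you invoke, note that the level set of $\max_{F_n}h$, if it met no vertex adjacent to $W_n$, would be closed under taking neighbours and hence all of $V$ by connectedness; this is the small argument hiding behind ``a short check''.) Your approach is more elementary --- no exit distributions, no transport plans --- and your worry about finite pockets dissolves without any one-endedness hypothesis: the extremal vertices live on the inner boundary of $F_n$, hence are adjacent to the infinite connected set $W_n\subseteq B_{\mathfr{o}}(n)^\comp$, so they and any path joining them inside $S_{\mathfr{o},K}(n)\subseteq B_{\mathfr{o}}(n)^\comp$ all lie in a single \emph{infinite} component of $B_{\mathfr{o}}(n)^\comp$ and are therefore controlled by $\mathrm{gd}_h(n)$. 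What the paper's formulation buys in exchange is the refinement recorded right after its proof: since its bound carries the factor $\|\mr{ex}_x^{S^{out}(n)}-\mr{ex}_y^{S^{out}(n)}\|_{\ell^1}$, on a Liouville graph this factor tends to $0$ and one gets $D_{\mathfr{o},K}(n)\cdot\mathrm{gd}_h(n)\to\infty$; a single unweighted geodesic cannot see that improvement.
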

In particular, if $h$ is a non-constant harmonic function on a soluble group then the gradient of $h$ has to decay as slow as $1/n$.
\begin{proof}
Consider two vertices $x$ and $y$ so that $h(x) \neq h(y)$. 
Then, if $n$ is large enough so that $x,y \in B_{\mathfr{o}}(Kn)$, $h(x) - h(y) = \langle \mr{ex}_x^{S^{out}(n)} - \mr{ex}_y^{S^{out}(n)} \mid h \rangle $.
Now let $\tau$ be a transport pattern from $\mr{ex}_x^{S^{out}(n)}$ to $\mr{ex}_y^{S^{out}(n)}$ which avoids the ball of radius $n$.
For this transport pattern, one has an upper bound $\|\tau\|_{\ell^1E} \leq D_{\mf{o},K}(n) \cdot \|\mr{ex}_x^{S^{out}(n)} - \mr{ex}_y^{S^{out}(n)}\|_{\ell^1 X} \leq 2 D_{\mf{o},K}(n)$.
From $h(x) - h(y) = \langle \nabla^*\tau \mid h \rangle = \langle \tau \mid \nabla h \rangle$ and the fact that $\tau$ is supported in $S_{\mf{o},K}(n)$ one gets that 
$h(x) - h(y) \leq \|\tau\|_{\ell^1E} \| \nabla h\|_{\ell^\infty ( E \cap B_{\mathfr{o}}(n)^{\mathsf{c}\infty})} \leq 2 D_{\mathfr{o},K}(n) \cdot \mathrm{gd}_h (n)$.
Consequently the right-hand side is bounded away from 0.
\end{proof}
Note that if the graph is Liouville, then one gets a small improvement since the $\ell^1$-norm of the difference of exit distributions tends to 0.
It would be interesting to apply this result to prove the absence of bounded harmonic functions with gradient in $\ell^p$.

\section{Questions }\label{sques}

%

%

Regarding the first question, 
Here is an example of a graph with infinitely many ends where $\mb{K}_1 = \srl{\mb{k}_1}^*$. 
Consider the half line and 
attach to every vertex another half-line (the vertices of these attached half-line will be labelled by $\nn$). 
It's not too difficult to show that the Dirac mass (in $\ell^1E$) of any edge belongs to $\srl{\mb{k}_1}^*$. 
Indeed, if the edge $(x-1,x)$ belongs to an attached half-line then consider $\nabla f_n$ where $f_n$ is the function supported on $(x, x+n)$.
This $f_n$ tends weak$^*$ to the desired Dirac mass.
Dirac masses which are on the original half-line can then easily be obtained.

Here is an example where $\mb{f}_c \neq \mb{F}_c$. 
Consider an infinite tree without vertices of degree $\leq 2$.
Recall that in a tree $\mb{f}_c$ is trivial.
Hence it suffices to construct an element of $\mb{F}_c$.
Let $\mf{o}_\pm$ be two extremities of some edge. 
One removes this edge and gets two rooted trees $T_\pm$ (with roots $\mf{o}_\pm$).
Draw those trees in a graded fashion (say with the root $\mf{o}_+$ at the top of its tree and the root $\mf{o}_-$ at the bottom of its tree).
Define the value of the function $f$ (on the edges) by setting $f(\vec{e})$ to be $1/ \prod (d_i-1)$ where $d_i$ are the degrees of the vertices between $\vec{e}$ and the root. 
The sign of $f$ should be positive when going down.
Now put back the edge between $\mf{o}_+$ and $\mf{o}_-$ and set the value of $f$ on this edge to be $1$.
It is fairly obvious to see that this function is a flow and tends to $0$ at infinity (\ie belongs to $\mb{F}_c$).

Assume $f \in \ell^1E \setminus \srl{ \mb{K}_1 + \mb{F}_1}^*$. Look at elements $g$ of minimal norm inside $f + \srl{ \mb{K}_1 + \mb{F}_1}^*$. Each $g$ can be used as a weight on the edges and to create a length-metric space $(G,m_g)$ where $m_g(x,y) = \inff{\pi:x \to y} \sum_{\vec{e} \in \pi} |g(\vec{e})|$ where the infimum runs over all paths from $x$ to $y$ and the sum over all edges in the path. Let $\srl{G}^g$ be the completion of $G$ with respect to the metric $m_g$. 

\begin{ques}
Are any of the $\srl{G}^g \setminus G$ Floyd boundaries?
\end{ques}

Given a finite presentation of a group, there is a natural space of cycles as well as an operator $\nabla_2: \ell^pC \to \ell^pE$. 
The fact that $\nabla_2$ has closed image for $p=1$ seems to be related to hyperbolicity. 
A more general question would be:
\begin{ques}\label{qnabla2}
Does the fact that $\nabla_2$ has closed image in $\ell^2$ is related to a group-theoretic property?
\end{ques}

For Cayley graphs, the property $\hdp$ (absence of non-constant harmonic functions with gradient in $\ell^p$) is very close to be invariant under quasi-isometry. 
Indeed, in \cite{Go}, it is shown that if a graph has $\hdp$ then its reduced $\ell^p$-cohomology is trivial and that this in turn implies $\hdq$ for any $q<p$.
Since the triviality of the reduced $\ell^p$-cohomology is an invariant of quasi-isometry, one gets that, for any fixed $p$, ``having $\hdq$ for any $q<p$'' is also an invariant of quasi-isometry. 
However it remains open to check that:
\begin{ques}\label{qisom}
Is $\hdp$ an invariant of quasi-isometry (for Cayley graphs)?
\end{ques}
In the case of generic graphs, some trouble could come up in the class of graphs which do not have $\IS_d$ for some $d$. 
The question could also be asked for graphs which have $\IS_d$ for all $d$ (instead of Cayley graphs). The quasi-isometry invariance of $\hdc$ (absence of non-constant harmonic functions with gradient in $c_0$) is also open. In this case the following is also unclear:
\begin{ques}\label{qbdd}
Is $\hdc$ equivalent to the absence of non-constant \emph{bounded} harmonic function with gradient in $c_0$?
\end{ques}
In some sense, the quasi-isometry invariance could be as hard as the quasi-isometry invariance of the Liouville property. However, note that, contrary to the space of bounded harmonic functions, the space of harmonic functions with gradient in $\ell^p$ (resp. $c_0$) is closed in $\ell^pE$ (resp. $c_0E$).

In \cite{ER}, Elder \& Rogers investigate how to recover the amenable radical using some properties on random walks.
Let $H \subset G$ be the set of all elements $h$ so that 
\[
\| \mr{ex}^{A_n}_h - \mr{ex}^{A_n}_\mf{o} \|_{\ell^1} \to 0
\]
The triangle inequality combined with group invariance shows this is actually a subgroup of $G$.
This may not be the amenable radical simply because it will not equal $G$ in any group which is not Liouville.
For example, on the lamplighter on $\zz^3$, $G = C_2 \wr \zz^3$, $H \lneq G$ (it seems that $H = \{1\}$).
Also, if $G = G_1 \times G_2$ where $G_1$ is Liouville and $G_2$ is not, then $H=G_1$.

\begin{ques}
What are the properties of this subgroup $H<G$? 
\end{ques}

In a Cayley graph, a special case [restricting to degree $1$] of a question (dating back at least to Gromov \cite[\S{}8.$A_1$.$A_2$, p.226]{Gro}) can be formulated (\emph{via} the results of \cite{Go} and Lemma \ref{tannilp-l}) as 
\begin{ques}
Is it true that $\mb{f}_p + \mb{K}_p$ is dense in $\ell^pE$ for the Cayley graph of any amenable group and all $p \in ]1,2[$?
\end{ques}

The main objective in proof of Theorem \ref{thdp-t}, is that one notes immediately that only part in the proof where we use the Liouville condition is to show $\tau_k \overset{p.-w.}\rightharpoonup 0$.
To develop this further, say that a transport pattern which only uses geodesic will be called \textbf{optimal}.
A transport pattern is $K$-optimal (for some $K \geq 1$) if the mass is transported by at most $K$ times the minimal length (the distance it would travel in the optimal transport).
Note that if $\tau''$ is $K$-optimal and $\tau'$ is optimal, then $\|\tau''\|_{\ell^1} \leq K \|\tau'\|_{\ell^1}$.

Let $\mu_k = P^k \delta_\mf{o} - P^k \delta_s$. 
Take a sequence $\tau_k$ such $\nabla^* \tau_k = \mu_k$ so that $\tau_k \overset{p.-w.}\rightharpoonup 0$. 
Let $K_k$ be the smallest real number such that $\tau_k$ is $k$-optimal.
For each sequence the function $k \mapsto K_k$ gives some kind of divergence as measured by the random walk.

\begin{ques}
Are there groups for which the divergence is linear, but there is a sequence $\tau_k$ as above so that $k \mapsto K_k$ is bounded?
\end{ques}
Although the existence of such a sequence is sufficient for $\hdp$, it might not be necessary. Still, this gives a good hint at which amenable groups might not have $\hdp$.
\begin{ques}
Is there an amenable lacunary hyperbolic group with $\hdp$? 
\end{ques}
More precisely, does the group introduced in Ol'shanskii, Osin \& Sapir \cite[\S{}3.5]{OOS} has $\hdp$? Note that it is not known whether this group has the Liouville property (no non-constant bounded functions).

\end{document}